\numberwithin{equation}{section}
\newtheorem{theorem}{Theorem}[section]
\newtheorem{proposition}[theorem]{Proposition}
\newtheorem{lemma}[theorem]{Lemma}
\newtheorem{corollary}[theorem]{Corollary}
\newtheorem{claim}{Claim}[theorem]
\newtheorem{problem}{Problem}[section]
\theoremstyle{definition}
\newtheorem{definition}[theorem]{Definition}
\newcommand{\mc}[1]{\mathcal{#1}}
\newcommand{\mbb}[1]{\mathbb{#1}}
\newcommand{\mf}[1]{\mathfrak{#1}}
\newcommand{\setm}{\setminus}
\newcommand{\empt}{\emptyset}
\newcommand{\subs}{\subset}
\newcommand{\dom}{\operatorname{dom}}
\def\<{\left\langle}
\def\>{\right\rangle}
\def\br#1;#2;{\bigl[ {#1} \bigr]^ {#2} }
\newcommand{\rcp}{\operatorname{RC}^+}
\newcommand{\clp}{\operatorname{\mc F}^+}
\newcommand{\MAC}{{\rm MA(\mbox{\it countable})}}
\newcommand{\Pjuh}[1]{\mc J_{#1}}
\newcommand{\pjuh}[1]{J_{#1}}
\author[I. Juh\'asz]{Istv\'an Juh\'asz}
\address
      { Alfréd Rényi Institute of Mathematics, Hungarian Academy of Sciences
}
\email{juhasz@renyi.hu}
\author[L. Soukup]{Lajos Soukup}
\address
      { Alfréd Rényi Institute of Mathematics, Hungarian Academy of Sciences}
\email{soukup@renyi.hu}
\author[Z. Szentmikl\'ossy]{Zolt\'an Szentmikl\'ossy}
\address{E\"otv\"os University of Budapest}
\email{szentmiklossyz@gmail.com
}
\title
   {Anti-Urysohn spaces}
\subjclass[2010]{54A25, 54A35, 54D10, 03E04}
\keywords{}
\date{\today}
\thanks{The research on and preparation of this paper was
supported by  OTKA grant no. K 113047.}
\begin{document}

\begin{abstract}
All spaces are assumed to be infinite Hausdorff spaces.
We call a space  {\em anti-Urysohn $($AU in short$)$} iff
any two non-emty regular closed sets in it intersect.
We prove that
\begin{itemize}
\item for every infinite cardinal ${\kappa}$
there is a space of  size ${\kappa}$ in which
fewer than $cf({\kappa})$ many non-empty regular closed sets always intersect;
\item there is  a locally countable
AU space of size $\kappa$ iff $\omega \le \kappa \le 2^{\mf c}$.
\end{itemize}

\smallskip

A  space with at least two non-isolated points is called {\em strongly anti-Urysohn $($SAU in short$)$}
iff  any two infinite  closed sets in it intersect.
We prove that
\begin{itemize}
\item if $X$ is any SAU space then
$ \mf s\le |X|\le 2^{2^{\mf c}}$;

\item if $\mf r=\mf c$ then there is a
separable, crowded, locally countable,  SAU space of
cardinality $\mf c$;
\item if $\lambda > \omega$ Cohen reals are added to any ground model
then in the extension there are SAU spaces of size $\kappa$ for
all $\kappa \in [\omega_1,\lambda]$;
\item if GCH holds and  $\kappa \le\lambda$ are uncountable regular
cardinals then in some CCC generic extension we have
$\mf s={\kappa}$, $\,\mf c={\lambda}$, and for every cardinal
${\mu}\in [\mf s, \mf c]$ there is an   SAU space of cardinality ${\mu}$.
\end{itemize}

The questions if SAU spaces exist in ZFC or if SAU spaces of cardinality $> \mf c$ 
can exist remain open.

\end{abstract}

\maketitle

\section{Introduction}

In this paper ``space'' means ``infinite Hausdorff topological space''.

The space $X$ is called {\em anti-Urysohn}   ({\em AU}, in short) iff
$A\cap B\ne \empt$ for any $A,B\in \rcp(X)$, where $\rcp(X)$ denotes the family of non-empty
regular closed sets in $X$.

We call the space $X$ {\em strongly anti-Urysohn} ({\em SAU}, in short)
iff  $|X'|>1$, that is $X$ has at least two non-isolated points,
and $A\cap B\ne \empt$ for any $A,B\in \ \clp(X)$, where
$\clp(X)$ denotes the family of {\em infinite} closed subsets of $X$.
Clearly, AU spaces are crowded and a crowded SAU space is AU. Our original intention was to include
crowdedness in the definition of SAU spaces. However, we changed our minds after
we realized that it seems to be just as hard to construct them with the weaker property of having
at least two non-isolated points.

What led us to consider AU spaces was not just idle curiosity. Co-operating via correspondence with Alan Dow,
we have recently arrived at the result that in the Cohen model any separable and sequentially compact
Urysohn space has cardinality $\le \mathfrak{c}$. (This result will be published elsewhere.)
The natural question if this holds for all (Hausdorff) spaces, however, remained open.
When trying to find a ZFC counterexample, it was natural to look for spaces that are
as much non-Urysohn as possible.

Actually, a countable AU space, under a different name, had been constructed
by W. Gustin in \cite{Gu} a long time ago, as a simple(r) example
of a countable connected Hausdorff space. (It is obvious that AU spaces are connected.)
The first example of a countable connected Hausdorff space was constructed by
Urysohn in \cite{Ur}, but his construction is
extremely long and complicated: just the description of his space takes up three pages.
(We have no idea if Urysohn's example is AU or not.)
A much simpler example was obtained by
Gustin in \cite{Gu} where the following was proved:

\cite[Theorem 4.2]{Gu}
There is a countably infinite Hausdorff space $X$
such that no two distinct points in $X$ have disjoint closed neighbourhoods (i.e.
$X$ is  AU).

An even simpler construction of
a countable connected Hausdorff space, which takes up only one page, was published by
Bing in  \cite{Bi}. This is also presented as example 6.1.6 in Engelking's book \cite{En}.
Bing's example also turns out to be AU.

In contrast to this, we are not aware of any earlier appearance of SAU spaces. In fact,
we admit that when we first considered them we did not think that they could exist.

Our notation and terminology is standard. In set theory we follow \cite{Ku} and
in topology \cite{En}.

\section{Existence of anti-Urysohn spaces}

In this section we show that for every infinite cardinal $\kappa$ there
is an AU space of cardinality $\kappa$. Actually, we prove much more that is
new and of interest even for the case $\kappa = \omega$.

To do that we need the following somewhat technical lemma that provides a general method for
constructing AU spaces.

\begin{lemma}\label{lm:large_SA}
Assume that ${\kappa}$ is an infinite cardinal and $X$ is a space with
$X\cap {\kappa}=\empt$, moreover
$\{K_{\alpha}:{\alpha}<{\kappa}\}$ are pairwise
disjoint non-empty compact subsets of $X$ such that
\begin{enumerate}[(1)]
 \item if $\,a\subs {\kappa}$ is cofinal then
$\,\bigcup\nolimits_{{\alpha}\in a}K_{\alpha}$ is dense in $X$;

\smallskip

\item $Y=X\setm \bigcup\nolimits_{{\alpha} < \kappa}K_{\alpha}$ is also dense in $X$.
\end{enumerate}
\smallskip
Define the topology $\varrho$ on $Z = Y\cup {\kappa}$  as follows:
\begin{itemize}
\item for  $y \in Y$ the family $\{U \cap Y : y \in U \in \tau(X) \}$,
\smallskip
\item for ${\alpha}\in {\kappa}$ the family
\begin{align*}
\Big\{\{{\alpha}\}\cup (W\cap Y) : K_{\alpha} \subs W \in \tau(X) \Big\}
\end{align*}
\end{itemize}
is a $\varrho$-neighbourhood base. Then
\begin{enumerate}[(i)]
\item $\varrho$ is a Hausdorff topology on $Z$,
\item if  $V \in \varrho$ is non-empty then  $\,\overline{V}^\varrho$
includes a final segment of ${\kappa}$.
\end{enumerate}
\end{lemma}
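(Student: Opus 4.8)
The plan is to first check that $\varrho$ is a well-defined topology by verifying the neighbourhood-base axioms, then to establish (i) by separating points in three cases, and finally to derive (ii) from the density hypothesis (1). For the topology, the only delicate axiom is that whenever $z'$ lies in a basic neighbourhood $N$ of $z$, some basic neighbourhood of $z'$ is contained in $N$. Since $Y\subseteq X$ and $X\cap\kappa=\emptyset$ give $Y\cap\kappa=\emptyset$, a basic neighbourhood $\{\alpha\}\cup(W\cap Y)$ of an ordinal point contains no other ordinal, and a basic neighbourhood $U\cap Y$ of a point of $Y$ is contained in $Y$; hence in every case $z'$ falls in the ``$Y$-part'' $U\cap Y$ or $W\cap Y$, and the subspace topology on $Y$ together with openness of $U,W$ in $X$ supplies the required smaller neighbourhood. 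Closure under finite intersection at a fixed point reduces to the fact that $U_1\cap U_2$, respectively $W_1\cap W_2$, is still open in $X$ and still contains the point, respectively $K_\alpha$.

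For (i) I would separate two distinct points of $Z$ in three cases. Two points of $Y$ are separated by intersecting disjoint $X$-open sets with $Y$. For $y\in Y$ and $\alpha\in\kappa$ we have $y\notin K_\alpha$, so, $K_\alpha$ being compact and $X$ Hausdorff, there are disjoint open $U\ni y$ and $W\supseteq K_\alpha$; then $U\cap Y$ and $\{\alpha\}\cup(W\cap Y)$ are disjoint. For distinct $\alpha,\beta\in\kappa$ the compact sets $K_\alpha,K_\beta$ are disjoint, so they have disjoint open neighbourhoods $W_\alpha,W_\beta$ in $X$, giving disjoint $\varrho$-neighbourhoods $\{\alpha\}\cup(W_\alpha\cap Y)$ and $\{\beta\}\cup(W_\beta\cap Y)$. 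In each case disjointness follows because the $Y$-traces are disjoint while the ordinal points lie outside $Y$; compactness of the $K_\alpha$ is precisely what is used.

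For (ii), let $V\in\varrho$ be non-empty. Its trace $V\cap Y$ is open in the subspace $Y$, so there is an open $G\subseteq X$ with $G\cap Y=V\cap Y$, and $G\ne\emptyset$ because $V$ contains a basic neighbourhood and $Y$ is dense in $X$ by (2). The key computation concerns an ordinal $\alpha\notin V$: a basic neighbourhood $\{\alpha\}\cup(W\cap Y)$ meets $V$ iff $(W\cap G)\cap Y\ne\emptyset$, which by density of $Y$ holds iff $W\cap G\ne\emptyset$; hence $\alpha\in\overline{V}^\varrho$ iff every open $W\supseteq K_\alpha$ meets $G$, and by compactness of $K_\alpha$ this \emph{fails} exactly when $K_\alpha\cap\overline{G}=\emptyset$ (closure in $X$), in which case $K_\alpha\cap G=\emptyset$ a fortiori. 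Now suppose for contradiction that $a=\{\alpha<\kappa:\alpha\notin\overline{V}^\varrho\}$ is cofinal in $\kappa$. Then $G$ is disjoint from $\bigcup_{\alpha\in a}K_\alpha$, which contradicts (1), since that union is dense in $X$ while $G$ is a non-empty open set. So $a$ is bounded, whence $\overline{V}^\varrho$ contains a final segment of $\kappa$. I expect the main obstacle to be exactly this closure bookkeeping at ordinal points—translating ``$\alpha\in\overline{V}^\varrho$'' into the $X$-side statement about $K_\alpha$ and $G$—after which hypothesis (1) closes the argument at once.
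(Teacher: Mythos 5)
Your proposal is correct and follows essentially the same route as the paper's proof: Hausdorffness comes from disjoint compact sets (including singletons) having disjoint open neighbourhoods in $X$, and (ii) is obtained by translating ``$\alpha\in\overline{V}^\varrho$'' into an $X$-side condition on $K_\alpha$ via the density of $Y$ (hypothesis (2)) and then applying hypothesis (1) to conclude that the set of ordinals whose $K_\alpha$ misses the relevant open set cannot be cofinal; your contradiction argument is just the contrapositive of the paper's direct claim that $\{\alpha : K_\alpha\cap U\ne\emptyset\}$ contains a final segment. One cosmetic remark: the equivalence ``$\alpha\notin\overline{V}^\varrho$ iff $K_\alpha\cap\overline{G}=\emptyset$'' does not actually use compactness (take $W=X\setminus\overline{G}$); compactness is only needed for the Hausdorff part.
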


\begin{proof}[Proof of Lemma \ref{lm:large_SA}]
It is straight forward to see that the above definition of $\varrho$ is correct.
That $\varrho$ is Hausdorff follows from the fact that any two disjoint
compact sets have disjoint neighbourhoods in $X$.

It is obvious from the definition that the subspace topology of $Y$ inherited from $X$ is
the same as $\varrho \upharpoonright Y$, moreover $Y$ is dense open in $\langle Z,\varrho \rangle$.
Thus it suffices to show (ii) for $V \in \varrho$ with $V \subs Y$.
But every $\varrho$-open set $V \subs Y$ is of the form $U \cap Y $ with $U \in \tau(X)$.
Now, if $V$ is non-empty then (1) implies that
\begin{align*}
 I=\{{\alpha}<{\kappa}:  K_{\alpha}\cap U \ne \empt\}
\end{align*}
contains a final segment of ${\kappa}$. But if ${\alpha} \in I$ and
$K_\alpha \subs W \in \tau(X)$ then $W \cap U \supset K_{\alpha}\cap U \ne \empt$, hence
$W \cap V = (W \cap Y) \cap V = W \cap U \cap Y \ne \empt$ as well  because, by (2),
$Y$ is dense in $X$. Thus we have ${\alpha}\in \overline {V}^\varrho$ for all $\alpha \in I$,
which completes the proof of (ii).
\end{proof}

Now we shall present two applications of lemma \ref{lm:large_SA}.

\begin{theorem}\label{tm:any}
For any infinite cardinal ${\kappa}$ there is a(n AU) space $Z$
such that
$|Z|={\kappa}$, $\,d(Z) = \log \kappa$,  and
\begin{align*}
\text{$\bigcap \mc A\ne \empt$ whenever
$\mc A\in \br \rcp(X);<cf({\kappa});$.}
\end{align*}
\end{theorem}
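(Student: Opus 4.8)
The plan is to derive the whole statement from clause~(ii) of Lemma~\ref{lm:large_SA}, so that the entire burden falls on producing a suitable input. First I record the reduction. Suppose $\langle Z,\varrho\rangle$ has already been obtained from the lemma, and let $A\subs Z$ be a non-empty regular closed set. Put $U=\operatorname{int}A$; then $A=\overline{U}^\varrho$, and $A\ne\empt$ forces $U\ne\empt$, so (ii) yields some $\gamma_A<\kappa$ with $[\gamma_A,\kappa)\subs\overline{U}^\varrho=A$. Hence every non-empty regular closed set contains a final segment of $\kappa$. Now if $\mc A\in\br \rcp(Z);<cf(\kappa);$, then $\gamma:=\sup\{\gamma_A:A\in\mc A\}<\kappa$ because $|\mc A|<cf(\kappa)$, and therefore $[\gamma,\kappa)\subs\bigcap\mc A$, so $\bigcap\mc A\ne\empt$. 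The case $|\mc A|=2$ gives in particular that $Z$ is AU.

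It remains to feed the lemma an $X$ and a family $\{K_\alpha:\alpha<\kappa\}$ with the right invariants. I would work inside the Cantor cube $2^\kappa$, whose density equals $\log\kappa$. Fix a dense set $D\subs 2^\kappa$ with $|D|=\log\kappa$, and seek $\kappa$ pairwise disjoint non-empty compacta $K_\alpha\subs 2^\kappa\setm D$ that \emph{lower-converge} to $2^\kappa$, that is: for every basic clopen $B$ one has $K_\alpha\cap B\ne\empt$ for all sufficiently large $\alpha$; this is exactly condition~(1). Setting $X=D\cup\bigcup_{\alpha<\kappa}K_\alpha$ then makes $Y=X\setm\bigcup_\alpha K_\alpha=D$ dense (condition~(2)), and since the $K_\alpha$ already render $X$ dense in $2^\kappa$ we get $d(X)=\log\kappa$, whence $d(Z)=d(Y)=\log\kappa$. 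Finally $|Z|=|Y|+\kappa=\kappa$ will hold provided $\sum_{\alpha<\kappa}|K_\alpha|\le\kappa$.

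Thus the task reduces to constructing $\kappa$ disjoint compacta of total size $\le\kappa$ that lower-converge to $2^\kappa$, and this is where I expect the real difficulty to lie. The transparent choice $K_\alpha=\{x:x(\alpha)=1\text{ and }x(\beta)=0\text{ for }\beta>\alpha\}$ satisfies (1) and (2) at a glance (each basic set $[s]$ is met by every $K_\alpha$ with $\alpha>\max\dom s$), but $K_\alpha\cong 2^{|\alpha|}$, so it keeps $\sum_\alpha|K_\alpha|\le\kappa$ only when $2^{<\kappa}=\kappa$. In general one must realize the same lower-convergence with compacta of size $<\kappa$, and the obstacle is that a single compact set meeting every basic set over a block $S$ of coordinates must project onto $2^S$ and hence have size $2^{|S|}$; a naive choice of witness points realizing all finite patterns likewise has too large a closure. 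The plan is therefore to build the $K_\alpha$ by transfinite recursion as \emph{small} (e.g.\ convergent) compacta, distributing the basic sets among the stages by a carefully chosen threshold assignment so that each $K_\alpha$ has to meet only few, clustered basic sets, while maintaining disjointness from the earlier compacta and from $D$. I expect the bookkeeping in this recursion --- reconciling (1), which pushes the $K_\alpha$ to be spread across all $\kappa$ coordinates, with the bound $\sum|K_\alpha|\le\kappa$, which forces each $K_\alpha$ to be small --- to be the crux, and I would anticipate treating $\kappa$ differently according to the relation between $cf(\kappa)$ and $\log\kappa$.
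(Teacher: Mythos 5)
Your opening reduction --- every non-empty regular closed set in $Z$ contains a final segment of $\kappa$ by Lemma~\ref{lm:large_SA}(ii), hence any fewer than $cf(\kappa)$ of them have a common final segment --- is correct, and it is exactly how the paper (implicitly) uses the lemma. The fatal error comes right after, and it is a misreading of the lemma's conclusion: the space produced by Lemma~\ref{lm:large_SA} lives on the set $Z=Y\cup\kappa$, \emph{not} on $Y\cup\bigcup_{\alpha<\kappa}K_\alpha$; each compactum $K_\alpha$ is collapsed to the single point $\alpha\in\kappa$, and its points do not appear in the final space at all. Hence $|Z|=|Y|+\kappa=\kappa$ holds unconditionally, and your proviso ``provided $\sum_{\alpha<\kappa}|K_\alpha|\le\kappa$'' is a phantom constraint: the cardinalities of the $K_\alpha$ are completely irrelevant. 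In particular, the ``transparent choice'' $K_\alpha=\{x: x(\alpha)=1 \text{ and } x(\beta)=0 \text{ for } \alpha<\beta<\kappa\}$, which you yourself exhibit and correctly check against conditions (1) and (2), already finishes the proof for every infinite $\kappa$; it is precisely the paper's construction. (The only remaining detail, which the paper also flags, is that the Hewitt--Marczewski--Pondiczery argument can be arranged to give a dense $Y\subs\{0,1\}^\kappa$ of size $\log\kappa$ whose points have supports cofinal in $\kappa$, hence $Y\cap\bigcup_{\alpha<\kappa}K_\alpha=\empt$.) So you had a complete proof in hand and then talked yourself out of it.

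Moreover, the repair you planned is not merely unnecessary but provably impossible, so the proposal as written could never have been completed. Your ``lower-convergence'' (equivalent to condition (1), since a subset of $\kappa$ meets every cofinal set iff it contains a final segment) forces the $K_\alpha$ to be large whenever $cf(\kappa)>\omega$: fix a countably infinite set $F\subs\kappa$ of coordinates; the countably many basic clopen sets determined by finite partial functions on $F$ have thresholds bounded by some $\delta<\kappa$, and then for every $\alpha\ge\delta$ the projection of the compact set $K_\alpha$ to $\{0,1\}^F$ is compact and dense, hence equals $\{0,1\}^F$, so $|K_\alpha|\ge \mf c$. Thus already for $\kappa=\omega_1$ in a model of $\neg$CH, a single admissible $K_\alpha$ has size $>\kappa$, and no ``threshold bookkeeping'' can salvage $\sum_{\alpha<\kappa}|K_\alpha|\le\kappa$. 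The theorem is provable in ZFC for all $\kappa$ precisely because Lemma~\ref{lm:large_SA} makes the sizes of the $K_\alpha$ irrelevant.
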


\begin{proof}
[Proof of theorem \ref{tm:any}]
Consider in the Cantor cube $\mathbb{C}_\kappa = \{0,1\}^\kappa$ the pairwise
disjoint non-empty compact subsets
$$K_\alpha = \{x \in \mathbb{C}_\kappa : x(\alpha) = 1 \mbox{ and } x(\beta) = 0 \mbox{ for all } \alpha < \beta < \kappa \}.$$
It is well known that $d(\mathbb{C}_\kappa) = \log \kappa$ and we leave it to the reader to check that
the standard proof of this fact (see e.g. \cite{Ju}) yields a dense set $Y \subs \mathbb{C}_\kappa$ with $|Y| = \log \kappa$
such that $Y \cap \bigcup_{\alpha < \kappa}K_\alpha = \empt$.

Now, it is obvious that we may apply lemma \ref{lm:large_SA} to the subspace $X = \bigcup_{\alpha < \kappa}K_\alpha\, \cup\, Y$
of $\mathbb{C}_\kappa$ to obtain the required space on $Z = Y \cup \kappa$.
\end{proof}

In the case $\kappa = \omega$ the countable AU, hence connected, space we obtain from theorem \ref{tm:any}
has the stronger property that any intersection of finitely many
non-empty regular closed sets is non-empty. We think that its construction is at least as simple as
Bing's in \cite{Bi}. In any case, it is certainly stronger because,
as is easily checked, both Gustin's and Bing's countable AU spaces contain
three non-empty regular closed sets whose intersection is empty.

Our next application of lemma \ref{lm:large_SA} will enable us to construct large AU spaces
that are locally small.
Before formulating it we recall that the dispersion character
$\Delta(X)$ of a space $X$ is the
smallest size of a non-empty open set in $X$.

\begin{theorem}\label{tm:large_SA}
For any infinite cardinal ${\kappa}$ there is an AU space $Z$
with a closed discrete subset $D\subs Z$
such that
$|D|=|Z|=\Delta(Z)=d(Z)={\kappa}$  and
\begin{align}\label{eq:large_SA0}
\text{$|D\setm A|<{\kappa}$ for each
$A\in \rcp(X)$.}
\end{align}
Moreover, there is a family
$\{U_{\alpha}:{\alpha}<{\kappa}\}$ of pairwise disjoint  open
sets in $Z \setm D$
such that  every point $z\in Z$ has a neighborhood $W_z$ for which
\begin{align}\label{eq:large_SA2}
\{{\alpha}<{\kappa}: U_{\alpha}\cap W_z\ne \empt\}
 \text{ is bounded in ${\kappa}$}.
\end{align}

\end{theorem}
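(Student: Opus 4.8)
The plan is to apply Lemma \ref{lm:large_SA} to a carefully chosen triple $(X,\{K_\alpha\},Y)$, so that the copy of $\kappa$ adjoined by the lemma becomes the required closed discrete set $D$. The AU property and \eqref{eq:large_SA0} then come essentially for free from conclusion (ii): every $A\in\rcp(Z)$ equals $\overline{V}^\varrho$ with $V=\operatorname{int}A\in\varrho$ non-empty, so $A$ contains a final segment $[\gamma_A,\kappa)$ of $D=\kappa$. As any two final segments of $\kappa$ meet, $A\cap B\ne\empt$ for all $A,B\in\rcp(Z)$, i.e. $Z$ is AU; and $D\setm A\subs\gamma_A$ gives $|D\setm A|<\kappa$, which is \eqref{eq:large_SA0}. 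That $D=\kappa$ is closed discrete, with $|D|=\kappa$, is immediate from the neighbourhood bases defining $\varrho$.

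Next I would record two soft reductions. Since $Y$ is dense open in $Z$ and every non-empty $\varrho$-open set meets $Y$, one checks $\Delta(Z)=\Delta(Y)$ and $d(Z)=d(Y)$. Moreover, once a family $\{U_\alpha:\alpha<\kappa\}$ of pairwise disjoint non-empty open subsets of $Y$ is produced, any dense set must spend a distinct point on each $U_\alpha$, so $d(Z)=d(Y)\ge\kappa$ is automatic. Thus the whole list $|D|=|Z|=\Delta(Z)=d(Z)=\kappa$ reduces to arranging $|Y|=\kappa$ and $\Delta(Y)=\kappa$, together with the construction of the escaping family $\{U_\alpha\}$ realising \eqref{eq:large_SA2}.

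For the construction itself I would introduce an \emph{escape axis}. Suppose we have a base space $X_0$ with $|X_0|=\kappa$, a dense set $Y_0=X_0\setm\bigcup_\delta K_\delta$ with $\Delta(Y_0)=\kappa$, and pairwise disjoint compacta $\{K_\delta:\delta<\kappa\}$ of size $\le\kappa$ that are cofinal-dense in the sense of Lemma \ref{lm:large_SA}(1). Form $X=X_0\times\kappa$, the second factor carrying the \emph{order} topology, collapse the \emph{short} fibres $\hat K_\delta=K_\delta\times(\delta+1)$ (compact, as $\delta+1$ is a compact initial segment), and put $Y=X\setm\bigcup_\delta\hat K_\delta$ and $U_\alpha=Y_0\times\{\alpha+1\}$. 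Then $|X|=\kappa$ forces $|Z|=\kappa$; since each successor is isolated, the $U_\alpha$ are pairwise disjoint open subsets of $Y$, and every basic open set of $Y$ contains a piece $(O\cap Y_0)\times\{\eta\}$ of size $\ge\Delta(Y_0)=\kappa$, whence $\Delta(Z)=\Delta(Y)=\kappa$. Condition (1) for the product follows from (1) for $X_0$ together with the fact that $\{\delta:(\delta+1)\cap V\ne\empt\}$ is a final segment for every non-empty open $V\subs\kappa$, and (2) follows from density of $Y_0$ in $X_0$.

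The decisive step is \eqref{eq:large_SA2}, and it is exactly what dictates the use of \emph{short} fibres. Because $\delta+1$ is clopen in the order topology, $W=X_0\times(\delta+1)$ is an open neighbourhood of $\hat K_\delta$ with $W\cap U_\alpha\ne\empt$ only when $\alpha+1\le\delta$, i.e. for $\alpha<\delta$; so the collapsed point $\delta$ sees only the bounded index set $\delta$, while a point $(y,\eta)\in Y$ sees only indices in a bounded window about $\eta$. This triangular incidence is precisely what reconciles the two opposing demands: (1) forces the compacta to be dense \emph{everywhere}, whereas \eqref{eq:large_SA2} forces $\{U_\alpha\}$ to \emph{escape}. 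I expect the main obstacle to be the hypothesis on $X_0$, namely a ZFC construction, valid for \emph{every} infinite $\kappa$, of a Hausdorff space of size $\kappa$ with $\Delta(Y_0)=\kappa$ and \emph{thin} cofinal-dense compacta of size $\le\kappa$. For $\kappa=\omega$ the finite compacta $K_n\cong 2^{n}$ of Theorem \ref{tm:any} already live inside a countable $X_0\subs 2^\omega$; but for $\kappa>\omega$ the fat Cantor-cube compacta $K_\delta\cong 2^{|\delta|}$ would push $|X_0|$ above $\kappa$, so one must replace them by size-$\le\kappa$ compacta while still securing (1). Producing such thin cofinal-dense compacta is where essentially all of the work should lie.
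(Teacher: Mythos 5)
Your reduction to Lemma \ref{lm:large_SA} is sound, and your ``escape axis'' product $X_0\times L(\kappa)$ with short fibres $\hat K_\delta=K_\delta\times(\delta+1)$ is structurally the very device the paper uses: there the ambient space is $\mathbb{E}_\kappa=L(\kappa)^\kappa$, the first coordinate is the axis, the compacta $K_\alpha$ are required to satisfy $p(\zeta)\le\alpha$ for $\zeta<\alpha$ (hence are ``short'' on the axis), and $U_\alpha=\{p\in Y:p(0)=\alpha+1\}$ together with neighbourhoods of the form $\{p : p(0)\le\gamma\}$ play exactly the roles of your $U_\alpha$ and $W_z$. But your proof has a genuine gap, and you locate it yourself: the base space $X_0$ --- of size $\kappa$, with thin (size $\le\kappa$) pairwise disjoint compacta that are cofinal-dense and whose complement is dense with dispersion character $\kappa$ --- is never constructed, and you say that ``essentially all of the work'' lies there. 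As written, the argument only proves a conditional statement, and for uncountable $\kappa$ the missing ingredient is not routine.

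What you are missing is that this difficulty is self-imposed. Since Lemma \ref{lm:large_SA} collapses each compactum to a single point, the cardinality of the compacta (and of the ambient space) is irrelevant to $|Z|=|Y\cup\kappa|$; what you need is not $|X_0|=\kappa$ but merely an ambient space of \emph{weight} $\kappa$, so that the complement of the compacta contains a dense subspace $Y$ with $|Y|=\Delta(Y)=\kappa$. This is exactly how the paper proceeds: it works in $\mathbb{E}_\kappa=L(\kappa)^\kappa$, a space of size $2^\kappa$ with fat compacta $K_\alpha$ (of size up to $2^{|\alpha|}$), notes that $w(\mathbb{E}_\kappa)=\kappa$ while $\Delta(\mathbb{E}_\kappa\setminus\bigcup_{\alpha<\kappa} K_\alpha)=2^\kappa$, and then picks a dense $Y$ inside the complement with $|Y|=\Delta(Y)=\kappa$; the space $Z=Y\cup\kappa$ then has size $\kappa$ regardless of how large the $K_\alpha$ are. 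Your own scheme closes in the same way: take $X_0=\mathbb{C}_\kappa$ with the compacta of Theorem \ref{tm:any} (no thinning needed), and replace ``$Y=X\setminus\bigcup_\delta\hat K_\delta$'' by a dense subset of it of size $\kappa$ with $\Delta=\kappa$, which exists because $w(\mathbb{C}_\kappa\times L(\kappa))=\kappa$. One further small repair: your $U_\alpha=Y_0\times\{\alpha+1\}$ need not be open in $Y$, since a point of $Y_0$ may be an accumulation point of the bounded union $\bigcup_{\delta\le\alpha}K_\delta$, and traces of $X$-open sets on $Y$ at level $\alpha+1$ only exclude the $K_\delta$ with $\delta\ge\alpha+1$; taking the full trace $U_\alpha=(X_0\times\{\alpha+1\})\cap Y$ fixes this, and disjointness and the escape estimates for \eqref{eq:large_SA2} go through unchanged.
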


\begin{proof}[Proof of theorem \ref{tm:large_SA}]
Let  $\mbb E_{\kappa}$ be the  product space $L({\kappa})^{\kappa}$,
where $L({\kappa})$ denotes $\kappa$ with the usual ordinal topology.

For any ${\alpha}<{\kappa}$ we let
\begin{align*}
K_{\alpha}=\{p\in \mbb E_{\kappa}:\ &
 p({\zeta})\le {\alpha} \text{ for all ${\zeta}<{\alpha}$},
\\&p({\alpha})=1 \land  p({\beta})=0
\text{ for all }
{\alpha < \beta} < {\kappa}\setm ({\alpha}+1)\}.
\end{align*}
Then the $K_{\alpha}$ are pairwise disjoint compact subsets of $\mbb E_{\kappa}$.
Put $K_\kappa =  \bigcup_{\alpha < \kappa}K_\alpha$.
Then clearly $\mbb E_{\kappa}\setm K_{\kappa}$ is dense
in $\mbb E_{\kappa}$,
$\Delta(\mbb E_{\kappa}\setm K_{\kappa})=2^{\kappa}$,
and $w(E_{\kappa})={\kappa}$, hence there is a dense set
$Y\subs \mbb E_{\kappa}\setm K_{\kappa}$ such that
$|Y|=\Delta(Y)={\kappa}$.

We may then apply  lemma \ref{lm:large_SA} to the space $X=Y\cup K_{\kappa}$ to obtain the space
$Z=\<Y\cup {\kappa},\varrho\>$ with the closed discrete set $D={\kappa}$.
Clearly, we have $|D|=|Z|=\Delta(Z)=d(Z)={\kappa}$  and property (\ref{lm:large_SA}) holds.

Let us now define
$$U_{\alpha}=\{p\in Y: p(0)={\alpha}+1\}$$ for
${\alpha}<{\kappa}$.
Since the singleton $\{{\alpha}+1\}$ is open in $L({\kappa})$, we have
$U_{\alpha} \in \varrho$, and clearly $\alpha \ne \beta$ implies $U_\alpha \cap U_\beta = \empt$.

Now, if $y\in Y$  then
$$W_y=\{p\in Y: p(0)\le y(0)\} \in \varrho$$
is a neighborhood of $y$ that witnesses \eqref{eq:large_SA2}
because      $W_y\cap U_{\beta}=\empt$
for ${\beta}\ge y(0)$.

If, on the other hand, $\alpha \in {\kappa}$, then $G_\alpha = \{p \in \mathbb{E}_\kappa : p(0) \le \alpha + 1\}$
is an open subset of $\mathbb{E}_\kappa$ with $K_\alpha \subs G_\alpha$, hence
\begin{align*}
W_\alpha=\{\alpha\}\cup \{p\in Y: p(0)\le {\alpha}+1\} \in \varrho
\end{align*}
is a neighborhood of $\alpha$ in $Z$ with
$W_\alpha \cap U_{\beta}=\empt$ for all ${\beta}>{\alpha}$.
\end{proof}

We say that a space $X$ is {\em locally $\kappa$} if every point of $X$
has a neighbourhood of cardinality $\le\, \kappa$.
The following easy result yields an upper bound for the cardinality of a locally $\kappa$
AU space. It will also be used in section 3 for SAU spaces.

\begin{theorem}\label{tm:bound_loc_countable_SA}
Any  locally $\kappa$ space $X$ contains an infinite clopen subset
$Y$ of cardinality $|Y| \le 2^{2^\kappa}$.
\end{theorem}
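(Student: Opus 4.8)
The plan is to realize $Y$ as the trace $X\cap M$ of a carefully chosen elementary submodel $M$, so that both halves of ``clopen'' fall out of the closure properties of $M$. First I would use local $\kappa$-ness together with the axiom of choice to fix a neighbourhood assignment $x\mapsto U_x$ with $x\in U_x\in\tau(X)$ and $|U_x|\le\kappa$ for every $x\in X$. Writing $\lambda=2^{2^\kappa}$, I would then choose, for suitably large $\theta$, an elementary submodel $M\prec H(\theta)$ with $X,\tau(X),(x\mapsto U_x)\in M$, with $|M|=\lambda$, with $\lambda\subseteq M$, and closed under $\kappa$-sequences in the sense that $[M]^{\le\kappa}\subseteq M$. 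Such an $M$ exists: build it as a continuous increasing union of $\kappa^+$ many elementary submodels of size $\lambda$, each one absorbing the $\le\kappa$-sized subsets of its predecessor; this keeps the size at $\lambda$ because $\lambda^\kappa=\lambda$, and the union is $\kappa$-closed because $\mathrm{cf}(\kappa^+)=\kappa^+>\kappa$. Finally set $Y=X\cap M$.

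Openness is the easy half. Given $y\in Y$, elementarity gives $U_y\in M$ (the function $x\mapsto U_x$ and the argument $y$ both lie in $M$), and then $|U_y|\le\kappa\le\lambda\subseteq M$ forces $U_y\subseteq M$: any member of $M$ of cardinality $\le\lambda$ is in fact a subset of $M$, since a surjection $\lambda\to U_y$ lives in $M$ and $\lambda\subseteq M$. Thus $U_y\subseteq X\cap M=Y$ is a neighbourhood of $y$, so $Y$ is open. That $|Y|\le|M|=\lambda$ is immediate, and $Y$ is infinite because $M$ contains $X$ along with an injection of $\omega$ into it.

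The main obstacle is closedness, and this is precisely where the exponent $2^{2^\kappa}$ is forced upon us. Given $z\in\overline{Y}$, the neighbourhood $U_z$ of $z$ satisfies $z\in\overline{U_z\cap Y}$, and $A:=U_z\cap Y=U_z\cap M$ is a subset of $M$ of size $\le\kappa$, hence $A\in[M]^{\le\kappa}\subseteq M$. At this point I would invoke the standard Hausdorff estimate $|\overline{A}|\le 2^{2^{|A|}}\le\lambda$, proved by noting that each point of $\overline{A}$ is determined by the trace on $A$ of its neighbourhood filter and that there are at most $2^{2^{|A|}}$ filters on $A$. Since $\overline{A}$ is definable from the parameters $A,X,\tau(X)\in M$, we get $\overline{A}\in M$, and because $|\overline{A}|\le\lambda\subseteq M$ the same absorption principle yields $\overline{A}\subseteq M$. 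Consequently $z\in\overline{A}\subseteq M$, so $z\in X\cap M=Y$, and $Y$ is closed. The delicate point throughout is to keep the two closure features of $M$ in balance: $\kappa$-closure is what places the small set $A$ into $M$, while the initial segment $\lambda\subseteq M$ is what pulls the whole closure $\overline{A}$ back inside $M$; it is exactly the need to fit $\overline{A}$, of possible size $2^{2^\kappa}$, into $M$ that pins the bound at $2^{2^\kappa}$ and not at anything larger.
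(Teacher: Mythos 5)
Your proof is correct, and at bottom it is the paper's argument dressed in elementary-submodel language. The paper fixes the neighbourhood assignment $p\mapsto U_p$ and runs an explicit closure procedure producing an infinite $Y$ with $|Y|\le 2^{2^\kappa}$ such that (a) $U_p\subseteq Y$ for all $p\in Y$, and (b) $\overline{A}\subseteq Y$ for all $A\in [Y]^{\le\kappa}$; then (a) makes $Y$ open and (b) makes it closed, since local $\kappa$-ness gives $t(X)\le\kappa$. Your model $M$ delivers exactly (a) and (b) for $Y=X\cap M$: the absorption step $U_y\in M\Rightarrow U_y\subseteq M$ is (a), while the combination of $[M]^{\le\kappa}\subseteq M$ with $\overline{A}\in M\Rightarrow\overline{A}\subseteq M$ is (b), and your closedness argument via $A=U_z\cap Y$ is precisely the proof that a locally $\kappa$ space has tightness $\le\kappa$. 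Both versions rest on the same two facts: the Hausdorff estimate $|\overline{A}|\le 2^{2^{|A|}}$ and the arithmetic $\bigl(2^{2^\kappa}\bigr)^\kappa=2^{2^\kappa}$, which is what keeps your chain of submodels, and the paper's closure, at size $\lambda$. What the submodel formulation buys is that the transfinite bookkeeping is hidden inside L\"owenheim--Skolem; what it costs is the extra care about $\kappa$-closure and absorption, and the paper's direct recursion is the more elementary route to the same bound. One small repair: for the absorption step applied to $\overline{A}$ you need $\lambda\in M$, not merely $\lambda\subseteq M$, in order to assert inside $M$ the existence of a surjection from $\lambda$ onto $\overline{A}$; this is harmless, since $\kappa\in\lambda\subseteq M$ and $\lambda=2^{2^\kappa}$ is definable from $\kappa$ in $H(\theta)$, but it should be said (for the openness step alone, a surjection from $\kappa$ onto $U_y$ suffices, using only $\kappa\in M$ and $\kappa\subseteq M$).
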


\begin{proof}
Since $X$ is Hausdorff, we have $|\overline A|\le 2^{2^\kappa}$ for all $A\in \br X;{\le \kappa};$.
We may fix for every point $p\in X$ a  neighborhood $U_p$ of size $\le\, \kappa$.
A very simple closure procedure
then yields an infinite  subset $Y\subs X$ of cardinality $\le 2^{2^\kappa}$
such that
\begin{enumerate}[(a)]
\item $U_p\subs Y$ for all $p \in Y$,
\item $\overline{A}\subs Y$ for all $A\in \br Y;{\le \kappa};$.
\end{enumerate}
Then (a) implies that $Y$ is open and (b) implies that $Y$ is closed because
$t(X) \le \kappa$.
\end{proof}

It is immediate from theorem \ref{tm:bound_loc_countable_SA} that any space which is locally $\kappa$
and connected, in particular AU,  has cardinality  $\le 2^{2^\kappa}$.
The following result implies that this upper bound is sharp: For every $\kappa$ there is a locally
$\kappa$ AU space of cardinality   $ 2^{2^\kappa}$.

\begin{theorem}\label{tm:AU2}
For every infinite cardinal ${\kappa}$ there is a locally $\kappa$  space $X$
with a  closed discrete subset $D$ such that
\begin{enumerate}[(i)]
\item $|X|=2^{2^{\kappa}}$ and  $d(X)=|D|={\kappa}$,
\item $|D\setm A|<{\kappa}$ holds
for any $A\in \rcp(X)$.
\end{enumerate}
In particular, then $\bigcap \mc A\ne \empt$ whenever
$\mc A\in \br \rcp(X);<cf({\kappa});$.
\end{theorem}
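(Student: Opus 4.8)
The plan is to apply Lemma \ref{lm:large_SA} once more, this time to an ambient space obtained by inflating the space built in Theorem \ref{tm:large_SA} up to cardinality $2^{2^{\kappa}}$. Write $X_0=Y_0\cup K_{\kappa}$ for the space to which Lemma \ref{lm:large_SA} was applied in the proof of Theorem \ref{tm:large_SA}, so that $Y_0$ is dense of size $\kappa$, the $K_{\alpha}$ are pairwise disjoint compact sets whose cofinal unions are dense, and I also have the pairwise disjoint open family $\{U_{\alpha}:\alpha<\kappa\}$ in $Y_0$ satisfying the boundedness property \eqref{eq:large_SA2}. Granting that I can produce a Hausdorff $\widetilde X\supseteq X_0$ of size $2^{2^{\kappa}}$ for which the same $K_{\alpha}$ still satisfy conditions (1),(2) of Lemma \ref{lm:large_SA} while $Y=\widetilde X\setm K_{\kappa}$ stays locally $\kappa$, the theorem follows: Lemma \ref{lm:large_SA} yields $Z=Y\cup\kappa$ whose property (ii) says that the closure of every nonempty open set contains a final segment of $\kappa$. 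Hence each $A\in\rcp(Z)$, having nonempty interior, contains a final segment of $D:=\kappa$, so $|D\setm A|<\kappa$; and for $\mc A\in\br\rcp(Z);<cf(\kappa);$ we get $D\setm\bigcap\mc A=\bigcup_{A\in\mc A}(D\setm A)$, a union of fewer than $cf(\kappa)$ sets each of size $<\kappa$, whence $|D\setm\bigcap\mc A|<\kappa=|D|$ and $\bigcap\mc A\ne\empt$.

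For the inflation I would proceed as follows. Choose $d_{\alpha}\in U_{\alpha}$ for each $\alpha<\kappa$; since the $U_{\alpha}$ are pairwise disjoint and open, $D_0=\{d_{\alpha}:\alpha<\kappa\}\subs Y_0$ is a discrete subspace of size $\kappa$ whose points are \emph{not} isolated. For every uniform ultrafilter $u$ on $\kappa$ I adjoin a new point $p_u$ and declare a $\varrho$-neighbourhood base of $p_u$ to consist of the sets $\{p_u\}\cup\bigcup_{\alpha\in S}U_{\alpha}$ with $S\in u$, keeping the old neighbourhoods on $X_0$; put $\widetilde X=X_0\cup\{p_u:u\text{ uniform on }\kappa\}$. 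Then $|\widetilde X|=2^{2^{\kappa}}$ and $Y=Y_0\cup\{p_u:u\}$.

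The verification I would carry out in turn. That $\widetilde X$ is Hausdorff: distinct $p_u,p_v$ are separated using $S\in u$ and $\kappa\setm S\in v$ together with the disjointness of the $U_{\alpha}$, while $p_u$ is separated from any $x\in X_0$ by choosing, via \eqref{eq:large_SA2}, a neighbourhood of $x$ meeting only boundedly many $U_{\alpha}$ and then an $S\in u$ disjoint from that bounded set (here uniformity of $u$ is exactly what guarantees such an $S\in u$ exists). Conditions (1),(2) of Lemma \ref{lm:large_SA} pass from $X_0$ to $\widetilde X$ because each basic neighbourhood of a $p_u$ contains a nonempty open subset $\bigcup_{\alpha\in S}U_{\alpha}$ of $X_0$, which therefore meets $Y_0$ and meets $K_{\alpha}$ for a final segment of $\alpha$'s. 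Finally $Z$ is locally $\kappa$: the points of $Y_0$ and the points $p_u$ have neighbourhoods tracing into $Y_0$, hence of size $\le\kappa$ as $|Y_0|=\kappa$, and avoiding the new points, while the collapsed points $\alpha$ retain small neighbourhoods just as in Theorem \ref{tm:large_SA}; moreover $d(Z)=\kappa$ since $Y_0$ is dense of size $\kappa$ whereas any dense set must meet each of the $\kappa$ pairwise disjoint nonempty open sets $U_{\alpha}$, and $D=\kappa$ stays closed discrete with $|D\setm A|<\kappa$ by the argument above.

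The main obstacle is precisely the tension that makes the statement nontrivial: being locally $\kappa$ of size $2^{2^{\kappa}}$ forces $w(Z)\ge 2^{2^{\kappa}}$, which together with $d(Z)=\kappa$ rules out realizing $Z$ inside any compactum of weight $\le 2^{\kappa}$ (a Cantor cube or $\beta\kappa$, say), so the extra $2^{2^{\kappa}}$ points must be introduced by hand as ultrafilter limits. The delicate point is that condition (1), which is what buys the anti-Urysohn conclusion, forces $Y$ to be crowded, so one cannot simply take ultrafilters on a set of isolated points. The device that resolves this, and which I expect to be the crux, is the disjoint family $\{U_{\alpha}\}$ with the boundedness property \eqref{eq:large_SA2} from Theorem \ref{tm:large_SA}: it simultaneously supplies a non-isolated discrete set $D_0$ carrying $2^{2^{\kappa}}$ uniform ultrafilters, the open traces $\bigcup_{\alpha\in S}U_{\alpha}$ needed to Hausdorff-separate the new points, and the local $\kappa$-ness of the resulting space.
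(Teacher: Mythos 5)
Your reduction breaks at its central step: the inflated space $\widetilde X$ you construct does \emph{not} satisfy hypothesis (1) of Lemma \ref{lm:large_SA}, and it cannot. Hypothesis (1), applied to $a=\kappa$, says that $K_\kappa=\bigcup_{\alpha<\kappa}K_\alpha$ is dense, i.e.\ that $\widetilde X\setm K_\kappa$ has empty interior in $\widetilde X$. But the basic neighbourhoods you give the new points, $\{p_u\}\cup\bigcup_{\alpha\in S}U_\alpha$, are non-empty open subsets of $\widetilde X$ contained in $Y_0\cup\{p_u\}$, hence disjoint from $K_\kappa$. So no union of $K_\alpha$'s is dense in $\widetilde X$ and the lemma does not apply. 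Your justification — that $\bigcup_{\alpha\in S}U_\alpha$ is ``a nonempty open subset of $X_0$'' which ``meets $K_\alpha$ for a final segment of $\alpha$'s'' — is false on both counts: the $U_\alpha$ of Theorem \ref{tm:large_SA} are open in the \emph{collapsed} space $Z$ (where $Y_0=Z\setm D$ is open), not in $X_0$; indeed no non-empty subset of $Y_0$ can be open in $X_0$, precisely because $K_\kappa$ is dense in $X_0$ by hypothesis (1) for $X_0$. And since $U_\alpha\subs Y_0$, the set $\bigcup_{\alpha\in S}U_\alpha$ meets no $K_\beta$ whatsoever. Note that this is not a repairable slip within your scheme: \emph{any} inflation in which the new points get neighbourhoods avoiding $K_\kappa$ destroys hypothesis (1), so ``inflate first, then apply the lemma'' cannot go through as stated (one would have to abandon the lemma and re-prove its conclusion by hand, going back into the concrete structure of $X_0$).

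The fix is to reverse your two operations, and this is exactly what the paper does: first apply Lemma \ref{lm:large_SA} to $X_0$, obtaining $Z=Y_0\cup\kappa$ as in Theorem \ref{tm:large_SA}; in $Z$ the sets $U_\alpha$ \emph{are} open and \eqref{eq:large_SA2} holds; then adjoin the set $S(\kappa)$ of uniform ultrafilters to $Z$ with precisely the neighbourhood bases you propose, $\{x\}\cup\bigcup_{\alpha\in a}U_\alpha$ for $a\in x$. Your Hausdorffness verification (disjoint $a\in u$, $b\in v$ for two ultrafilters; uniformity plus \eqref{eq:large_SA2} for a mixed pair) is correct and is essentially the paper's argument. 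The key point you then no longer need the lemma for the big space: $Z$ is dense open in $X=Z\cup S(\kappa)$, so $A\in\rcp(X)$ implies $A\cap Z\in\rcp(Z)$, whence $|D\setm A|<\kappa$ follows directly from Theorem \ref{tm:large_SA}, and $D$ stays closed discrete in $X$. Had your inflation worked, the resulting space would in fact have been homeomorphic to this one; the error lies purely in the order of operations — one must collapse the $K_\alpha$'s to points first, so that $Y_0$ (and with it the $U_\alpha$'s) becomes open, and only then glue on the $2^{2^\kappa}$ ultrafilter points.
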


\begin{proof}[Proof of Theorem \ref{tm:AU2} ]
By  theorem  \ref{tm:large_SA}
there is a space $Z$ with a closed discrete $D\subs Z$ such that
$|D|=|Z|=\Delta(Z)=d(Z)={\kappa}$ and
\begin{align*}
\text{$|D\setm A|<{\kappa}$ for all
$ A\in \rcp(Z)$,}
\end{align*}
moreover there are pairwise disjoint  open sets
$\{U_{\alpha}:{\alpha}<{\kappa}\}$ in $Z \setm D$ so that
every point  $z\in Z$ has a neighborhood $W_z$  which meets $U_\alpha$
only for boundedly many $\alpha < \kappa$.

The underlying set of our space is
\begin{align*}
X=Z\cup S({\kappa}),
\end{align*}
where $S({\kappa})$ is the set of all uniform ultrafilters on
${\kappa}$.  (Of course, we may assume that $Z \cap S({\kappa}) = \empt$.)
So we have $|X| = |S(\kappa)| = 2^{2^{\kappa}}$.

Next we define the topology $\tau$ on $X$ with the  following stipulations:
\begin{enumerate}[(i)]
 \item $Z\in \tau$ and the subspace topology
of $Z$ inherited from $X$ is the original topology of $Z$;
\item for any uniform ultrafilter $x\in S({\kappa})$
the family
\begin{align*}
 \mc U_x=\Big\{\{x\}\cup\bigcup\nolimits_{{\alpha}\in a} U_{\alpha}\ :\
a\in x\Big\}.
\end{align*}
is a $\tau$-neighborhood base of $x$.
\end{enumerate}

It is obvious from this definition that $Z$ is a dense open subspace of $X$, moreover
$D$ remains  a closed discrete set in  $X$. This immediately implies (i), while (ii)
follows because if  $A \in \rcp(X)$ then $A \cap Z \in \rcp(Z)$. The only thing
that is left to show is the Hausdorffness of $X$.

Since $Z$ is Hausdorff and open in $X$, it is obvious that any two points of $Z$
can be separated in $X$. If $\{x,y\} \in [S(\kappa)]^2$ then there are $a \in x$ and
$b \in y$ with $a \cap b = \empt$, hence

\begin{align*}
 \{x\}\cup\bigcup\nolimits_{\alpha\in a} U_\alpha
\text{\quad and \quad}
 \{y\}\cup\bigcup\nolimits_{\beta\in b} U_\beta
\end{align*}
are disjoint neighborhoods of $x$ and $y$ in $X$.

Finally, assume that $z\in Z$ and $x \in S({\kappa})$.
Then, by \eqref{eq:large_SA2},
there is ${\xi}\in {\kappa}$ such that
$W_z\cap U_{\zeta}=\empt$ for all ${\xi}\le {\zeta}<{\kappa}$.
But we may pick $a\in x$ with $a\cap {\xi}=\empt$, and
then $W_z$ and
\begin{align*}
 \{x\}\cup\bigcup\nolimits_{{\zeta}\in a} U_\zeta
\end{align*}
are disjoint neighborhoods of $z$ and $x$.
\end{proof}

Since in the above construction  $S({\kappa})$ is clearly closed discrete in
$X$, we actually get the following result.

\begin{corollary}
Given $\kappa \ge \omega$, for every cardinal $\lambda \le 2^{2^{\kappa}}$ there is a locally $\kappa$
AU space of cardinality $\lambda$. In particular, for every infinite cardinal $\lambda \le 2^\mathfrak{c}$
there is a locally countable AU space of cardinality $\lambda$.
\end{corollary}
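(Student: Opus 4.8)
The plan is to obtain all the required spaces by thinning out the large space $X = Z \cup S(\kappa)$ built in the proof of Theorem~\ref{tm:AU2}. I would keep the dense open core $Z$ (which carries the closed discrete set $D = \kappa$ of cardinality $\kappa$) fixed and delete only points of the closed discrete set $S(\kappa)$, whose cardinality is $2^{2^\kappa}$. Two facts I will lean on are that $Z$ is dense open in $X$ with $D \subseteq Z$, and that by Theorem~\ref{tm:large_SA} every $A \in \rcp(Z)$ satisfies $|D \setminus A| < \kappa$.

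First I would dispose of the small range $\omega \le \lambda \le \kappa$, where nothing new is needed: Theorem~\ref{tm:any} applied to the cardinal $\lambda$ yields an AU space of cardinality $\lambda$, and any space of cardinality $\le \kappa$ is vacuously locally $\kappa$. For the main range $\kappa \le \lambda \le 2^{2^\kappa}$ I would fix, using $|S(\kappa)| = 2^{2^\kappa} \ge \lambda$, a subset $T \subseteq S(\kappa)$ with $|T| = \lambda$ and put $X_\lambda = Z \cup T$. Since $S(\kappa)$ is closed discrete, the complementary set $S(\kappa) \setminus T$ is closed in $X$, so $X_\lambda$ is an \emph{open} subspace of $X$; as a subspace it is Hausdorff, as a subspace of a locally $\kappa$ space it is locally $\kappa$, and $|X_\lambda| = \max(\kappa,\lambda) = \lambda$.

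The one genuine step is to check that each $X_\lambda$ is still AU, and here I would argue that the whole intersection pattern is decided on $D$. Concretely, I would prove that $|D \setminus A| < \kappa$ for every non-empty $A \in \rcp(X_\lambda)$. Writing $A = \overline{V}^{X_\lambda}$ for a non-empty open $V$, I use that $Z$ is dense open in $X_\lambda$ (dense because $Z$ is dense in the larger space $X$), so that $V \cap Z$ is a non-empty open subset of $Z$; then
\[
\overline{V \cap Z}^{Z} = \overline{V \cap Z}^{X_\lambda} \cap Z \subseteq \overline{V}^{X_\lambda} \cap Z = A \cap Z,
\]
and since this leftmost set lies in $\rcp(Z)$, the property of $Z$ recalled above gives $|D \setminus A| \le |D \setminus \overline{V \cap Z}^{Z}| < \kappa$. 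Once this is known, for non-empty $A, B \in \rcp(X_\lambda)$ one has $|D \setminus (A \cap B)| \le |D \setminus A| + |D \setminus B| < \kappa = |D|$, whence $A \cap B \supseteq D \setminus \big((D\setminus A) \cup (D \setminus B)\big) \ne \empt$; so $X_\lambda$ is AU.

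The potential trap, and the reason a proof is needed at all, is that the AU property is \emph{not} inherited by arbitrary (even open) subspaces, so one cannot simply invoke that $X$ is AU. What rescues the argument is that deleting points of $S(\kappa)$ leaves the dense open core $Z$—and with it $D$ and the co-$(<\kappa)$ behaviour of $D$ granted by Theorem~\ref{tm:large_SA}—entirely intact, while every regular closed set of $X_\lambda$ projects down to a regular closed set of that fixed core. The role of $S(\kappa)$ being closed discrete is precisely to present each $X_\lambda$ as an honest open subspace obtained by a single deletion of a closed set. Finally, the ``in particular'' clause is just the instance $\kappa = \omega$, where $2^{2^\omega} = 2^{\mathfrak c}$ and ``locally $\omega$'' reads ``locally countable''.
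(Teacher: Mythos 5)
Your proposal is correct and is essentially the paper's own argument: the paper's one-sentence proof (``since in the above construction $S(\kappa)$ is clearly closed discrete in $X$\dots'') is exactly your thinning of $S(\kappa)$ to a set $T$ of size $\lambda$, with the AU property preserved because every non-empty regular closed set of $Z\cup T$ traces onto a non-empty regular closed set of the untouched dense open core $Z$ and hence misses fewer than $\kappa$ points of $D$. Your explicit handling of the range $\omega\le\lambda\le\kappa$ via Theorem~\ref{tm:any} merely fills in a trivial case the paper leaves implicit.
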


\section{Existence of strongly anti-Urysohn spaces}

We will see later in this section that, at least consistently,  strongly anti-Urysohn (SAU) spaces
exist. However, in strong contrast to the case of AU spaces, there are both lower and upper bounds
for their possible cardinalities. Before establishing these bounds, in the following theorem we
collect some simple properties of SAU spaces.

\begin{theorem}\label{tm:sAU-basic}
Let $X$ be any SAU space. Then
\begin{enumerate}[(1)]
 \item $X$ is countably compact;
\item every compact subset of $X$ is finite;
\item any infinite closed set $F \subs X$ is uncountable; hence
$A\in \br X;{\omega};$ implies $|A'|>{\omega}$;\smallskip
\item $\clp(X)$ is closed under countable intersections.
\end{enumerate}
\end{theorem}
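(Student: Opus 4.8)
The plan is to prove the four assertions about an SAU space $X$ in sequence, exploiting the defining property that any two infinite closed sets intersect together with the standing assumption that $X$ is an infinite Hausdorff space with at least two non-isolated points. The unifying idea is that the SAU property forbids $X$ from containing a ``separated'' pair of infinite closed sets, and each of the listed conclusions is really a statement that some familiar construction producing two disjoint infinite closed sets must fail.

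First I would prove (1), countable compactness. If $X$ were not countably compact, there would be a countably infinite closed discrete subset $\{x_n : n<\omega\}$. Since $X$ is Hausdorff (indeed $T_1$), I would split this into two disjoint infinite subsets, say the even- and odd-indexed points; each is closed because a subset of a closed discrete set is itself closed discrete, hence closed in $X$. These two infinite closed sets are disjoint, contradicting the SAU property. Next, for (2), suppose $K\subs X$ is an infinite compact subset. A compact Hausdorff space that is infinite contains a nontrivial convergent sequence, or more simply one can extract inside $K$ two disjoint infinite closed (in $K$, hence in $X$ since $K$ is closed) subsets: any infinite Hausdorff compactum can be partitioned using disjoint open sets around two distinct accumulation points into two infinite relatively closed pieces. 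These are disjoint infinite closed subsets of $X$, again contradicting SAU. Thus every compact subset is finite.

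For (3), the claim is that an infinite closed $F\subs X$ must be uncountable. If $F$ were countably infinite, then $F$ is a countably compact (by (1), as a closed subspace of a countably compact space) space that is countable; I would argue that a countable countably compact Hausdorff space contains an infinite compact subset, contradicting (2), or more directly derive a convergent sequence inside $F$ whose closure is an infinite compactum. The cleanest route is: a countable closed $F$ with the subspace topology is Lindelöf and countably compact, hence compact, but then $F$ itself is an infinite compact subset of $X$, contradicting (2). The parenthetical consequence, that $A\in \br X;{\omega};$ forces $|A'|>\omega$, follows because $\overline A$ is an infinite closed set, hence uncountable by what was just shown, while $\overline A = A \cup A'$ with $A$ countable forces $A'$ uncountable.

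Finally, for (4), I would show $\clp(X)$ is closed under countable intersections. Let $\{F_n : n<\omega\}$ be infinite closed sets and set $F=\bigcap_n F_n$; I must show $F$ is infinite. The natural approach is to form the decreasing sequence of closed sets $G_n=\bigcap_{k\le n}F_k$ and argue that each $G_n$ is infinite closed, then use countable compactness to conclude the intersection is nonempty and in fact infinite. The main obstacle lies precisely here: pairwise or finite intersections of infinite closed sets are infinite closed by the SAU property applied repeatedly (any two infinite closed sets meet, but I need their intersection to again be infinite, not merely nonempty, to iterate). The key step will be to establish that the intersection of two infinite closed sets is infinite; for this I would suppose the intersection were finite, remove those finitely many points using Hausdorffness to shrink one of the sets to an infinite closed set disjoint from the other, and derive a contradiction with SAU. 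Once finite intersections are known to be infinite closed, the nested family $\{G_n\}$ consists of infinite closed sets with the finite intersection property, and countable compactness of $X$ (from (1)) guarantees $\bigcap_n G_n=F$ is nonempty; a further argument, again reducing to the two-set case, upgrades nonempty to infinite.
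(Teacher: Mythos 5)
Your argument for (2) is where the proof genuinely breaks down, and both routes you offer rest on false claims about infinite compact Hausdorff spaces. First, an infinite compact Hausdorff space need not contain a nontrivial convergent sequence ($\beta\omega$ is the standard counterexample). Second, and more importantly, an infinite compactum $K$ need not contain two disjoint infinite closed subsets at all: the convergent sequence $K=\omega+1$ (equivalently, the one-point compactification of an infinite discrete set) has a \emph{single} accumulation point, and every infinite closed subset of $K$ must contain that point, so any two infinite closed subsets of $K$ meet. Hence "partitioning $K$ around two distinct accumulation points into two infinite relatively closed pieces" is impossible in general --- $K$ may simply not have two accumulation points. This is exactly why the definition of SAU demands $|X'|>1$, a hypothesis your proof of (2) never invokes; without it, statement (2) is outright false, since $\omega+1$ itself satisfies "any two infinite closed sets intersect." The paper's proof necessarily works \emph{outside} $K$: it takes two non-isolated points $p_0,p_1$ of $X$ (not of $K$) with disjoint neighbourhoods $U_0,U_1$, uses compactness of $K\setminus U_i$ to separate it from $p_i$ by disjoint open sets $V\supseteq K\setminus U_i$ and $W\ni p_i$, so that $K\setminus U_i$ is disjoint from the infinite closed set $\overline{W}$, whence $K\setminus U_i$ is finite by SAU; then $K=(K\setminus U_0)\cup(K\setminus U_1)$ is finite.

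In (4), your key step --- that the intersection of two infinite closed sets $F,G$ is infinite --- is under-justified: if $F\cap G=K$ is finite, "removing those finitely many points using Hausdorffness" does not by itself yield an infinite closed $F'\subseteq F$ disjoint from $G$, since nothing in Hausdorffness prevents every open $U\supseteq K$ from containing all but finitely many points of $F$. What saves the argument is (2): if every open $U\supseteq K$ had $F\setminus U$ finite, then $F$ would be compact (cover the finite set $K$ by finitely many members of a given open cover; only finitely many points of $F$ remain uncovered), contradicting (2) since $F$ is infinite. So some open $U\supseteq K$ leaves $F\setminus U$ infinite, closed, and disjoint from $G$, contradicting SAU. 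With that repair your route is a legitimate alternative to the paper's, which instead minimizes $n=|\overline{A}\cap\overline{B}|$ over pairs of countably infinite sets and shrinks $n$ using the non-compactness of $A\cup\{p\}$. For the countable case, your "further argument upgrading nonempty to infinite" is left unspecified; the paper's device is concrete and you should adopt something like it: once finite intersections are known to be infinite, recursively pick distinct points $p_n\in\bigcap_{m\le n}F_m$ and observe via (3) that the set $P=\{p_n:n<\omega\}$ has $P'$ uncountable with $P'\subseteq\bigcap_n F_n$. Parts (1) and (3) of your proposal are correct and essentially identical to the paper's arguments, but since (3) and (4) both lean on (2), the flaw there infects the whole proof as written.
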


\begin{proof}
(1) An infinite closed discrete set breaks into two disjoint infinite closed sets.

\smallskip\noindent (2)
Assume that $F\subs X$ is compact and let $p_0$ and $p_1$ be two different accumulation points
of $X$ with disjoint neighborhoods $U_0$ and $U_1$, respectively.
Then  the compact set $F\setm U_0$ and the point $p_0$ have disjoint open
neighbourhoods: $F\setm U_0\subs V$, $p_0\in W$ and $V\cap W=\empt$.
Then   $F\setm U_0$ and $\overline {W}$
are disjoint closed sets. But $p_0\in X'$ implies that
$\overline {W}$ is infinite, so $F\setm U_0$ is  finite because
$X$ is SAU.
A symmetrical argument yields that $F\setm U_1$ is also finite,
hence $F=(F\setm U_0)\cup (F\setm U_1)$ implies that $F$ is finite as well.

\smallskip\noindent (3)
If $F\subs X$ is countable and closed then $F$ is compact because
$X$ is  countably compact by (1).
Consequently, $F$ is finite by (2). The second part now follows from $\overline{A} = A \cup A'$.

\smallskip\noindent (4)
First we show that
\begin{align}\label{eq:finint}
\text{$\clp(X)$ is  closed under finite intersections.}
\end{align}
Otherwise we could
choose $A,B\in \br X;{\omega};$ such that $n=|\overline A\cap \overline B| < \omega$ is minimal.
Since $X$ is SAU, we can
pick  $p\in \overline A\cap \overline B\ne \empt$.

Then $A\cup \{p\}$ is not compact by (2), hence
there is an open set $U\ni p$ such that  $A\setm U$ is infinite.
But $p\notin \overline{A\setm U}$,
so
\begin{align*}
 \overline{A\setm U}\cap \overline{B}\subs (\overline{A}\cap \overline{B})
\setm \{p\},
\end{align*}
consequently $|\overline{A\setm U}\cap \overline{B}| < n$,
which contradicts the choice of  $n$.
So we proved \eqref{eq:finint}.

Now assume that $\{F_n:n\in {\omega}\}\subs \clp(X)$.
Using  \eqref{eq:finint} and (3)
we can pick by recursion points $$p_n\in \bigcap_{m\le n}F_m\setm \{p_i:i<n\}$$ for $n<{\omega}$, and put
 $P=\{p_n:n<{\omega}\}$. Then, by (3), $P'$ is infinite, in fact even uncountable,
 and we have  $P'\subs \bigcap_{n\in {\omega}} F_n$.
\end{proof}

All our (consistent) examples of SAU spaces that we shall construct below have cardinality $\le \mathfrak{c}$.
We do not know if SAU spaces of size $> \mathfrak{c}$ can exist but we have the following related result.

\begin{theorem}
Every SAU space $X$ has a
SAU subspace of size $\le \mathfrak{c}$.
\end{theorem}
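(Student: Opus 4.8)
The plan is to realize the desired subspace as the trace $Y = X \cap M$ of a suitable elementary submodel $M$. Concretely, I would fix a large enough regular cardinal $\theta$ and build an elementary submodel $M \prec H(\theta)$ with $X \in M$, $|M| = \mathfrak{c}$, that is moreover \emph{countably closed}, i.e.\ $[M]^{\omega} \subseteq M$. This is possible precisely because $\mathfrak{c}^{\omega} = \mathfrak{c}$: one takes $M = \bigcup_{\xi < \omega_1} M_\xi$ for a continuous $\subseteq$-increasing chain $\langle M_\xi : \xi < \omega_1 \rangle$ of elementary submodels of size $\mathfrak{c}$ with $[M_\xi]^{\omega} \subseteq M_{\xi+1}$; regularity of $\omega_1$ makes any countable sequence from $M$ land inside some $M_\xi$, so $M$ is countably closed, and $|M| = \mathfrak{c}\cdot\omega_1 = \mathfrak{c}$. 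Setting $Y = X \cap M$ gives $|Y| \le |M| = \mathfrak{c}$, and $Y$ is an infinite Hausdorff space as a subspace of $X$, infinite because by elementarity $M$ contains a countable infinite subset of $X$.

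Next I would verify that $Y$ has at least two non-isolated points. Pick any $A \in [X]^{\omega} \cap M$. Since $A$ is countable and $A \in M$, the standard argument (a bijection $\omega \to A$ lives in $M$) gives $A \subseteq M$, hence $A \subseteq Y$. The derived set $A'$, computed in $X$, belongs to $M$ and is uncountable by part (3) of Theorem \ref{tm:sAU-basic}; in particular $|A'| \ge 2$, so by elementarity I may choose two distinct $p_0, p_1 \in A' \cap M \subseteq Y$. As each $p_i$ is a limit point of $A$ in $X$ and $A \setminus \{p_i\} \subseteq Y$, both $p_0$ and $p_1$ are non-isolated in $Y$.

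The heart of the argument is to show that any two infinite closed subsets $A, B$ of the subspace $Y$ intersect. Here the main obstacle is that closedness is meant in the subspace topology, i.e.\ $\overline{A}^X \cap Y = A$ and $\overline{B}^X \cap Y = B$, so one cannot apply the SAU property of $X$ to $A$ and $B$ directly. The key observation that circumvents this is that countable subsets suffice: choose countable infinite $A_0 \subseteq A$ and $B_0 \subseteq B$. Since $A_0, B_0 \subseteq Y \subseteq M$ are countable and $M$ is countably closed, we have $A_0, B_0 \in M$. Now $\overline{A_0}^X, \overline{B_0}^X \in \clp(X)$, so by the SAU property of $X$ their intersection is non-empty; this statement is expressible and true in $M$ by elementarity, yielding a witness $q \in M$ with $q \in \overline{A_0}^X \cap \overline{B_0}^X$. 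Then $q \in Y$, and since $A_0 \subseteq A$, $B_0 \subseteq B$ we get $q \in \overline{A}^X \cap Y = A$ and $q \in \overline{B}^X \cap Y = B$ by relative closedness; thus $q \in A \cap B$, as required.

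Combining the previous two paragraphs shows that $Y$ is SAU with $|Y| \le \mathfrak{c}$. I expect the only mildly delicate points to be purely routine: the chain construction of a countably closed $M$ of size exactly $\mathfrak{c}$ (using $\mathfrak{c}^{\omega} = \mathfrak{c}$) and the bookkeeping that a countable element of $M$ is a subset of $M$. The conceptual crux is the reduction from arbitrary infinite relatively closed sets to their countable subsets, which is exactly what allows elementarity to deliver the common point inside $M$, and it is here that the SAU hypothesis on $X$ and the countable closedness of $M$ are used together.
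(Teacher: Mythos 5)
Your proof is correct, and its combinatorial heart coincides with the paper's: both arguments hinge on reducing arbitrary infinite relatively closed subsets $A,B\subseteq Y$ to countable infinite subsets $A_0\subseteq A$, $B_0\subseteq B$, producing a point of $\overline{A_0}\cap\overline{B_0}$ that lies \emph{inside} $Y$, and then pulling that point back into $A\cap B$ via relative closedness; both also use $\mathfrak{c}^{\omega}=\mathfrak{c}$ as the cardinality engine. The difference is in the packaging of the closure step. The paper does it with minimal machinery: it fixes one explicit witness function $\varphi:[X]^{\omega}\times[X]^{\omega}\to X$ with $\varphi(A,B)\in\overline{A}\cap\overline{B}$, takes $Y\supseteq Y_0\cup\{p,q\}$ of size $\le\mathfrak{c}$ closed under $\varphi$ (where $p,q$ are two accumulation points of a fixed countable $Y_0$, thrown in by hand to guarantee two non-isolated points), and is done in a few lines. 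You instead take $Y=X\cap M$ for a countably closed elementary submodel $M\prec H(\theta)$ of size $\mathfrak{c}$; the countable closedness $[M]^{\omega}\subseteq M$ is exactly what replaces $\varphi$-closedness (it makes $A_0,B_0$ elements of $M$, so elementarity supplies the witness $q\in M$), and elementarity also hands you the two non-isolated points for free rather than by an explicit initial choice. What your route buys is automation — membership of countable subsets, the witness function, and the non-isolated points all follow from one construction, and the same $M$ would absorb any further definable requirements; what it costs is the heavier apparatus of $H(\theta)$, the $\omega_1$-chain with $[M_\xi]^{\omega}\subseteq M_{\xi+1}$, and the regularity-of-$\omega_1$ argument, all of which the paper's one-function closure avoids. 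Both are complete, valid proofs.
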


\begin{proof}
We may fix a function  $\,\varphi:\br X;{\omega};\times \br X;{\omega};\to X$
such that  $\varphi(A,B)\in \overline A\cap \overline B$ for all 
$\langle A,B \rangle \in \br X;{\omega};\times \br X;{\omega};$.
Let us also fix $Y_0\in \br X;{\omega};$. By theorem \ref{tm:sAU-basic}(3), $Y_0$ has two
accumulation points $p$ and $q$.

Since $\mf c^{\omega}=\mf c$, there is a set $Y$ with
$Y_0 \cup \{p,q\} \subs Y\in \br X;\le \mf c;$  which is $\varphi$-closed, i.e. $\varphi(A,B)\in Y$ holds
whenever $A,B\in \br Y;{\omega};$.

But then any two infinite closed subsets of $Y$ intersect,
moreover, $|Y'|>1$ because $p,q \in Y_0' \subset Y'$, hence
$Y$ is SAU.
\end{proof}

Now we turn to giving the lower and upper bound for the possible cardinalities of SAU spaces.
To do that we first prove two lemmas. The first one is purely combinatorial. To formulate it
we recall that a set $A$ is said to {\em split} another set $B$
iff both $B\cap A$ and  $B\setm A$ are infinite.
Also, a family of sets $\mc A$ is called a {\em splitting family for $X$} if
every  $B\in \br X;{\omega};$ is split by some member of $\mc A$.

\begin{lemma} \label{lm:comb}
If  $\mc A$  is a splitting family for $X$
then $|X|\le 2^{|\mc A|}$.
\end{lemma}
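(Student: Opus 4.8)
The plan is to associate to each point of $X$ its ``trace'' on $\mc A$ and then to bound the fibers of the resulting map. For each $x\in X$ define $f_x\colon \mc A\to\{0,1\}$ by $f_x(A)=1$ iff $x\in A$. This gives a map $x\mapsto f_x$ from $X$ into $\{0,1\}^{\mc A}$, whose range has cardinality at most $2^{|\mc A|}$. It therefore suffices to prove that every fiber of this map, i.e. every set $S\subs X$ on which all the $f_x$ agree, is finite, since then $X$ is a union of at most $2^{|\mc A|}$ finite sets.

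The key step is exactly this finiteness of the fibers, and it is where the splitting hypothesis enters. Fix such a fiber $S$. Since any two points of $S$ belong to precisely the same members of $\mc A$, for every $A\in\mc A$ we have either $S\subs A$ or $S\cap A=\empt$. Consequently no $A\in\mc A$ can split any infinite $B\subs S$: such a $B$ satisfies $B\subs A$ or $B\cap A=\empt$, so $B\cap A$ and $B\setm A$ cannot both be infinite. If $S$ were infinite we could choose some $B\in\br S;{\omega};$, and then $B$ would be split by no member of $\mc A$, contradicting that $\mc A$ is a splitting family for $X$. Hence every fiber is finite.

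Finally I would assemble the cardinal estimate. There being at most $2^{|\mc A|}$ fibers, each finite, we get $|X|\le 2^{|\mc A|}\cdot\omega$. Because $X$ is infinite while each fiber is finite, $\mc A$ itself cannot be finite (finitely many finite fibers would force $X$ to be finite), so $|\mc A|\ge\omega$ and thus $2^{|\mc A|}\ge\omega$; this makes $2^{|\mc A|}\cdot\omega=2^{|\mc A|}$, yielding $|X|\le 2^{|\mc A|}$ as claimed. The argument is short and I do not expect a genuine obstacle; the only point needing care is the observation that each member of $\mc A$ meets a fiber either fully or not at all, which is precisely what prevents the fibers from being split and hence keeps them finite.
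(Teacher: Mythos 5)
Your proof is correct and follows essentially the same route as the paper: the paper also maps each $x$ to its trace $\mc A(x)=\{A\in\mc A : x\in A\}$ and observes that each fiber must be finite since no member of $\mc A$ can split it, making the map finite-to-one. Your version just spells out the two details the paper leaves implicit (passing to a countably infinite $B\subs S$ to invoke the splitting hypothesis, and noting $|\mc A|\ge\omega$ so that the final cardinal arithmetic goes through).
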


\begin{proof}
For every $x\in X$ let us put $\mc A(x)=\{A\in \mc A:x\in A\}$.
Then for each subfamily  $\mc B\subs \mc A$ the set $S = \{x \in X :\mc A(x)=\mc B\}$ is finite
because no element of $\mc A$ can split $S$.
Thus  the map $x\mapsto \mc A(x)$ is finite-to-one and hence we have  $|X|\le 2^{|\mc A|}$.
\end{proof}

The next lemma involves the well-known splitting number $\mathfrak{s}$ which is defined as
the smallest cardinality of a splitting family for $\omega$.

\begin{lemma}\label{lm:small-w}
If $X$ is any space of weight  $w(X)<\mf s$ then  every set $A\in \br X;{\omega};$
has an infinite subset $B$ with at most one accumulation point in $X$, i.e. such that $|B'|\le 1$.
So, if in addition, $X$ is countably compact then actually $X$ is sequentially compact.
\end{lemma}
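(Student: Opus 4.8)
The plan is to use the definition of $\mathfrak{s}$ as the least size of a splitting family of $\omega$ directly: I convert a base of $X$ into a family of subsets of $\omega$ and exploit that, having size $<\mathfrak{s}$, it cannot split everything.

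First I would fix a base $\mc B$ of $X$ with $|\mc B|=w(X)<\mf s$, and fix an injective enumeration $A=\{a_n:n<\omega\}$ of the given set $A\in\br X;{\omega};$. For each $U\in\mc B$ set $A_U=\{n<\omega:a_n\in U\}$. The family $\{A_U:U\in\mc B\}$ has cardinality at most $w(X)<\mf s$, so it is not a splitting family for $\omega$; hence there is an infinite $C\subseteq\omega$ that no $A_U$ splits. Put $B=\{a_n:n\in C\}$, an infinite subset of $A$. The key property, which drives everything, is that for every $U\in\mc B$ exactly one of $B\cap U$, $B\setminus U$ is finite, according as $C\setminus A_U$ or $C\cap A_U$ is finite.

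Next I verify $|B'|\le 1$. Suppose toward a contradiction that $p\ne q$ both lie in $B'$. Since $X$ is Hausdorff and $\mc B$ is a base, I can choose $U,V\in\mc B$ with $p\in U$, $q\in V$ and $U\cap V=\empt$. As $p,q\in B'$, both $B\cap U$ and $B\cap V$ are infinite, so by the key property $B\setminus U$ and $B\setminus V$ are both finite; but then all but finitely many points of $B$ would lie in $U\cap V=\empt$, which is impossible since $B$ is infinite. This establishes the first assertion.

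Finally, for the additional claim I assume $X$ is also countably compact. I would first observe that the key property yields more than $|B'|\le 1$: if $B'=\{p\}$ then $B$ in fact converges to $p$, since any neighborhood of $p$ contains some $U\in\mc B$ with $p\in U$, for which $B\cap U$ is infinite (as $p\in B'$) and hence $B\setminus U$ is finite. To deduce sequential compactness, take any sequence in $X$; if some value repeats infinitely often a constant subsequence converges, and otherwise the sequence is finite-to-one with infinite range $A$, to which the first part applies. Countable compactness forces $B'\ne\empt$, so $B'=\{p\}$ and $B\to p$; enumerating the infinitely many (finitely-often-repeated) indices whose terms land in $B$ then gives a subsequence converging to $p$. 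The only mildly delicate point is precisely this passage from the set statement $|B'|\le 1$ to a genuinely convergent \emph{subsequence}: one must separate the repeated-value case, which is why the argument splits into the two cases above. Everything else is a direct unwinding of the definitions of $\mf s$ and of accumulation point.
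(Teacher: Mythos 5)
Your proof is correct and takes essentially the same approach as the paper's: fix a base of size $<\mathfrak{s}$, extract an infinite subset of $A$ that no basic open set splits, and use Hausdorff separation to bound the number of accumulation points by one. You merely make explicit two steps the paper leaves to the reader, namely the transfer from the base to a family of subsets of $\omega$ via an enumeration of $A$, and the passage from $|B'|=1$ to an actual convergent subsequence in the countably compact case.
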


\begin{proof}
Let $\mc U$ be a base of $X$ with  $|\mc U| <\mf s$. This implies that there is
$B\in \br A;{\omega};$ such that no element of  $\mc U$
splits $B$. But then the Hausdorff property of $X$ clearly implies $|B'|\le 1$. Indeed, if
$x$ and $y$ would be distinct accumulation points of $B$ with disjoint neighbourhoods
$U, V \in \mc U$, then both $U$ and $V$ would split $B$.

The second part follows because the countable compactness of $X$ implies $|B'| \ge 1$
for all $B\in \br X;{\omega};$.
\end{proof}

We are now ready to give the promised lower and upper bounds for the size of a SAU space.

\begin{theorem}\label{tm:bounds}
If $X$ is any SAU space then $$\mf s\le |X|\le 2^{2^{\mf c}}.$$
\end{theorem}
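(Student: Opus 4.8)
The plan is to prove the two inequalities separately, drawing on the lemmas already established. For the upper bound $|X| \le 2^{2^{\mf c}}$, the natural strategy is to produce a splitting family for $X$ of size $\le 2^{\mf c}$ and then invoke Lemma \ref{lm:comb}, which would immediately give $|X| \le 2^{2^{\mf c}}$. So the real content here is a counting argument bounding the number of sets one needs to split every countable subset of $X$. The idea is that to split a given $B \in \br X;{\omega};$ it suffices to find a single set $A$ with both $B \cap A$ and $B \setm A$ infinite; since there are only $\mf c$ many subsets of each countable $B$ up to the relevant equivalence, and one would like to assemble a uniform family, the key is to show that $2^{\mf c}$ many splitting sets suffice. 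I expect one can take a suitable family of closed sets or use that $X$, being countably compact by Theorem \ref{tm:sAU-basic}(1), has enough structure to extract such a family.

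For the lower bound $\mf s \le |X|$, the plan is to argue by contraposition using Lemma \ref{lm:small-w}. Suppose toward a contradiction that $|X| < \mf s$. Since any Hausdorff space satisfies $w(X) \le 2^{|X|}$, this alone is not enough, so instead I would try to bound the weight directly: if $|X| < \mf s$ one wants $w(X) < \mf s$ as well, which holds for instance if $X$ is zero-dimensional or more generally if a small cardinality forces small weight. Assuming $w(X) < \mf s$, Lemma \ref{lm:small-w} yields that every $A \in \br X;{\omega};$ has an infinite subset $B$ with $|B'| \le 1$. Combined with countable compactness (Theorem \ref{tm:sAU-basic}(1)), which forces $|B'| \ge 1$, we get an infinite $B$ with exactly one accumulation point, say $|B'| = 1$. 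But this directly contradicts Theorem \ref{tm:sAU-basic}(3), which asserts that any infinite set $A \in \br X;{\omega};$ has $|A'| > {\omega}$, in particular $|B'| > 1$. This contradiction establishes $\mf s \le |X|$.

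The hard part will be justifying the passage from $|X| < \mf s$ (or more precisely $|X| \le \mf s$ with the goal of deriving a contradiction below $\mf s$) to a bound on the weight $w(X)$ that Lemma \ref{lm:small-w} can consume, since weight can in principle exceed cardinality for Hausdorff spaces. One clean route is to pass to a subspace: by the preceding theorem every SAU space contains an SAU subspace of size $\le \mf c$, so for the lower bound it suffices to consider the minimal size directly, and one should check whether the SAU property itself constrains the weight of a minimal example. Alternatively, and perhaps more robustly, I would look for a splitting family built from the topology whose size is controlled by $|X|$ rather than $w(X)$: if $X$ is SAU and small, one can try to show that the family of neighborhoods, or of closed sets realized as $\overline{A}$ for $A \in \br X;{\omega};$, already splits every countable set, and then compare its size against $\mf s$. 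Getting the cardinal bookkeeping exactly right so that Lemma \ref{lm:small-w} applies is where the argument must be handled carefully.

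Finally, I would note that the two halves are genuinely independent: the upper bound is a pure splitting-number counting argument via Lemma \ref{lm:comb}, while the lower bound is a contradiction argument combining countable compactness, the uncountability of the derived set from Theorem \ref{tm:sAU-basic}(3), and the weight estimate of Lemma \ref{lm:small-w}. I expect the upper bound to follow quickly once the splitting family of size $\le 2^{\mf c}$ is exhibited, and I anticipate that the main obstacle throughout is ensuring that the relevant cardinal invariant controlling the splitting family (either $w(X)$ or a substitute) is pinned below $\mf s$ in the lower-bound direction and below $2^{\mf c}$ in the upper-bound direction.
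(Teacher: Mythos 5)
Your skeleton (a splitting family fed into Lemma \ref{lm:comb} for the upper bound; Lemma \ref{lm:small-w} against Theorem \ref{tm:sAU-basic}(3) for the lower bound) is exactly the paper's, but in both halves the step you defer is the step that carries the whole proof, so as written there are two genuine gaps. For the upper bound you never exhibit the splitting family of size $\le 2^{\mf c}$, and your counting heuristic cannot produce one: there are $|X|^{\omega}$ many countable sets to be split, and countable compactness alone imposes no cardinality bound at all (Cantor cubes $\{0,1\}^{\lambda}$ are compact and arbitrarily large), so the SAU hypothesis must enter, and in your upper-bound sketch it never does. The missing idea is to fix a \emph{single} $A\in \br X;{\omega};$, put $F=\overline A$, and invoke Pospi\v{s}il's theorem to get $|F|\le 2^{\mf c}$; then take a family $\mc A$ of open subsets of $X$, of size $\le |F|$, containing for each pair of points of $F$ a disjoint separating pair. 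SAU is what makes this $\mc A$ split \emph{every} $B\in \br X;{\omega};$: by Theorem \ref{tm:sAU-basic}(3) the set $B'$ is infinite and closed, by the finite-intersection property established in the proof of Theorem \ref{tm:sAU-basic}(4) the set $F\cap B'$ is infinite, so $B$ has two distinct accumulation points $x,y\in F$; if $U,V\in\mc A$ are disjoint with $x\in U$, $y\in V$, then $B\cap U$ and $B\cap V\subs B\setm U$ are both infinite (accumulation points are $\omega$-accumulation points in Hausdorff spaces), i.e.\ $U$ splits $B$. Lemma \ref{lm:comb} then gives $|X|\le 2^{2^{\mf c}}$.

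For the lower bound you correctly identify the obstacle --- $w(X)$ may exceed $|X|$, so Lemma \ref{lm:small-w} does not apply directly --- but none of your three escape routes is carried out, and the first two fail: passing to an SAU subspace of size $\le\mf c$ does nothing to control weight, and zero-dimensionality is vacuous here, since in an SAU space every clopen set is finite or cofinite, whence no SAU space can be zero-dimensional. Your third suggestion (a separating family of size $\le |X|$ built from the topology) is the right instinct, and the clean way to finish it is the paper's trick of \emph{coarsening}: for each $\{x,y\}\in \br X;2;$ fix disjoint open $U_{xy},V_{xy}$ and let $\varrho$ be the topology these generate. Then $\varrho$ is a Hausdorff topology, coarser than that of $X$, with $w(X,\varrho)\le |X|<\mf s$; Lemma \ref{lm:small-w} applied to $(X,\varrho)$ yields $B\in\br X;{\omega};$ with at most one $\varrho$-accumulation point, and since every $\varrho$-open set is open in $X$, the set $B$ has at most one accumulation point in $X$ itself, contradicting Theorem \ref{tm:sAU-basic}(3). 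So your outline is faithful to the paper, but both load-bearing ideas --- Pospi\v{s}il plus funnelling all countable sets into one small closed set $F$, and the coarser-Hausdorff-topology trick --- are absent and have to be supplied.
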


\begin{proof}
Assume that $|X|<\mf s$. Then there is a coarser $Hausdorff$ topology
$\varrho$ on $X$ of weight
$w(X,\varrho)\le |X|$, hence by lemma \ref{lm:small-w} there is
$B\in \br X;{\omega};$ such that $B$ has at most one accumulation point in
$(X,\varrho)$. Then $B$ has at most one accumulation point in the finer topology of $X$, as well.
But this implies that $X$ is not SAU by Theorem \ref{tm:sAU-basic}(3).

To verify  the upper bound of $|X|$,
consider any $A\in \br X;{\omega};$ and put $F=\overline A$.
Then, by Pospi\v sil's theorem, we have $|F|\le 2^{\mf c}$, hence
there is a family $\mathcal{A}$  of relatively open subsets of  $F$
that $T_2$-separates the points of $F$ and $|\mc A|\le |F| \le 2^{\mf c}$.

By Theorem \ref{tm:rc_not_star}(4),
for every  $B\in \br X;{\omega};$ we have $F \cap B' \in \clp(X)$,
hence $B$ has at least two, in fact uncountably many, accumulation points in $F$.
But then  some element of
$\mc A$  splits $B$, i.e. $\mc A$  is a splitting family for $X$.
By lemma \ref{lm:comb}, this implies
\begin{align*}
|X|\le 2^{|\mc A|}\le 2^{|F|}\le 2^{2^{\mf c}},
\end{align*}
which completes the proof.
\end{proof}

Of course, the previous results are only of interest if SAU spaces, at least consistently, exist.
So now we turn to proving that they do. Our first construction will make use of the
{\em reaping number} $\mathfrak{r}$ whose definition we recall next.

If $\mc A$ is any family of infinite sets then a set $S$ is said to reap $\mc A$
iff $S$ splits every member of $\mc A$. Now, $\mathfrak{r}$ is the minimum cardinality
of a family $\mc A \subs [\omega]^\omega$ such that no $S \in [\omega]^\omega$ reaps $\mc A$.
So the assumption $\mathfrak{r} = \mathfrak{c}$, that will figure in our construction of a SAU space given
below, is equivalent to the statement that every subfamily of $[\omega]^\omega$
of size $< \mathfrak{c}$ can be reaped by a member of $[\omega]^\omega$.

To make the inductive construction of our SAU space easier to digest,
we prove first the following lemma.

\begin{lemma}\label{lm:one-step}
Assume that $\<X,\tau\>$ is a locally countable space of weight $w(X)<\mf r$,
moreover we are given
$I,J\in \br X;{\omega};$ and a family $\{\<x_i,A_i\>:i<{\kappa}\} \subs X\times
\br X;{\omega};$, where $\kappa < \mf r$ and $x_i\in ({A_i}')^\tau$ for all $i < \kappa$.
Then, for any fixed $p \notin X$, there is a locally countable Hausdorff topology $\rho$ on $X\cup \{p\}$
such that
\begin{enumerate}[(i)]
\item $\tau\subs \rho$ and $w(\rho) \le w(\tau)$,
\smallskip
\item $p\in  (I')^\rho \cap (J')^\rho$,
\smallskip
\item $x_i \in {(A_i')}^\rho$ for all $i<{\kappa}$.
\end{enumerate}
\end{lemma}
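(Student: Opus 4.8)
The plan is to adjoin $p$ as the limit of a carefully chosen countable set $S\subseteq I\cup J$, while \emph{refining} $\tau$ on $X$ just enough to turn $S$ into a closed discrete set. A refinement is genuinely forced here: if an infinite $S_I\subseteq I$ converged to some $q\in X$ in $\tau$, then $p$ and $q$ could never be separated while $p$ accumulates at $S_I$. So we must kill such convergence \emph{inside} $X$, and the whole game is to do this without destroying any of the prescribed relations $x_i\in(A_i')^\tau$. First I would fix a base $\mc U$ of $\langle X,\tau\rangle$ of countable open sets with $|\mc U|=w(X)<\mf r$ (local countability), split $I\cup J$ into infinite pieces lying in $I$ and in $J$, and — using that every infinite Hausdorff space has an infinite relatively discrete subspace — thin them to $S_I\in\br I;{\omega};$ and $S_J\in\br J;{\omega};$ whose union $S=S_I\cup S_J$ is relatively discrete, witnessed by $\tau$-open sets $V_s\ni s$ with $V_s\cap S=\{s\}$.

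The key selection step is combinatorial and is where $\mf r$ enters. Consider the family $\mc C=\{U\cap A_i : U\in\mc U,\ i<\kappa,\ |U\cap A_i|=\omega\}$; since $\max(\kappa,w(X))<\mf r$ we have $|\mc C|<\mf r$, so its traces on $I$ (resp.\ on $J$) can be reaped by a single set. I would choose $S_I$ and $S_J$ to be such reaping sets, so that for every $i<\kappa$ and every basic $U\ni x_i$ the set $(U\cap A_i)\setm S$ is still infinite — one of the three pieces $(U\cap A_i)\setm(I\cup J)$, $(U\cap A_i\cap I)\setm S_I$, $(U\cap A_i\cap J)\setm S_J$ survives. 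This says exactly that $S$ does not almost-cover any member of $\mc C$, which is what a reaping set gives, and it guarantees that after detaching $S$ each $x_i$ remains an accumulation point of $A_i\setm S$, hence of $A_i$.

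Now let $\tau'$ be generated by $\tau$ together with $X\setm S$ (and the $V_s$), so that $S$ becomes closed discrete, every point keeps a countable neighbourhood, and $w(\tau')\le w(\tau)$. For $x_i\in S$ nothing changed locally, so $x_i\in(A_i')^{\tau'}$ automatically; for $x_i\notin S$ the basic neighbourhoods are $U\setm S$, and the reaping choice makes $(U\cap A_i)\setm S$ infinite, so again $x_i\in(A_i')^{\tau'}$. On $Z=X\cup\{p\}$ I declare a neighbourhood base of $p$ to consist of the sets $\{p\}\cup\bigcup_{s\in A}V_s$, where $A$ ranges over a filter $\mc F$ on $S$ refining the cofinite filter and keeping both $S_I$ and $S_J$ positive; points of $X$ keep their $\tau'$-neighbourhoods. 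Since every basic neighbourhood of $p$ meets $S_I\subseteq I$ and $S_J\subseteq J$ infinitely, $p\in(I')^\rho\cap(J')^\rho$, giving (ii); and $\tau\subseteq\tau'\subseteq\rho$ with the weight bounds yields (i) and (iii). Local countability of $\rho$ is then immediate, each basic neighbourhood of $p$ being a countable union of countable sets.

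The hard part — where I expect the real work to lie — is the Hausdorffness of $\rho$, i.e.\ separating $p$ from each $x\in X$ (separation within $X$ is inherited from $\tau'$). To separate $p$ from $x$ one needs $A\in\mc F$ and a $\tau'$-neighbourhood $W\ni x$ with $W\cap\bigcup_{s\in A}V_s=\empt$, i.e.\ $A$ almost disjoint from the trace $T_W=\{s:V_s\cap W\ne\empt\}$. For $x$ off the $\{V_s\}$-cluster this is free, but at cluster points it constrains $\mc F$. The crux is thus to build $\mc F$ of character $\le w(\tau)$ that simultaneously keeps $S_I,S_J$ positive (for (ii)) and, for each $x$, contains a set disjoint from $T_W$ for some $W\ni x$ (for Hausdorffness). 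This is the second place $w(X)<\mf r$ is used: the traces $\{s:V_s\cap U\ne\empt\}$ with $U\in\mc U$ form a family of fewer than $\mf r$ subsets of $S$, and a reaping/thinning argument on them should let me further shrink $S$ and fix $\mc F$ so that the clustering of $\{V_s\}$ is tamed without swallowing $S_I$ or $S_J$. Checking that a single reaping set can be chosen to respect positivity of $S_I,S_J$ together with the finitely many covering constraints at each point — in the merely $T_2$ setting, where a closed discrete set need not have a regular open expansion — is the main obstacle I would concentrate on.
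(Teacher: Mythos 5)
Your plan stalls exactly where you say it does, and the obstacle is not a technicality that further reaping can remove: with expansion sets $V_s$ around a discrete $S=S_I\cup S_J\subseteq I\cup J$, the requirements you impose on the filter $\mc F$ can be outright contradictory. Since $\<X,\tau\>$ is only Hausdorff, the open sets $V_s$ witnessing discreteness of $S$ need not be pairwise disjoint or locally finite, and nothing in your construction prevents a point $x$ (possibly a point of $S$ itself) such that \emph{every} neighbourhood $W$ of $x$ meets $V_s$ for all but finitely many $s\in S_I$, i.e. $S_I\setminus T_W$ is finite for every $W\ni x$. For such an $x$, any $A\in\mc F$ with $A\cap T_W=\empt$ satisfies that $A\cap S_I$ is finite; but $\mc F$ refines the cofinite filter, so then $A\setminus S_I\in\mc F$ is disjoint from $S_I$, destroying the positivity of $S_I$ that (ii) requires. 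So for such a configuration \emph{no} filter $\mc F$ works, and re-choosing $S$ and the $V_s$'s so that this clustering never occurs is precisely the problem you left open: reaping gives splitting of a small family, not the property ``some filter set avoids a trace $T_W$ at every point while staying positive on $S_I$ and $S_J$,'' and the merely-$T_2$ setting gives you no handle on where the expansions cluster.

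The paper's proof dissolves this by never using expansions at all, and that is the missing idea. By local countability fix a countable open $U\in\tau$ with $I\cup J\subseteq U$, enumerate $U=\{u_n:n\in\omega\}$, and fix a base $\mc B$ with $|\mc B|=w(\tau)$. Recursively choose $D_n\in [U\setminus\{u_n\}]^{\omega}$ reaping a family $\mc C_n$, where $\mc C_0$ consists of $I$, $J$, and all infinite sets of the form $B\cap U$ and $A_i\cap B\cap U$ (so $|\mc C_0|<\mf r$), and $\mc C_{n+1}=\mc C_n\cup\{C\cap D_n,\ C\setminus D_n: C\in\mc C_n\}$; this closure step guarantees that every finite Boolean combination of the $D_n$'s meets every member of $\mc C_0$ infinitely (incidentally also curing the $I\cap J$ interaction that your reaping step glosses over). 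Then $\rho$ is generated by $\tau\cup\{X\setminus D_n:n\in\omega\}\cup\bigl\{\{p\}\cup D_n:n\in\omega\bigr\}$: the neighbourhoods of $p$ are the sets $\{p\}\cup D_n$ \emph{themselves}, made clopen in trace on $X$, so Hausdorffness comes for free by diagonalization --- $u_n$ and $p$ are separated by $X\setminus D_n$ and $\{p\}\cup D_n$, and any $x\notin U$ by that same pair since $D_n\subseteq U$ --- while the preserved infinitude of the Boolean combinations yields (ii) and (iii). In short, replacing ``filter of unions of open expansions'' by ``sets $D_n$ declared open together with their complements'' is what makes the separation problem trivial instead of intractable.
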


\begin{proof}
Since $\<X,\tau\>$ is locally countable, there is a countable
$U\in \tau$ with $I\cup J\subs U$.
We fix an enumeration $U=\{u_n:n\in {\omega}\}$ and
a base $\mc B$ of $X$ of cardinality $w(\tau)$.

Next we consider the family
\begin{align}\notag
\mc C_0 = \big(\{I,J\}\cup \big\{B\cap U: B\in \mc B \big\}
\cup
\big\{A_i\cap B\cap U : i<{\kappa},\,B\in \mc B \big\}\big) \cap [U]^\omega.
\end{align}
Clearly, $|\mc C_0| < \mf r$, hence there is $D_0 \in [U \setm \{u_0\}]^\omega$ that reaps $\mc C_0$.

We continue this by recursion on $n\in {\omega}$: If $\mc C_n \in \big[[U]^\omega \big]^{< \mf r}$ and
$D_n \in [U \setm \{u_n\}]^\omega$  reaping  $\mc C_n$ have been defined, then we put
$$\mc C_{n+1}=\mc C_n\cup \{C\cap D_n\,,\, C\setm D_n: C\in \mc C_n \}\,.$$
Then $\mc C_{n+1} \in [U]^\omega$ and $|\mc C_{n+1}|= |\mc C_n| < \mf r$,
hence we may choose $D_{n+1} \in [U \setm \{u_{n+1}\}]^\omega$
that reaps $\,\mc C_{n+1}$.

Our topology $\rho$ on $X\cup\{p\}$ is generated by the family
\begin{align}
\tau\cup \big\{X\setm D_n: n\in {\omega} \big\} \cup
\big\{\,\{p\} \cup D_n \,: n\in {\omega}\big\}.
\end{align}
Then (i) and the local countability of $\rho$ are clear.
To see that $\rho$ is Hausdorff it suffices to show that any $x \in X$ and $p$
have disjoint $\rho$-neighbourhoods. But if $x = u_n$ then $X \setm D_n$ and
$\{p\} \cup D_n$ work,
while for $x \notin U$ any  $X \setm D_n$ and $\{p\} \cup D_n$ will work.

To check (ii), by symmetry, it suffices to show that $p\in  (I')^\rho$.
This is clearly equivalent to the statement
\begin{align}
 \text{ $I\cap \bigcap_{m<n}D_m$ is infinite for all $n < \omega$,}
\end{align}
which we can prove
by induction on $n$.
If $n=0$, then
$I\cap \bigcap_{m<n}D_m=I$
is infinite by assumption.
If
\begin{align}
 \text{$C=I\cap \bigcap_{m<n}D_m$ is infinite}
\end{align}
then  $C\in \mc C_n$, hence  $C\cap D_n = I\cap \bigcap_{m<n+1}D_m$
is infinite as well, because $D_n$ splits $C$.
This completes the proof of (ii).

For any $\varepsilon\in Fn({\omega},2)$ let us put
\begin{align*}
 D_\varepsilon =\bigcap_{\varepsilon(n)=1} D_n
\cap \bigcap_{\varepsilon(n)=0} (X\setm D_n).
\end{align*}
It is easy to prove by induction on $|\varepsilon|$ that for every $\varepsilon\in Fn({\omega},2)$
and for every $C \in \mc C_0$ we have $|C \cap D_\varepsilon| = \omega$.

Now, to check (iii), fix $i<{\kappa}$.
If  $x_i\in {(A_i\cap U)'}^\tau$, which certainly holds if $x_i \in U$, then for any $B \in \mc B$
with $x_i\in B$ we have $A_i\cap U\cap B \in \mc C_0$
and hence, by the above,
$A_i\cap U\cap B\cap D_\varepsilon$ is infinite
for each $\varepsilon\in Fn({\omega},2)$.
But this clearly implies that  $x_i \in (A_i \cap U)'^\rho \subs (A'_i)^\rho$.

If, on the other hand, we have  $x_i \notin {(A_i\cap U)'}^\tau$, then
$x_i\in {(A_i \setm U)'}^\tau$. But in this case  $x_i \notin U$ and then the obvious fact
that $\tau$ and $\rho$ coincide on $X \setm U$ trivially implies that
$x_i\in {(A_i \setm U)'}^\rho \subs (A'_i)^\rho$.

Thus we have verified (iii) and completed the proof of the lemme.
\end{proof}

We are now ready to formulate and prove our first existence result concerning
SAU spaces.

\begin{theorem}\label{tm:sAU-lc-cont}
If $\mf r=\mf c$ then there is a locally countable, separable, and crowded
SAU space of cardinality $\mf c$.
\end{theorem}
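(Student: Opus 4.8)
The plan is to build the space by a transfinite recursion of length $\mf c$, using Lemma \ref{lm:one-step} as the one-step engine. I would start from a fixed countable, crowded, metrizable space $X_0$ (e.g. the rationals) and construct an increasing, continuous chain of locally countable Hausdorff topologies $\langle \tau_\alpha : \alpha \le \mf c\rangle$ on sets $X_\alpha$, where $X_{\alpha+1}=X_\alpha\cup\{p_\alpha\}$ adds a single fresh point and $\tau_\lambda$ is generated by $\bigcup_{\alpha<\lambda}\tau_\alpha$ at limits. The final space is $X=X_{\mf c}$ with $\tau=\tau_{\mf c}$; clearly $|X|=\mf c$. Since Lemma \ref{lm:one-step}(i) gives $w(\tau_{\alpha+1})\le w(\tau_\alpha)$ and a limit topology has weight at most $|\lambda|+\omega$, one has $w(\tau_\alpha)\le|\alpha|+\omega<\mf c=\mf r$ throughout, so the weight hypothesis of the lemma is met at every stage; the same bound shows that the auxiliary constraint family introduced below stays of size $<\mf r$.

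The engine secures the SAU property as follows. It suffices to arrange that any two sets $A,B\in[X]^{\omega}$ have a common accumulation point in the final topology, for then two infinite closed sets $F,G$ meet: pick countable $A\subs F$, $B\subs G$ and note that a common accumulation point $p$ of $A,B$ lies in $\overline A\cap\overline B\subs F\cap G$. Because $\mf c^{\omega}=\mf c$, I can enumerate all pairs $\langle A,B\rangle\in[X]^{\omega}\times[X]^{\omega}$ in order type $\mf c$; and because $\mathrm{cf}(\mf c)>\omega$ by K\"onig's theorem, every countable subset of $X$ is contained in some $X_\gamma$ with $\gamma<\mf c$, so each pair can be processed at a stage $\alpha$ with $A,B\subs X_\alpha$. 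At that stage I apply Lemma \ref{lm:one-step} with $I=A$, $J=B$, fresh point $p_\alpha$, and the family of all accumulation constraints $\langle x,C\rangle$ (meaning $x\in C'$) recorded so far; this yields $\tau_{\alpha+1}$ with $p_\alpha\in A'\cap B'$ while preserving every earlier constraint. I then add $\langle p_\alpha,A\rangle$ and $\langle p_\alpha,B\rangle$ to the list of constraints.

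The decisive invariant is that an accumulation relation $x\in C'$, once recorded, survives to $\tau$. At successor stages this is exactly clause (iii) of Lemma \ref{lm:one-step}. At a limit $\lambda$ it holds because every basic $\tau_\lambda$-neighbourhood of $x$ already lies in some $\tau_\beta$ with $\beta<\lambda$, where $x\in(C')^{\tau_\beta}$, so that neighbourhood meets $C$ in an infinite set; hence $x\in(C')^{\tau_\lambda}$. Persistence guarantees that the common accumulation point $p_\alpha$ produced for the pair $\langle A,B\rangle$ remains one in $\tau$, which is precisely what the SAU reduction above needs.

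Crowdedness and separability I would secure through the same constraint mechanism. Registering at stage $0$ a constraint $q\in A_q'$ for each $q\in X_0$ (with $A_q\subs X_0$ a sequence converging to $q$), and the constraint $p_\alpha\in A'$ for each new point, makes every point of $X$ a persistent accumulation point, so $\tau$ is crowded and $X'=X$, giving $|X'|>1$. The harder point is keeping the fixed countable set $X_0$ dense, since refining the topology can both expel a new point from $\overline{X_0}$ and thin out $X_0$ near an old point. I would handle both by feeding extra sets into the reaping family $\mc C_0$ built inside the proof of Lemma \ref{lm:one-step}: a density target $C_\alpha\subs X_0\cap U$ (so that $p_\alpha\in C_\alpha'$ persists, keeping $p_\alpha\in\overline{X_0}$), together with all traces $B\cap X_0\cap U$ with $B$ in the base. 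Since that proof shows every Boolean combination $D_\varepsilon$ meets each member of $\mc C_0$ in an infinite set, every refined basic open set still meets $X_0$ infinitely, so $X_0$ stays dense, and these additions keep $|\mc C_0|<\mf r$. Here I use that $U\cap X_0$ is infinite: $U$ is a nonempty open set, hence meets $X_0$ by density, and the persistent constraint at that point of $X_0$ forces infinitely many points of $X_0$ into $U$. The main obstacle, then, is exactly this simultaneous bookkeeping: carrying $<\mf r$ accumulation and density constraints alive through $\mf c$ successive refinements, never letting their number or the weight reach $\mf r$, while ensuring that the finer topologies neither isolate a point nor destroy the density of $X_0$.
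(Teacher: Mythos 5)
Your proposal is correct and is essentially the paper's own proof: the same transfinite recursion of length $\mf c$ over a bookkeeping enumeration of all pairs of countable infinite sets, with Lemma \ref{lm:one-step} supplying each successor step, and the persistence of recorded accumulation constraints (clause (iii) at successors, the finite-intersection argument at limits) yielding the SAU property, crowdedness, and $|X'|>1$. The one point where you diverge---strengthening the reaping family $\mathcal{C}_0$ inside the lemma to keep $X_0$ dense---is sound but superfluous: since every new point $p_\alpha$ persistently accumulates a set $A\subs X_\alpha$ consisting of previously added points, transfinite induction on the stage at which a point appears already gives $X\subs \overline{X_0}$, which is exactly how the paper (whose constraint family $\{\<\xi,I_\xi\>,\<\xi,J_\xi\>:\xi<\zeta\}\cup\{\<q,\mbb Q\setm\{q\}\>:q\in\mbb Q\}$ matches yours) deduces separability.
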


\begin{proof}[Proof of theorem \ref{tm:sAU-lc-cont}]
The underlying set of our space will be $\mbb Q\cup \mf c$, where $\mathbb{Q}$ is the set of rational numbers.
Our aim is to achieve that the closures of any two  members of $[\mbb Q\cup \mf c]^\omega$
intersect, so we fix an enumeration of all these pairs:
\begin{align*}
\big\{\{I_{\zeta},J_{\zeta}\}:{\zeta}< \mathfrak{c}\big\}= \big[[\mbb Q\cup \mf c]^\omega \big]^2\,,
\end{align*}
where $I_{\zeta}\cup J_{\zeta}\subs \mbb Q\cup {\zeta}$ for all ${\zeta}<\mf c$.

Then, by transfinite recursion on $\zeta \le \mf c$, we define
locally countable Hausdorff topologies
$\tau_{\zeta}$  on $X_{\zeta}=\mbb Q\cup {\zeta}$ as follows:
\begin{enumerate}[(1)]
 \item $\tau_0$
is the usual topology of $\mbb Q$;
\smallskip
\item if ${\zeta}<{\nu}\le\mf c$ then $w(\tau_\zeta) < \mf r$, moreover
\begin{align*}
\tau_{\zeta}\subs \tau_{\nu}\text{ and }
{\zeta}\in (I_{\zeta}')^{\tau_{\nu}}\cap
(J_{\zeta}')^{\tau_{\nu}};
      \end{align*}
      \smallskip
\item if ${\nu}\le \mf c$ is a limit ordinal then
$\tau_{\nu}$ is generated by
$\bigcup_{\zeta<\nu} \tau_\zeta$ on $X_\nu$;
\smallskip
\item for every ${\zeta} < \mf c$ we obtain $\tau_{\zeta + 1}$ by applying Lemma \ref{lm:one-step}
to the space $\<X_{\zeta},\tau_{\zeta}\>$ with $p={\zeta}$, the pair
$\{I_{\zeta}, J_{\zeta}\}$,
and the family
\begin{align*}
\big\{\<q,\mbb Q\setm \{q\}\>:q\in \mbb Q\big\}\cup
 \big\{\<{\xi}, I_{\xi}\> : {\xi}<{\zeta}\big\} \cup \big\{
\<{\xi}, J_{\xi}\> :{\xi}<{\zeta}\big\}.
\end{align*}
\end{enumerate}

We claim that the space $\<X_{\mf c},\,\tau_{\mf c}\>$ is as required. Indeed,
local countability and Hausdorffness is built in and,
using (4), one may easily check by transfinite induction that
$\mbb Q$ is both dense and crowded in each
$\<X_{\zeta}, \tau_\zeta\>$, hence $\<X_{\mf c},\,\tau_{\mf c}\>$ is separable and crowded.

Finally $\langle X_{\mf c}, \tau_{\mf c} \rangle$ is SAU
because for any $I,J\in \br X;{\omega};$ there is ${\zeta}<\mf c$
such that $\{I,J\} = \{I_\zeta , J_{\zeta}\}$, and so
${\zeta}\in (I')^{\tau_{\mf c}}\cap (J')^{\tau_{\mf c}} \ne \empt$.
\end{proof}

Now that we know that locally countable SAU spaces may consistently exist,
it makes sense to remark that they certainly cannot have cardinality $> 2^\mathfrak{c}$.
Indeed, this is an immediate consequence of theorem \ref{tm:bound_loc_countable_SA}
because a clopen subset of a SAU space must have finite complement.
Although we do not know if this upper bound is sharp, at least we know that it is
smaller than the upper bound for all SAU spaces given by theorem \ref{tm:bounds}.

We now turn to another method of constructing consistent examples of SAU spaces.
Unlike the construction in theorem \ref{tm:sAU-lc-cont}, this will allow us
to produce simultaneously SAU spaces of many different sizes.
These constructions will make use of certain (consistent) combinatorial principles that we
formulate below.

\begin{definition}\label{df:star}
Let ${\kappa} \ge \omega$ and ${\mu}$ be  cardinals, where ${\mu}=1$ or ${\mu}$ is infinite.
Then  $\circledast_{\kappa,\mu}$ is the following statement:
There is a sequence
\begin{align*}
 \<\<A^0_{\alpha},A^1_{\alpha}\>:{\alpha} \in {\kappa}\>
\end{align*}
such that
\begin{enumerate}[(A)]
 \item each $\<A^0_{\alpha},A^1_{\alpha}\>$ is a partition of ${\alpha}\times {\mu}$,
\item
for every $S\in \br {\kappa}\times {\mu};{\omega};$ there is
${\beta} <{\kappa}$ such that $\<\<A^0_{\alpha},A^1_{\alpha}\> : {\alpha} \in {\kappa} \setm \beta\>$
is dyadic on $S$, i.e.
\begin{align*}
 \big|S \cap \bigcap \{A^{\varepsilon({\zeta})}_{\zeta} : {\zeta}\in \dom(\varepsilon) \}\big|={\omega},
\end{align*}
whenever $\varepsilon \in Fn({\kappa}\setm {\beta},2)$.
\end{enumerate}
\end{definition}
In what follows, we shall write
$\,A[\varepsilon]=\bigcap \{A^{\varepsilon({\zeta})}_{\zeta} : {\zeta}\in \dom(\varepsilon) \}$.
Also, a sequence witnessing $\circledast_{\kappa,\mu}$ will be called
simply a $\circledast_{\kappa,\mu}$-sequence.

The following simple result is given just for orientation.

\begin{proposition}\label{*}
$\circledast_{\kappa,{\mu}}$ implies
$\mf s\le cf({\kappa})\le  \kappa\le \mf c$.
\end{proposition}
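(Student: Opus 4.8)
The plan is to establish the three inequalities $\mf s\le cf(\kappa)$, $cf(\kappa)\le\kappa$, and $\kappa\le\mf c$ separately. The middle one holds for every infinite cardinal, so all the work lies in the two outer ones, and both of them will flow from a single observation. Fix a $\circledast_{\kappa,\mu}$-sequence $\langle\langle A^0_\alpha,A^1_\alpha\rangle:\alpha\in\kappa\rangle$. The observation is this consequence of clause (B): if $T\in[\kappa\times\mu]^\omega$ and $\beta<\kappa$ is a threshold for $T$ as in (B), then for every $\alpha\in\kappa\setminus\beta$ both $T\cap A^0_\alpha$ and $T\cap A^1_\alpha$ are infinite, as one sees by applying the dyadic property to the one-point functions $\varepsilon=\{\langle\alpha,0\rangle\}$ and $\varepsilon=\{\langle\alpha,1\rangle\}$. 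Since $A^0_\alpha$ and $A^1_\alpha$ are disjoint, $T\setminus A^0_\alpha\supseteq T\cap A^1_\alpha$ is infinite, so $A^0_\alpha$ \emph{splits} $T$. Thus every $A^0_\alpha$ with $\alpha$ past the threshold of $T$ splits $T$.

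For $\mf s\le cf(\kappa)$ I would fix a cofinal set $C\subseteq\kappa$ with $|C|=cf(\kappa)$ and an arbitrary countably infinite $S\subseteq\kappa\times\mu$, and show that $\{A^0_\alpha\cap S:\alpha\in C\}$ is a splitting family for $S$. Indeed, given $T\in[S]^\omega$, let $\beta$ be a threshold for $T$; since $C$ is cofinal there is $\alpha\in C$ with $\alpha\ge\beta$, and by the observation above $A^0_\alpha$ splits $T$, i.e. $(A^0_\alpha\cap S)\cap T$ and $T\setminus(A^0_\alpha\cap S)$ are both infinite. Identifying $S$ with $\omega$, this produces a splitting family of size $\le|C|=cf(\kappa)$, whence $\mf s\le cf(\kappa)$.

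For $\kappa\le\mf c$ I would again fix a countably infinite $S\subseteq\kappa\times\mu$ together with a single threshold $\beta<\kappa$ supplied by (B), and consider the map $\alpha\mapsto A^0_\alpha\cap S$ from $\kappa\setminus\beta$ into $\mathcal{P}(S)$. The key point is that this map is injective: if $\beta\le\alpha<\alpha'$, then applying the dyadic property to the $\varepsilon$ with $\dom(\varepsilon)=\{\alpha,\alpha'\}$, $\varepsilon(\alpha)=0$, and $\varepsilon(\alpha')=1$ produces a point $s\in S\cap A^0_\alpha\cap A^1_{\alpha'}$, which lies in $A^0_\alpha\cap S$ but not in $A^0_{\alpha'}$; hence $A^0_\alpha\cap S\ne A^0_{\alpha'}\cap S$. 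As $\beta<\kappa$ gives $|\kappa\setminus\beta|=\kappa$, injectivity yields $\kappa\le|\mathcal{P}(S)|=\mf c$.

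The only genuinely delicate point, and where I would take care, is the interaction between the partition clause (A) and the splitting and injectivity arguments: $A^0_\alpha$ and $A^1_\alpha$ partition only $\alpha\times\mu$, not all of $\kappa\times\mu$, so $A^1_\alpha$ need not be the complement of $A^0_\alpha$ relative to $T$ or to $S$. This is precisely why, in the splitting step, I use the one-sided containment $T\setminus A^0_\alpha\supseteq T\cap A^1_\alpha$ rather than an equality, and why, in the injectivity step, I extract an explicit witnessing point $s$ from the (infinite, hence nonempty) set $S\cap A^0_\alpha\cap A^1_{\alpha'}$ instead of arguing via complements. Everything else is routine bookkeeping with clause (B).
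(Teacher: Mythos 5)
Your proof is correct and follows essentially the same route as the paper: the splitting part uses the cofinal family $\{A^0_\alpha\}$ exactly as the paper does (you merely make explicit the transfer of the splitting family from $\kappa\times\mu$ to a countable set), and your injectivity argument for $\alpha\mapsto A^0_\alpha\cap S$ is the contrapositive form of the paper's pigeonhole contradiction, using the same two-point dyadicity witness. Your care about $\langle A^0_\alpha, A^1_\alpha\rangle$ partitioning only $\alpha\times\mu$ is a sound touch, but it is the same proof.
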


\begin{proof}
If  $\<\<A^0_{\alpha},A^1_{\alpha}\>:{\alpha}<{\kappa}\>$
is a $\circledast_{\kappa, \mu}$-sequence and $I\subs {\kappa}$ is cofinal then,
by condition \ref{df:star}(B), $\,\{A^0_{\alpha} : {\alpha}\in I\}$ is a splitting family
for $\kappa \times \mu$. Thus $\mf s\le cf({\kappa})$.

Now, assume that we had ${\kappa}>\mf c$.
Then fix $S \in [\kappa \times \mu]^\omega$ and
${\beta} <{\kappa}$ such that $\<\<A^0_{\alpha},A^1_{\alpha}\>:{\beta}\le {\alpha}<{\kappa}\>$
is dyadic on $S$. But $|{\kappa}\setm {\beta}|={\kappa}>\mf c$,
so there are ${\beta}<{\zeta}<{\xi}<{\kappa}$ such that
$A^0_{\zeta}\cap S = A^0_{\xi}\cap S$, contradicting that
$\<\<A^0_{\alpha},A^1_{\alpha}\>:{\beta}\le {\alpha}<{\kappa}\>$
is dyadic on $S$. This proves ${\kappa}\le \mf c$.
\end{proof}

To obtain SAU spaces from $\circledast_{\kappa,{\mu}}$, we actually need
$\circledast_{\kappa,{\mu}}$-sequences with an extra property that, luckily,
we can get for free.

\begin{lemma}\label{tm:strongst}
If $\circledast_{\kappa,\mu}$ holds then
there is a $\circledast_{\kappa,{\mu}}$-sequence
\begin{align*}
 \<\<A^0_{\alpha},A^1_{\alpha}\>:{\alpha}<{\kappa}\>
\end{align*}
that, in addition to \ref{df:star}(A) and \ref{df:star}(B),
satisfies  condition (C) below as well:
\begin{enumerate}[(A)]
\item[(C)] every pair $\{x,y\}\in \br {\kappa}\times {\mu};2;$ is separated by
$\<A^0_{\alpha},A^1_{\alpha}\>$ for cofinally many $\alpha < \kappa$, that is
for any   ${\beta}<{\kappa}$
there is $\alpha \in \kappa \setm \beta$ such that
\begin{align*}
 \big|\{x,y\}\cap A^0_{\alpha}\big|=1.
\end{align*}
\end{enumerate}
\end{lemma}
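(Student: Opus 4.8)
The plan is to start from any $\circledast_{\kappa,\mu}$-sequence $\langle\langle A^0_\alpha,A^1_\alpha\rangle:\alpha<\kappa\rangle$ given by \ref{df:star}(A) and \ref{df:star}(B), and to encode each point by its \emph{trace} $f_z\in{}^\kappa 2$, where for $z\in\kappa\times\mu$ and every $\alpha$ large enough that $z\in\alpha\times\mu$ one sets $f_z(\alpha)=i$ iff $z\in A^i_\alpha$. Then $\langle A^0_\alpha,A^1_\alpha\rangle$ separates $\{x,y\}$ exactly when $f_x(\alpha)\ne f_y(\alpha)$, so (C) says precisely that the traces are \emph{pairwise cofinally different}: no two agree on a final segment of $\kappa$. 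Write $z\approx z'$ iff $f_z$ and $f_{z'}$ agree on a final segment (equivalently, $\{z,z'\}$ is separated only boundedly often). Pairs lying in \emph{different} $\approx$-classes already witness (C) for the given sequence, so the whole task is to repair the pairs inside a single $\approx$-class.

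The key reduction is that \emph{every $\approx$-class is finite}. First, $cf(\kappa)>\omega$: by Proposition \ref{*} we have $\mf s\le cf(\kappa)$, and it is well known that $\mf s>\omega$. Suppose some class contained a countably infinite $E=\{z_n:n<\omega\}$. Fixing $z_0$, for each $n$ there is $\gamma_n<\kappa$ with $f_{z_n}\restriction[\gamma_n,\kappa)=f_{z_0}\restriction[\gamma_n,\kappa)$; as $cf(\kappa)>\omega$ the ordinal $\gamma=\sup_n\gamma_n$ is $<\kappa$, and then for every $\alpha\ge\gamma$ all of $E$ lies on the single side $A^{f_{z_0}(\alpha)}_\alpha$, so $A^{1-f_{z_0}(\alpha)}_\alpha\cap E=\empt$. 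Applying \ref{df:star}(B) to $S=E$ gives a $\beta$; choosing $\alpha\ge\max(\gamma,\beta)$ and $\varepsilon=\{\langle\alpha,1-f_{z_0}(\alpha)\rangle\}$ makes $|E\cap A[\varepsilon]|=\omega$ impossible. Hence $\approx$-classes are finite.

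Next I would repair the bad (within-class) pairs by an XOR perturbation scheduled along an \emph{almost disjoint mod bounded} family of index sets. To each nontrivial class $E$ assign a cofinal $T_E\subseteq\kappa$, with the $T_E$ pairwise of bounded intersection (such a family of the needed size exists on $\kappa$, since $cf(\kappa)$ is regular uncountable and $\mu$ is small enough); put $F_\alpha=\bigcup\{\sigma^E_\alpha:\alpha\in T_E\}$ with $\sigma^E_\alpha\subseteq E$ chosen so that, over $\alpha\in T_E$, each pair of $E$ gets separated cofinally, and define the new partition by flipping membership on $F_\alpha$, i.e. $z\in B^0_\alpha$ iff $(z\in A^0_\alpha)\oplus(z\in F_\alpha)$. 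Property (B) survives: for an infinite $S$, which meets only countably many of the finite classes, one has $|F_\alpha\cap S|=\omega$ only for $\alpha$ in the $\limsup$ of countably many of the $T_E$, and this $\limsup$ is contained in a countable union of their pairwise (bounded) intersections, hence is bounded because $cf(\kappa)>\omega$; beyond that bound and beyond the old dyadic threshold of $S$ each $B[\varepsilon]\cap S$ differs from $A[\varepsilon]\cap S$ by only finitely much and so stays infinite. Since the flips live inside the finite classes, each trace is altered on just one $T_E$, so within-class pairs become separated cofinally on $T_E$, giving (C) for them.

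The step I expect to be the main obstacle is checking that these within-class flips do not destroy the cofinal separations already present \emph{across} classes, so that the perturbed traces are pairwise cofinally different for \emph{all} pairs simultaneously while (B) still holds. For $x\in E$, $x'\in E'$ with $E\ne E'$ the two flip-supports overlap only boundedly (as $T_E\cap T_{E'}$ is bounded), so the new difference equals the old one off a cofinal set and is merely toggled on $T_E\cup T_{E'}$; one must rule out that the old cofinal difference is exactly cancelled on a final segment. I would handle this by choosing the almost disjoint family and the bits $\sigma^E_\alpha$ generically relative to the finitely-many-per-point trace-difference functions — for instance reserving a cofinal set almost disjoint from all the $T_E$ on which every old across-class difference is witnessed — and this genericity, together with the finiteness of the classes, is exactly what makes simultaneous (B) and full (C) attainable.
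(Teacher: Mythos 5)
Your reduction is sound as far as it goes: your relation $\approx$ is exactly the equivalence $\equiv$ that the paper's own proof introduces, and your argument that every class is finite (using $\mathfrak{s}\le cf(\kappa)$ from Proposition \ref{*} to get $cf(\kappa)>\omega$) is correct, and even fills in a detail the paper leaves implicit. The problems are in the repair step, and they are genuine.

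First, the almost-disjoint-mod-bounded family $\{T_E\}$ you need does not exist in the required generality. You need one cofinal $T_E\subseteq\kappa$ per nontrivial class, with pairwise bounded intersections. Nothing bounds the number of nontrivial classes by $cf(\kappa)$ or even by $\kappa$: classes are finite, so there are $\kappa\cdot\mu$ classes altogether, and all of them can be nontrivial (duplicate every point of a given $\circledast_{\kappa,\mu}$-sequence via a pairing of $\mu$ with $\mu\times 2$; this preserves (A) and (B) and makes every class contain a never-separated pair). Definition \ref{df:star} places no bound on $\mu$ in terms of $\kappa$ — only $\mu\le 2^{\kappa}$ is forced — and Theorem \ref{tm:Cohen} produces $\circledast_{\omega_1,\mu}$ for arbitrarily large $\mu$, which is precisely the case in which the lemma gets applied. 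So your parenthetical justification ("$\mu$ is small enough") assumes away the main case: pairwise disjoint cofinal sets give only $cf(\kappa)$ many, and a family of \emph{more} than $\kappa$ cofinal subsets of $\kappa$ with pairwise bounded intersections is a weak-Kurepa-type object whose existence is an additional set-theoretic hypothesis (it holds for $\kappa=\omega_1$ under CH via the tree $2^{<\omega_1}$, but it is not a ZFC theorem and is certainly not supplied by $\circledast_{\kappa,\mu}$ itself). Second, the across-class interference, which you yourself flag as the main obstacle, is not resolved, and the proposed fix cannot work as stated: there is no single cofinal set "on which every old across-class difference is witnessed", because distinct pairs $x,y$ are separated on completely unrelated cofinal sets $D_{xy}=\{\alpha: f_x(\alpha)\ne f_y(\alpha)\}$, of which there can be $\kappa\cdot\mu$ many; moreover the given sequence may have arranged $D_{xy}\subseteq T_E$, in which case your flip set for $x$ must differ cofinally from every such $D_{xy}$ simultaneously — a system of up to $\kappa\cdot\mu$ constraints that no counting or "genericity" argument in the proposal actually meets.

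The paper avoids both issues by never perturbing relative to the old traces. For infinite $\mu$ it \emph{re-indexes} instead of flipping: take a transversal $X$ of the finite $\equiv$-classes that meets every column $\{\alpha\}\times\mu$ in $\mu$ points, fix bijections $f_\alpha:\{\alpha\}\times\mu\to X\cap(\{\alpha\}\times\mu)$, and pull the partitions back, $A^i_\alpha=f_\alpha^{-1}(X\cap B^i_\alpha)$; distinct points of $X$ lie in distinct classes, hence are already separated cofinally, and (A), (B), (C) all transfer through the bijections — no auxiliary family, no interference. For $\mu=1$, where re-indexing inside columns is impossible, it modifies only finitely many points per index, scheduled by an injection $f:\kappa^3\to\kappa$ with $\max\{\zeta,\xi\}<f(\zeta,\xi,\eta)$: at $\alpha=f(\zeta,\xi,\eta)$ one puts $\zeta$ into $A^0_\alpha$ and $\xi$ out of it, so each pair is separated \emph{outright} at cofinally many indices (no dependence on old traces, hence nothing to cancel), while (B) survives because $|A^i_\alpha\bigtriangleup B^i_\alpha|\le 2$. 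Your flipping idea is closest in spirit to this $\mu=1$ case; the absolute (rather than relative) flips and the two-case split are exactly what your proposal is missing.
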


\begin{proof}
Let us start by fixing a $\circledast_{\kappa,{\mu}}$-sequence
\begin{align*}
 \<\<B^0_{\alpha},B^1_{\alpha}\>:{\alpha}<{\kappa}\>.
\end{align*}

Case 1: ${\mu}=1$.
We fix an injective map $f:{\kappa}\times {\kappa}\times {\kappa} \to {\kappa}$
such that $\max\{\zeta,\xi\} < f(\zeta,\xi,\eta)$
and then put
\begin{align*}
A^0_{\alpha}= \left\{
\begin{array}{ll}
(B^0_{\alpha}\cup\{{\zeta}\})\setm \{{\xi}\}&\text{if ${\alpha}=f({\zeta},{\xi},\eta)$
for some ${\zeta},{\xi},\eta<{\kappa}$,}\\\\
B^0_{\alpha}&\text{otherwise,}
\end{array}
\right .
\end{align*}
and
\begin{align*}
 A^1_{\alpha}=({\alpha}\times \{0\})\setm A^0_{\alpha}.
\end{align*}
Then $|A^i_{\alpha}\bigtriangleup B^i_{\alpha}|\le 2$ implies that property (B) is preserved,
and (C) holds because for any $\langle \zeta,\xi \rangle \in \kappa \times \kappa$ there
are cofinally many $\alpha < \kappa$ with  ${\alpha}=f({\zeta},{\xi},\eta)$.

Case 2: ${\mu}\ge {\omega}$.
For any
$a,b\in {\kappa}\times {\mu}$ we define
\begin{align*}
 a\equiv b\ \,\Leftrightarrow\,
\exists\, {\gamma < \kappa\,}\, \forall\, {\zeta}\in {\kappa}\setm {\gamma}
 \ ( a\in B^0_{\zeta} \Longleftrightarrow
b\in B^0_{\zeta} )\,.
 \end{align*}
Then  $\equiv$ is clearly an equivalence relation and property (B) implies that
every  equivalence class of $\equiv$ is finite as any
infinite subset of ${\kappa}\times {\mu}$ is split by $B^0_{\alpha}$ eventually.

This clearly implies that
if  $X\subs  {\kappa}\times {\mu}$
contains exactly one element from each $\equiv$-equivalence class
then  $ |X\cap (\{{\alpha}\}\times {\mu})|={\mu}$
for all ${\alpha}<{\kappa}$.
Pick such an $X$ and, for every $\alpha < \kappa$, fix a bijection
$$f_\alpha : \{\alpha\} \times {\mu}\to X\cap (\{{\alpha}\}\times {\mu}).$$

Now it is obvious that if we put $A^i_{\alpha}= f_\alpha^{-1}(X\cap B^{\alpha}_i)$,
then the sequence $\<\<A^0_{\alpha},A^1_{\alpha}\>:{\alpha}<{\kappa}\>$
has all the three properties (A), (B), and (C).
\end{proof}

A $\circledast_{\kappa, \mu}$-sequence with the additional property (C)
will be called a strong $\circledast_{\kappa, \mu}$-sequence.
These, as we shall now show, yield us spaces with very strong SAU properties.

\begin{theorem}\label{tm:star}
If $\<\<A^0_{\alpha},A^1_{\alpha}\>:{\alpha}<{\kappa}\>$ is a
strong $\circledast_{\kappa, \mu}$-sequence then
there is a separable and crowded Hausdorff topology $\tau$ on
${\kappa}\times {\mu}$  such that
for every infinite set $S \subs X$ there is ${\alpha}<{\kappa}$
for which  $$\overline{S} \supset (\kappa \setm {\alpha})\times {\mu} .$$
In particular, if ${\mu}<{\kappa}$, then
\begin{align}\label{eq:largeF1}
\text{ $|(\kappa \times \mu) \setm \overline S|< \kappa$ for all infinite $S\subs \kappa \times \mu\,$.}
\end{align}
\end{theorem}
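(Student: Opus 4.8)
The plan is to turn the partition sequence into a system of binary coordinates and to topologize $X=\kappa\times\mu$ as a subspace of a Cantor-cube-like space. For $x\in\kappa\times\mu$ whose first coordinate is $\gamma$ and for every $\alpha$ with $\gamma<\alpha<\kappa$, condition \ref{df:star}(A) places $x$ into exactly one of $A^0_\alpha,A^1_\alpha$ (since then $x\in\alpha\times\mu$); let $e(x)(\alpha)\in 2$ record which one, so that $e(x)$ is a function on $(\gamma,\kappa)$. With $A[\varepsilon]$ as defined in the paper one has $x\in A[\varepsilon]$ iff $\varepsilon\subseteq e(x)$, and by \ref{df:star}(A) the sets $A[\varepsilon]$, $\varepsilon\in Fn(\kappa,2)$, are closed under finite intersection ($A[\varepsilon]\cap A[\delta]=A[\varepsilon\cup\delta]$ for compatible $\varepsilon,\delta$, and $=\empt$ otherwise, since $A^0_\zeta\cap A^1_\zeta=\empt$). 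I would therefore take $\{A[\varepsilon]:\varepsilon\in Fn(\kappa,2)\}$ as a base of $\tau$; then $\{A[\varepsilon]:\varepsilon\subseteq e(x)\}$ is a neighbourhood base at each $x$.

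Hausdorffness is exactly what property (C) is for: given $x\neq y$, I choose via (C) some $\alpha$ larger than both first coordinates with $|\{x,y\}\cap A^0_\alpha|=1$; then $e(x)(\alpha)\neq e(y)(\alpha)$, so the basic open sets $A^{e(x)(\alpha)}_\alpha$ and $A^{e(y)(\alpha)}_\alpha$ are disjoint neighbourhoods of $x$ and $y$. Note that this is the one place where the \emph{cofinality} in (C), rather than mere separation at a single coordinate, is convenient, since it guarantees a separating index above both first coordinates.

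The heart of the proof is the displayed closure property, and it falls out of (B) almost immediately. Given an infinite $S\subseteq\kappa\times\mu$, use \ref{df:star}(B) to fix $\beta<\kappa$ so that the tail sequence is dyadic on $S$. I claim $(\kappa\setminus\beta)\times\mu\subseteq\overline S$. Indeed, if $q$ has first coordinate $\gamma\ge\beta$, then every basic neighbourhood of $q$ is some $A[\varepsilon]$ with $\varepsilon\subseteq e(q)$, whence $\dom\varepsilon\subseteq(\gamma,\kappa)\subseteq\kappa\setminus\beta$; dyadicity gives $|S\cap A[\varepsilon]|=\omega$, in particular $A[\varepsilon]\cap S\neq\empt$, so $q\in\overline S$. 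This proves the statement with $\alpha=\beta$, and the ``in particular'' clause follows since for $\mu<\kappa$ the complement $(\kappa\times\mu)\setminus\overline S$ is contained in $\beta\times\mu$, of cardinality $\le|\beta|\cdot\mu<\kappa$.

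What remains are separability and crowdedness, and this is where I expect the real work to lie. Applying (B) to $S=\kappa\times\mu$ produces a single $\beta_0$ such that $A[\varepsilon]$ is infinite for every $\varepsilon\in Fn(\kappa\setminus\beta_0,2)$; hence every point with first coordinate $\ge\beta_0$ has all of its basic neighbourhoods infinite and is therefore non-isolated. The difficulty is that (B) gives no control over cells $A[\varepsilon]$ whose domain dips below $\beta_0$: for the ``low'' points one must separately rule out singleton basic open sets (crowdedness) and, more seriously, exhibit a countable set meeting every non-empty cell (separability). This is precisely where the hypothesis $\kappa\le\mf c$ from Proposition \ref{*} has to be exploited — morally, one wants to realize the coordinate family over a countable set — and I expect this control of the low cells to be the main obstacle, in contrast to the cofinal-closure property above, which is the clean and immediate part.
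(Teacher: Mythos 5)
You are right to flag separability and crowdedness as the unresolved part, but the situation is worse than ``remaining work'': for the topology you chose they can genuinely \emph{fail}, so no further argument can close the gap without changing the construction. The problem is that you admit \emph{all} cells $A[\varepsilon]$, $\varepsilon\in Fn({\kappa},2)$, as basic open sets, while properties \ref{df:star}(B) and (C) are purely tail/cofinal conditions and say nothing about cells touching low coordinates. Concretely, start from any strong $\circledast_{\kappa,\mu}$-sequence and redefine the single pair at coordinate ${\omega}$ by $A^0_{\omega}=\{x_0\}$ and $A^1_{\omega}=({\omega}\times{\mu})\setm\{x_0\}$ for some fixed $x_0\in {\omega}\times {\mu}$: condition (A) still holds, (B) survives because for any $S$ one may simply raise the threshold $\beta$ above ${\omega}$, and (C) survives because it only asks for cofinally many separating coordinates. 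Yet in your topology $A^0_{\omega}=\{x_0\}$ is a basic open singleton, so $x_0$ is isolated and the space is not crowded. Thus the missing idea is not extra control over the low cells (which is impossible in general) but a different topology that never sees them.

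The paper's construction makes exactly this move: it discards all cells below a threshold and builds the countable dense set into the space rather than hunting for it afterwards. Put $Q={\omega}\times\{0\}$, fix a topology $\rho$ on $Q$ homeomorphic to $\mbb Q$ and a countable base $\mc B$ of $\<Q,\rho\>$. By Proposition \ref{*} we have $cf({\kappa})\ge \mf s>{\omega}$, so there is a single ${\beta}<{\kappa}$ such that the tail $\<\<A^0_{\alpha},A^1_{\alpha}\>:{\beta}\le{\alpha}<{\kappa}\>$ is dyadic on \emph{every} $B\in\mc B$; then $\tau$ is generated by $\mc B\cup\{A^i_{\alpha}: i<2,\ {\beta}\le{\alpha}<{\kappa}\}$. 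Now every non-empty basic $\tau$-open set contains some $B\cap A[\varepsilon]$ with $B\in\mc B$ and $\varepsilon\in Fn({\kappa}\setm{\beta},2)$, which is infinite by the choice of ${\beta}$; this yields density of $Q$ (separability) and crowdedness in one stroke. Your Hausdorff argument transfers verbatim, since the cofinality in (C) supplies a separating coordinate that is simultaneously above ${\beta}$ and above both first coordinates, and your closure argument also goes through for this coarser topology: after increasing the ${\alpha}$ given by (B) above ${\omega}$, every point of $({\kappa}\setm{\alpha})\times{\mu}$ lies outside $Q$, so its basic neighbourhoods are exactly the cells $A[\varepsilon]$ with $\dom(\varepsilon)$ above its first coordinate (hence above ${\alpha}$), and dyadicity on $S$ applies. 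Your closing intuition about ``realizing the coordinates over a countable set'' points in this direction, but the realization is by fiat --- planting a copy of $\mbb Q$ and coarsening the topology --- not by combinatorial control of the low cells.
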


\begin{proof}
Let $\rho$ be a
topology on $Q = \omega \times \{0\}$ such that $\<Q,\rho\>$ is homeomorphic to
$\mbb Q$ with its usual topology and fix  a countable base $\mc B$ of
$\<Q,\rho\>$.  Since $cf({\kappa})> {\omega}$, we can pick
${\beta} <{\kappa}$ such that, for all $B\in \mc B$,
$\<\<A^0_{\alpha},A^1_{\alpha}\>:{\beta}\le {\alpha}<{\kappa}\>$
is dyadic on $B$.

Our topology $\tau$ is generated by
\begin{align}
\mc B \cup
\{A^0_{\alpha} : {\alpha}\in {\kappa}\setm {\beta}\}
\cup
\{A^1_{\alpha} : {\alpha}\in {\kappa}\setm {\beta}\}.
\end{align}
Then $\tau$ is Hausdorff  by \ref{tm:strongst}.(C).
The choice of $\beta$ implies that $Q$ is both dense and crowded with
respect to $\tau$, hence $\tau$ is separable and crowded.

For every $S\in \br X;{\omega};$ there is
${\alpha} \in \kappa \setm \beta$ such that
$\langle \<A^0_{\zeta}, A^1_{\zeta} \rangle : {\zeta}\in {\kappa}\setm {\alpha}\>$
is dyadic on $S$. This clearly implies  $({\kappa}\setm {\alpha})\times {\mu}\subs \overline S$.
\end{proof}

The above space $X = \langle \kappa \times \mu , \tau \rangle$ trivially has the following very strong
SAU property: any family $\mathcal{A} \in [\clp (X) ]^{< cf(\kappa)}$ has non-empty intersection.

Now we turn to examining the consistency of the principles $\circledast_{\kappa, \mu}$.
As it will turn out, many instances of them hold true in a generic extension obtained by
adding a lot of Cohen reals to an arbitrary ground model. To see this, we need the following
easy but technical lemma. First we fix some notation:

\begin{definition}\label{def:Cohen}
Assume that the cardinals ${\kappa},\,{\mu}$ and ${\alpha}\in {\kappa}$ are given.
Let $\mc G$ be $Fn(({\kappa}\setm {\alpha})\times {\kappa}\times {\mu},2)$-generic over $V$.
Then in $V[\mc G]$ we put $g=\bigcup \mc G$ and define the sequence
\begin{align*}
\mc A_{\alpha}^{\mc G}=\<\<A^0_{\beta},A^1_{\beta} \>: {\alpha}\le {\beta}<{\kappa}\>
\end{align*}
by
\begin{align*}
A^i_{\beta}=\{\langle \xi,\zeta \rangle \in {\beta}\times {\mu}: g({\beta},\xi,\zeta)=i\}.
\end{align*}
\end{definition}

\begin{lemma}\label{lm:cohen}
Assume that  $\mc G$ is
$Fn(({\kappa}\setm {\alpha})\times {\kappa}\times {\mu},2)$-generic over $V$.
Then $\mc A_{\alpha}^{\mc G}$ is dyadic on each
$S\in V \cap \br {\alpha}\times {\mu};{\omega};$.
\end{lemma}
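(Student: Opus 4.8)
The plan is to prove this by a routine density argument for Cohen forcing, whose whole content is an unwinding of the definitions. First I would record that both the data live in the ground model: $S \in V$ by hypothesis, and any relevant pattern $\varepsilon \in Fn(\kappa \setm \alpha, 2)$ is a \emph{finite} function between ground-model ordinals, hence $\varepsilon \in V$ as well. This guarantees that every dense set I construct below is a member of $V$, so that the $V$-generic $\mc G$ is forced to meet it. The decisive reduction is the following. Fix $\varepsilon \in Fn(\kappa \setm \alpha, 2)$ with $\dom(\varepsilon) = \{\beta_1,\dots,\beta_n\} \subs \kappa \setm \alpha$, so that $A[\varepsilon] = \bigcap_{j\le n} A^{\varepsilon(\beta_j)}_{\beta_j}$. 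For a point $\langle \xi,\zeta\rangle \in S \subs \alpha \times \mu$ we have $\xi < \alpha \le \beta_j$ for every $j$, so the clause ``$\langle \xi,\zeta\rangle \in \beta_j \times \mu$'' in the definition of $A^i_{\beta_j}$ holds automatically; therefore
\[
\langle \xi,\zeta\rangle \in A[\varepsilon] \iff g(\beta_j,\xi,\zeta) = \varepsilon(\beta_j) \text{ for all } j \le n .
\]
Thus membership in $S \cap A[\varepsilon]$ is controlled purely by finitely many values of the generic function $g = \bigcup \mc G$.

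The heart of the matter is then a density claim: for every $m < \omega$ the set
\[
D_{\varepsilon,m} = \big\{ p \in Fn((\kappa\setm\alpha)\times\kappa\times\mu,2) : p \Vdash |S \cap A[\varepsilon]| \ge m \big\}
\]
is dense. Given any condition $p$, since $\dom(p)$ is finite at most $|\dom(p)|$ many points $\langle \xi,\zeta\rangle$ can have some triple $\langle \beta_j,\xi,\zeta\rangle$ already in $\dom(p)$; as $S$ is infinite I may pick $m$ points $\langle \xi,\zeta\rangle \in S$ avoiding these finitely many ``blocked'' ones, and extend $p$ to $q$ by declaring $g(\beta_j,\xi,\zeta) = \varepsilon(\beta_j)$ for each chosen point and each $j \le n$. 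The distinctness of the $\beta_j$ and the freshness of the chosen coordinates guarantee that no conflicts arise, so $q$ is a legitimate extension of $p$ forcing the $m$ chosen points into $S \cap A[\varepsilon]$ by the equivalence above. This proves density.

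Finally, since each $D_{\varepsilon,m}$ lies in $V$ and $\mc G$ is $V$-generic, $\mc G$ meets every $D_{\varepsilon,m}$, whence $|S \cap A[\varepsilon]| = \omega$ holds in $V[\mc G]$ for each $\varepsilon \in Fn(\kappa\setm\alpha,2)$; this is precisely the assertion that $\mc A^{\mc G}_{\alpha}$ is dyadic on $S$. I do not anticipate any genuine obstacle here: the only point that requires care is the bookkeeping in the density step, namely verifying that one can always find fresh points of $S$ whose relevant triples miss the finite working part of the condition, and this is immediate from the finiteness of the conditions against the infinitude of $S$.
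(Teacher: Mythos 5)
Your proposal is correct and follows essentially the same route as the paper: a density argument in which, given a condition $p$, one picks points of $S$ whose relevant triples avoid the finite set $\dom(p)$ and extends $p$ to set $g(\beta,\xi,\zeta)=\varepsilon(\beta)$ on $\dom(\varepsilon)$, thereby forcing those points into $S\cap A[\varepsilon]$. The only cosmetic difference is bookkeeping: the paper fixes a finite $T\subs S$ and forces one new point of $S\setm T$ into $A[\varepsilon]$ below an arbitrary condition (concluding $1\Vdash |S\cap A[\varepsilon]|=\omega$), whereas you package the same extension step into dense sets $D_{\varepsilon,m}$ and invoke genericity; these are equivalent formulations of one and the same argument.
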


\begin{proof}
Fix $S\in V \cap \br {\alpha}\times {\mu};{\omega};$,
$\varepsilon \in Fn({\kappa}\setm {\alpha},2)$, and $T \in [S]^{<\omega}$.
For any condition $p\in Fn(({\kappa}\setm {\alpha})\times {\kappa}\times {\mu},2)$
we can find  $\<\xi,\eta\> \in S\setm T$ such that
\begin{align*}
 (\dom(\varepsilon)\times \{\<\xi,\eta\>\})\cap \dom (p)=\empt.
\end{align*}
Then we can find a condition  $q\supset p$ such that for each
${\zeta}\in \dom(\varepsilon)$ we have
 \begin{align*}
 q({\zeta}, \xi, \eta)=\varepsilon({\zeta}),
\end{align*}
hence
\begin{align*}
 q\Vdash \<\xi,\eta\>\in \mc A_{\alpha}^{\mc G}[\varepsilon]\cap (S\setm T).
\end{align*}
Since $p$  and  $T$
were arbitrary, this implies
\begin{align*}
 1 \Vdash \mc A_{\alpha}^{\mc G}[\varepsilon]\cap S\text{ is infinite},
\end{align*}
hence, as $\varepsilon$ was also arbitrary,
\begin{align*}
 1\Vdash \text{$\mc A_{\alpha}^{\mc G}$
is dyadic on $S$.}
\end{align*}
\end{proof}

\begin{theorem}\label{tm:Cohen}
If we add $\lambda > \omega$ many Cohen reals to any ground model then
in the extension $\circledast_{\omega_1,\mu}$ holds for any uncountable
$\mu \le \lambda $.
\end{theorem}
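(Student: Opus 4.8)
The plan is to establish $\circledast_{\omega_1,\mu}$ in the Cohen extension by exhibiting an explicit $\circledast$-sequence built from the generic function and then verifying properties (A) and (B) of Definition \ref{df:star}. Writing $\mc G$ for the $Fn(\omega_1 \times \omega_1 \times \mu, 2)$-generic (taking $\alpha = 0$ in Definition \ref{def:Cohen}, so the full index set is used), I would simply take the candidate sequence to be $\mc A_0^{\mc G} = \langle \langle A^0_\beta, A^1_\beta \rangle : \beta < \omega_1 \rangle$ from Definition \ref{def:Cohen}. Property (A) is immediate from the definition, since for each $\beta$ the sets $A^0_\beta$ and $A^1_\beta$ partition $\beta \times \mu$ according to whether $g(\beta, \xi, \zeta)$ equals $0$ or $1$. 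So the entire content of the theorem is property (B): for every $S \in [\omega_1 \times \mu]^\omega$ in the extension there is some $\beta < \omega_1$ past which the sequence is dyadic on $S$.

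First I would reduce to the case of ground-model sets. The key point is that $Fn(\omega_1 \times \omega_1 \times \mu, 2)$ has the countable chain condition, so any countable set $S \in [\omega_1 \times \mu]^\omega$ appearing in $V[\mc G]$ is already contained in a \emph{ground-model} countable set. More precisely, a name for a countable $S$ is decided by countably many antichains, each countable by ccc, so all relevant conditions live in a countable suborder; hence there is a countable $\widetilde S \in V \cap [\omega_1 \times \mu]^\omega$ with $S \subs \widetilde S$. Since being dyadic on a superset implies being dyadic on the subset, it suffices to find the bound $\beta$ for $\widetilde S$. This is exactly where Lemma \ref{lm:cohen} is meant to be applied, and it is the crux of the whole argument.

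The remaining issue is that Lemma \ref{lm:cohen} guarantees dyadicity of $\mc A_\alpha^{\mc G}$ on sets $S \subs \alpha \times \mu$, whereas a ground-model $\widetilde S \subs \omega_1 \times \mu$ need not sit below any fixed $\alpha$. So I would next choose $\beta < \omega_1$ large enough that $\widetilde S \subs \beta \times \mu$; this is possible precisely because $\widetilde S$ is countable and $\mathrm{cf}(\omega_1) = \omega_1 > \omega$, so the first coordinates of its elements are bounded below $\omega_1$. With this $\beta$ in hand, I would observe that $\widetilde S \in V \cap [\beta \times \mu]^\omega$, and that the tail sequence $\langle \langle A^0_\gamma, A^1_\gamma \rangle : \beta \le \gamma < \omega_1 \rangle$ is exactly $\mc A_\beta^{\mc G}$ in the notation of Definition \ref{def:Cohen}, after noting that forcing with the full $Fn(\omega_1 \times \omega_1 \times \mu,2)$ factors as $Fn(\beta \times \omega_1 \times \mu, 2) \times Fn((\omega_1 \setm \beta)\times \omega_1 \times \mu,2)$ and that $\widetilde S$ together with the tail generic lives in the extension by the second factor over the first. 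Then Lemma \ref{lm:cohen} applied with $\alpha = \beta$ yields that $\mc A_\beta^{\mc G}$ is dyadic on $\widetilde S$, which is precisely condition \ref{df:star}(B) for $S$.

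The main obstacle I anticipate is the bookkeeping in this last reduction: one must be careful that the dyadicity witnessed by Lemma \ref{lm:cohen} over the relevant factor forcing genuinely transfers to dyadicity computed in the full extension $V[\mc G]$, and that the product decomposition of the Cohen forcing is set up so that $\widetilde S$ (which uses only coordinates with first component $< \beta$) is available in the ground model of the tail forcing. Once the ccc-based reduction to a bounded ground-model set is cleanly stated, the verification of (A) and the transfer of (B) are routine. I would present the argument as: fix the candidate sequence, check (A) trivially, then for (B) invoke ccc to find $\widetilde S \supseteq S$ in $V$, bound it below some $\beta < \omega_1$, and quote Lemma \ref{lm:cohen} to finish.
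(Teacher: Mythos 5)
Your reduction to a ground-model set rests on a monotonicity claim that is backwards, and this is fatal to the argument as written. Being dyadic on a set means the intersections $S\cap A[\varepsilon]$ are \emph{infinite}; this property passes \emph{upward} from a set to its supersets, not downward to subsets. So knowing that some tail $\langle\langle A^0_\gamma,A^1_\gamma\rangle:\beta\le\gamma<\omega_1\rangle$ is dyadic on the ground-model cover $\widetilde S\supseteq S$ tells you nothing about $S$. Concretely: working in $V[\mc G]$, let $\gamma\ge\beta$ and put $S=\widetilde S\cap A^0_\gamma$. This is an infinite set (by dyadicity on $\widetilde S$ with $\varepsilon=\{(\gamma,0)\}$) covered by $\widetilde S$, yet $S\cap A^1_\gamma=\emptyset$, so the tail starting at $\beta$ is \emph{not} dyadic on $S$; any bound witnessing (B) for this $S$ must lie above $\gamma$. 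The paper's proof avoids this by capturing $S$ \emph{itself}, not a cover: your own nice-name observation shows the name of $S$ involves only countably many conditions, hence only coordinates whose first component is below some $\alpha<\omega_1$, so $S\in V[\mc G\cap Fn(\alpha\times\omega_1\times\lambda,2)]$. One then applies Lemma \ref{lm:cohen} with this \emph{intermediate model} as the ground model (the tail generic $\mc G\cap Fn((\omega_1\setm\alpha)\times\omega_1\times\lambda,2)$ is generic over it by the product lemma), obtaining dyadicity on $S$ directly. The lemma's density argument needs the model over which the tail is generic to already contain $S$; a cover cannot substitute for this.

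A secondary gap: you force with $Fn(\omega_1\times\omega_1\times\mu,2)$, which for uncountable $\mu<\lambda$ is a proper factor of the $\lambda$-Cohen forcing (its index set has size $\mu$). Your argument would then only establish $\circledast_{\omega_1,\mu}$ in the extension by $\mu$ Cohen reals, whereas the theorem concerns the extension by all $\lambda$ of them, where new countable subsets of $\omega_1\times\mu$ appear. The paper disposes of this at the outset: it suffices to prove $\circledast_{\omega_1,\lambda}$ (here the identification of the full forcing with $Fn(\omega_1\times\omega_1\times\lambda,2)$ is legitimate, since $|\omega_1\times\omega_1\times\lambda|=\lambda$), because restricting a $\circledast_{\omega_1,\lambda}$-sequence to $\omega_1\times\mu$, i.e.\ replacing each $A^i_\beta$ by $A^i_\beta\cap(\beta\times\mu)$, yields a $\circledast_{\omega_1,\mu}$-sequence for every uncountable $\mu\le\lambda$. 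You should add both repairs: first reduce to $\mu=\lambda$, then replace the covering step by the intermediate-model capture.
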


\begin{proof}
Clearly, it suffices to prove that $\circledast_{\omega_1,\lambda}$ holds in the extension.
Since $|\omega_1 \times \omega_1 \times \lambda| = \lambda$, we may assume that our generic
extension is of the form $V[G]$ where $G$ is $Fn(\omega_1 \times \omega_1 \times \lambda\,,2)$-generic
over $V$.

Now, in $V[G]$, putting  $g = \bigcup G$ we define for $\beta < \omega_1$ and $i < 2$
\begin{align*}
A^i_{\beta}=\{\langle \xi,\zeta \rangle \in {\beta}\times {\lambda}: g({\beta},\xi,\zeta)=i\},
\end{align*}
and we claim that
\begin{align*}
 \<\<A^0_{\beta},A^1_{\beta}\>:{\beta} \in {\omega_1}\>
\end{align*}
is a $\circledast_{\omega_1,\lambda}$-sequence.

Then (A) is obvious and to check (B) consider any $S \in [\omega_1 \times \lambda]^\omega$.
It is well-known, however, that there is some $\alpha < \omega_1$ such that
$S \in V[G \cap Fn(\alpha \times \omega_1 \times \lambda,2)]$. From this,
applying lemma \ref{lm:cohen} to the ground model $V[G \cap Fn(\alpha \times \omega_1 \times \lambda,2)]$,
we may immediately deduce that the tail sequence $\<\<A^0_{\beta},A^1_{\beta}\>:{\beta} \in {\omega_1} \setm \alpha\>$
is dyadic on $S$.

Actually, it is easy to see using genericity that $\<\<A^0_{\beta},A^1_{\beta}\>:{\beta} < {\omega_1}\>$
is a strong $\circledast_{\omega_1,\lambda}$-sequence.

\end{proof}

\begin{corollary}
If we add $\lambda > \omega$ many Cohen reals to our ground model then
in the extension for every cardinal $\mu \in [\omega_1, \lambda]$
there is a SAU space of size $\mu$.
\end{corollary}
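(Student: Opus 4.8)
The plan is to assemble the corollary directly from Theorems \ref{tm:Cohen} and \ref{tm:star} together with Lemma \ref{tm:strongst}; no new combinatorics is required. So fix any cardinal $\mu \in [\omega_1, \lambda]$ and observe that every such $\mu$ is necessarily uncountable. First I would apply Theorem \ref{tm:Cohen} to conclude that, in the Cohen extension obtained by adding $\lambda$ reals, $\circledast_{\omega_1, \mu}$ holds.

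Next I would pass from $\circledast_{\omega_1, \mu}$ to a \emph{strong} $\circledast_{\omega_1, \mu}$-sequence, which is exactly what Lemma \ref{tm:strongst} provides; alternatively, one may read it off directly from the last sentence of the proof of Theorem \ref{tm:Cohen}, which already yields a strong sequence. Feeding this strong $\circledast_{\omega_1, \mu}$-sequence into Theorem \ref{tm:star} (with $\kappa = \omega_1$, whose cofinality is uncountable, as that theorem requires) produces a separable, crowded, Hausdorff topology $\tau$ on the set $\omega_1 \times \mu$ with the key property that the closure of every infinite subset contains a final segment $(\omega_1 \setm \alpha) \times \mu$ for some $\alpha < \omega_1$.

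It then remains to verify that $X = \langle \omega_1 \times \mu, \tau \rangle$ is an SAU space of size $\mu$. The cardinality is immediate: since $\mu \ge \omega_1$ we have $|\omega_1 \times \mu| = \mu$. For the SAU conditions, crowdedness gives $X' = X$, so in particular $|X'| > 1$. For the intersection property, let $F_0, F_1 \in \clp(X)$; each $F_i$ contains an infinite set and hence, by the property above, contains a final segment $(\omega_1 \setm \alpha_i) \times \mu$. Setting $\gamma = \max\{\alpha_0, \alpha_1\} < \omega_1$, the nonempty set $(\omega_1 \setm \gamma) \times \mu$ lies in $F_0 \cap F_1$, so any two infinite closed sets meet. (Equivalently, one may simply quote the strong SAU property recorded in the remark following Theorem \ref{tm:star}, specialized to $cf(\omega_1) = \omega_1$.) Hence $X$ is SAU, completing the proof.

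I expect no genuine obstacle beyond the bookkeeping, since the substantive work lives in Theorem \ref{tm:Cohen} (the forcing argument producing the sequence) and Theorem \ref{tm:star} (turning the sequence into a topology). The only points that demand a moment's care are: (a) confirming that every $\mu$ in the closed interval $[\omega_1, \lambda]$ is uncountable, so that Theorem \ref{tm:Cohen} applies and $|\omega_1 \times \mu| = \mu$; and (b) noting that the two clauses in the definition of SAU — at least two non-isolated points, and pairwise intersection of infinite closed sets — are supplied respectively by crowdedness and by the final-segment closure property, the latter relying on the trivial fact that any two final segments of $\omega_1$ overlap.
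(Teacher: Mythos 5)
Your proposal is correct and follows exactly the route the paper intends: the corollary is stated without proof precisely because it is the immediate concatenation of Theorem \ref{tm:Cohen} (giving $\circledast_{\omega_1,\mu}$ for uncountable $\mu\le\lambda$), Lemma \ref{tm:strongst} (upgrading to a strong sequence), and Theorem \ref{tm:star} with $\kappa=\omega_1$ (whose final-segment closure property yields the SAU intersection property, as noted in the remark following it). Your verification of the bookkeeping points --- that $\mu\ge\omega_1$ ensures both applicability of Theorem \ref{tm:Cohen} and $|\omega_1\times\mu|=\mu$, and that crowdedness supplies $|X'|>1$ --- is exactly what the paper leaves to the reader.
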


Theorem \ref{tm:Cohen} and proposition \ref{*} imply the well-known and trivial fact
that $\mf s = \omega_1$ holds in a generic extension obtained by adding
uncountably many Cohen reals. Hence by the the above
corollary it is consistent to have a gap of any possible size
between $\mf s = \omega_1$ and $\mf c$ and to have
SAU spaces of all cardinalities between $\mf s$ and $\mf c$.
Our next theorem implies the consistency of the analogous statement with $\mf s > \omega_1$.

\begin{theorem}\label{tm:sAU-size}
Assume that  $GCH$ holds and ${\nu}\le {\lambda}$ are  uncountable cardinals
in our ground model $\,V$ such that $\nu$ is regular and $cf(\lambda) > \omega$.
Then there is a CCC, hence cardinal and cofinality preserving,
forcing notion $P$ such that in the extension $V^P$ we have
$\mf s={\nu}$, $\mf c = \lambda$,  moreover  $\circledast_{{\kappa},{\mu}}$ holds whenever
${\nu}\le {\kappa}=cf({\kappa})\le {\lambda}$  and either  ${\omega}\le {\mu}\le {\lambda}$
or  ${\mu}=1$.
\end{theorem}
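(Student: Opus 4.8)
The plan is to build $P$ as a \emph{finite–support}, hence CCC, forcing iteration of length (essentially) $\lambda$ that does two jobs at once: it adds, for every regular $\kappa\in[\nu,\lambda]$ and every admissible $\mu$, a generic sequence of the form described in Definition \ref{def:Cohen}, and, by a bookkeeping device, it diagonalizes every family of fewer than $\nu$ infinite subsets of $\omega$. Each single step will be either a Cohen piece $Fn(\cdot,2)$ (contributing the ``slices'' $g(\beta,\cdot,\cdot)$ that make up a $\circledast_{\kappa,\mu}$-sequence) or a Mathias--Prikry step relative to a filter (contributing an \emph{unsplit} real); both kinds are $\sigma$-centered, so finite support keeps the whole iteration CCC. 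Since the ground model satisfies GCH and $cf(\lambda)>\omega$, a routine nice-name count then gives $|P|=\lambda$ and $\mf c=\lambda$ in $V^P$.

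For the $\circledast_{\kappa,\mu}$ part, the slices for a fixed $\kappa$ are interleaved throughout the iteration rather than thrown in as one block: adding them all at once as a Cohen product would supply an $\omega_1$-sized splitting family and pin $\mf s$ at $\omega_1$, whereas interleaving lets the unsplit-real steps (below) neutralize that. Because $cf(\kappa)>\omega$, any $S\in[\kappa\times\mu]^\omega$ is captured at a bounded substage, so applying Lemma \ref{lm:cohen} to the tail of slices that is Cohen-generic over a model containing $S$ yields condition (B) of Definition \ref{df:star}; condition (A) is immediate from the definition of the $A^i_\beta$, and (C) comes for free by Lemma \ref{tm:strongst}. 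Thus every required $\circledast_{\kappa,\mu}$ holds in $V^P$, and applying Proposition \ref{*} at $\kappa=\nu$ already gives the upper bound $\mf s\le cf(\nu)=\nu$. (The prescribed SAU spaces are then delivered by Theorem \ref{tm:star}.)

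The substantive half is the lower bound $\mf s\ge\nu$, i.e. that \emph{no} family $\mathcal A$ with $|\mathcal A|<\nu$ splits $\omega$. Here the bookkeeping is arranged so that every such $\mathcal A$ occurring in $V^P$ — in particular every $<\nu$-sized family of Cohen slices, which is exactly what threatens to collapse $\mf s$ — is anticipated at some stage and is then met by a $\sigma$-centered Mathias--Prikry step adding a real $B$ that is almost contained in, or almost disjoint from, each member of $\mathcal A$. Since the members of $\mathcal A$ are frozen once $B$ is added, $B$ stays unsplit by $\mathcal A$ forever; it is harmless that later Cohen slices split $B$ itself, because those slices do not belong to $\mathcal A$. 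Crucially we diagonalize only families of size strictly below $\nu$, so the $\nu$-sized splitting family produced by $\circledast_{\nu,\mu}$ is never destroyed and caps $\mf s$ at exactly $\nu$.

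The main obstacle I foresee is precisely this reconciliation, complicated by the permitted case $cf(\lambda)<\nu$. When $cf(\lambda)<\nu$, a $<\nu$-sized family of reals need not reflect to a bounded stage of a plain length-$\lambda$ iteration, so a one-dimensional bookkeeping cannot catch every small family by a mere cofinality count. This is what forces a two-dimensional, \textbf{matrix-style} iteration — of ``height'' $\nu$ and ``width'' $\lambda$ — in which the vertical direction manufactures the unsplit reals and secures $\mf s\ge\nu$ through a reflection argument internal to the matrix, while the horizontal Cohen direction supplies $\mf c=\lambda$ together with the $\circledast$-slices. Verifying that the matrix is genuinely CCC, that the Cohen part never inadvertently produces a \emph{small} splitting family, and that the reflection of small families goes through uniformly for all the required $\kappa$ and $\mu$ simultaneously, is where the real work lies.
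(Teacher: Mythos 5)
Your overall architecture --- Cohen-generic slices supplying the $\circledast_{\kappa,\mu}$-sequences via Lemma \ref{lm:cohen}, a $\sigma$-centered bookkeeping device supplying unsplit reals for $\mathfrak{s}\ge\nu$, and Proposition \ref{*} capping $\mathfrak{s}\le\nu$ --- is the same as the paper's, but there is a genuine gap at the heart of your verification of condition (B) of Definition \ref{df:star}: the reals born \emph{after} the slices. In your interleaved length-$\lambda$ iteration the slices of a fixed $\circledast_{\kappa,\mu}$-sequence occupy only $\kappa$ stages, so whenever $cf(\lambda)>\kappa$ --- already in the basic case $\kappa=\nu<\lambda$ with $\lambda$ regular --- all of them are added before some stage $\xi^{*}<\lambda$. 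Every real that appears only after $\xi^{*}$ (the later Mathias--Prikry generics, later Cohen slices for other cardinals, reals born at limit stages of countable cofinality) lies in no model over which any tail of these slices is Cohen-generic, so for exactly these $S$ your appeal to Lemma \ref{lm:cohen} is vacuous; what you would need instead is a preservation theorem showing that Mathias--Prikry steps and limits keep the already-completed slice sequence dyadic on every new real, and your sketch contains nothing of the sort. (Your stated reason for interleaving is also a misconception: adding the whole Cohen block first does not permanently pin $\mathfrak{s}$ at $\omega_1$, because a splitting family of size $<\nu$, whenever it was created, is destroyed by $\sigma$-centered forcing done \emph{afterwards}.)

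The idea you are missing, and which the paper uses instead of any scheduling, is commutation. The paper takes $P=Fn(\lambda,2)*\dot Q$ with \emph{all} the Cohen forcing up front and $\dot Q$ the standard finite-support iteration of $\sigma$-centered posets of size $<\nu$ forcing $\mathfrak{p}=\nu$, so that $\mathfrak{s}\ge\mathfrak{p}=\nu$ comes for free. For fixed $\kappa,\mu$ it writes $Fn(\lambda,2)\cong Fn(\lambda,2)\times Fn(\kappa\times\kappa\times\mu,2)$; given a countable $S$ in the final model, CCC-ness yields (in the paper's notation) a regular suborder $Q'\lessdot Q$ with $|Q'|<\kappa$ and $S\in W[\mathcal G][\mathcal H\cap Q']$, and regularity of $\kappa$ puts $Q'$ into $W[\mathcal G_\alpha]$ for a bounded piece $\mathcal G_\alpha$ of the slice product. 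The product lemma then rearranges $W[\mathcal G][\mathcal H\cap Q']=W[\mathcal G_\alpha][\mathcal H\cap Q'][\mathcal G^{\alpha}]$: the tail slices $\mathcal G^{\alpha}$ \emph{are} Cohen-generic over a model containing $S$, even though chronologically they were added before $\mathcal H$, and Lemma \ref{lm:cohen} plus upward absoluteness of dyadicity finishes (B). This single rearrangement works uniformly in $\kappa$ and $\mu$, and no matrix iteration is needed. Finally, your unease about the case $cf(\lambda)<\nu$ is justified, but a matrix iteration is not the cure: ZFC proves $\mathfrak{p}\le cf(\mathfrak{c})$ (for $\theta<\mathfrak{p}$ one has $2^{\theta}=\mathfrak{c}$, so K\"onig's theorem gives $cf(\mathfrak{c})>\theta$), hence the intermediate goal $\mathfrak{p}=\nu$ together with $\mathfrak{c}=\lambda$ is outright unattainable there; this case is in fact a problem for the paper's own proof as written (note that the abstract states the result for regular $\lambda$), and your weaker goal of $\mathfrak{s}=\nu$ with $\mathfrak{c}=\lambda$ would require the consistency of $\mathfrak{s}>cf(\mathfrak{c})$, which neither your sketch nor the paper establishes.
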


\begin{proof}
Let $P=Fn({\lambda},2)*\dot{Q}$, where $\dot{Q}$ is a name in $V^{Fn({\lambda},2)}$ for
the standard finite support iteration
\begin{align*}
 \<Q_{\zeta}:{\zeta}\le {\lambda}, R_{\xi}:{\xi}<{\lambda} \>
\end{align*}
that forces $\mathfrak{p} = \nu$ in such a way that
each $R_{\xi}$ is a CCC, even $\sigma$-centered, poset of size $<{\nu}$
in $V^{Fn({\lambda},2)*Q_{\xi}}$.
So in $V^P$ we have $\mathfrak{c}={\lambda}$ and
$\mf p = {\nu}$.

\medskip

Let us now fix ${\kappa}$ and $\mu$ as indicated and check that $\circledast_{{\kappa},{\mu}}$ holds.
This will imply $\mf s={\nu}$ because on one hand $\nu =\mathfrak{p} \le \mathfrak{s}$,
while, by proposition \ref{*}, $\circledast_{{\nu},{\mu}}$ implies $\mathfrak{s} \le \nu$.

Because of $\kappa \cdot \mu \le \lambda$ the forcings
$Fn({\lambda},2)\times Fn({\kappa}\times {\kappa}\times {\mu},2)$ and  $Fn({\lambda},2)$
are equivalent, hence we may assume that actually
\begin{align*}
 P=Fn({\lambda},2)\times Fn({\kappa}\times {\kappa}\times {\mu},2)*\dot{Q}\,,
\end{align*}
and from now on we will work in the intermediate model
\begin{align*}
 W=V^{Fn({\lambda},2)}.
\end{align*}
Now, let $\mc G$ be  $Fn({\kappa}\times {\kappa}\times {\mu},2)$-generic
over $W$ and $\mc H$ be  $Q$-generic over $W[\mc G]$. Our result then follows from
the following claim.

\begin{claim}
The sequence
\begin{align*}
\mc A^\mc G_0=\<\<A^0_{\alpha}, A^1_{\alpha}\>:{\alpha}<{\kappa}\>\,,
\end{align*}
defined  in $W[\mc G]$ following \ref{def:Cohen} with the choice $\alpha = 0$,
is a  $\circledast_{{\kappa},{\mu}}$-sequence in the final generic extension
$W[\mc G][\mc H]$.
\end{claim}

\begin{proof}[Proof of the claim]
Let us fix $S\in \br {\kappa}\times {\mu};{\omega};$ in $W[\mc G][\mc H]$.
It is easy to see that there is
a regular suborder $Q'\lessdot  Q$ such that $|Q'|<{\kappa}$ and
\begin{align*}
S\in W[\mc G][\mc H']
\end{align*}
where $\mc H' = \mc H \cap Q'$.
Since ${\kappa}$ is  regular, then there is ${\alpha}<{\kappa}$ such that
$Q'\in W[\mc G_{\alpha}]$, where $\mc G_{\alpha} = \mc G \cap Fn({\alpha}\times {\kappa}\times {\mu},2)$.
But $W[\mc G_{\alpha}]$ contains both $Q'$
and $Fn(({\kappa}\setm {\alpha})\times {\kappa}\times {\mu},2)$,
hence, putting $$\mc G^\alpha = \mc G \cap Fn(({\kappa}\setm {\alpha})\times {\kappa}\times {\mu},2)\,,$$ we have
\begin{align}\label{eq:change}
W[\mc G][\mc H'] =
W[\mc G_{\alpha}][\mc G^{\alpha}][\mc H']=
W[\mc G_{\alpha}][\mc H'][\mc G^{\alpha}].
\end{align}

By lemma \ref{lm:cohen} then we have
\begin{align*}
W[\mc G][\mc H'] = W[\mc G_{\alpha}][\mc H'][\mc G^{\alpha}]\models
\text{``$\mc A^{\mc G^{\alpha}}_\alpha$ is dyadic on $S$''},
\end{align*}
while $\mc A^{\mc G^{\alpha}}_\alpha$ is clearly just the final segment of $\mc A^\mc G_0$
starting at $\alpha$. But then by $W[\mc G][\mc H'] \subs W[\mc G][\mc H]$ we also have

\begin{align*}
W[\mc G][\mc H] \models
\text{``$\mc A^{\mc G^{\alpha}}_\alpha$ is dyadic on $S$''},
\end{align*}
hence $\mc A^\mc G_0$ is indeed a  $\circledast_{{\kappa},{\mu}}$-sequence in $W[\mc G][\mc H]$.
\end{proof}

This completes the proof of our theorem.
\end{proof}

We have seen in theorem \ref{tm:star} that if $\mu < \kappa$ 
then our construction from $\circledast_{{\kappa},{\mu}}$ yields a space 
$X$ of size $\kappa$ with the very strong SAU property that every
infinite closed set $F \in \clp(X)$ is ``co-small'', i.e. $|X \setm F| < |X| = \kappa$.
Our following result shows that this strong property cannot be pushed any further.

\begin{theorem}\label{tm:nospace}
If $X$ is any space then
\begin{align*}
  |X|\le \sup \{|X\setm F|^+: F\in F\in \clp(X)\}.
\end{align*}
\end{theorem}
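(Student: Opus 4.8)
The plan is to argue by contradiction. Write $\kappa=|X|$ and $\lambda=\sup\{|X\setm F|^+:F\in\clp(X)\}$, and suppose $\lambda<\kappa$. First I would reformulate this hypothesis in open-set language: since $F\mapsto X\setm F$ is a bijection between the infinite closed sets and the open sets with infinite complement, the assumption $\lambda<\kappa$ says exactly that \emph{every open $U\subs X$ with $X\setm U$ infinite satisfies $|U|<\lambda$}. I will call this the \emph{smallness hypothesis}. Before using it I would record one general fact, valid in every infinite Hausdorff space: there is an open set $G$ with $G$ and $X\setm G$ both infinite. This follows by taking a countably infinite discrete subspace $\{d_n:n<\omega\}$ with neighbourhoods $W_n$ satisfying $W_n\cap\{d_k:k<\omega\}=\{d_n\}$ and letting $G=\bigcup\{W_n:n\text{ even}\}$: then $G$ contains all even-indexed $d_n$ while every odd-indexed one stays outside. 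Consequently $\lambda\ge\omega_1$, so in the contradiction $\lambda$ is an \emph{infinite} cardinal, which is what makes the cardinal arithmetic below work.

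Next I would show that, under the smallness hypothesis, every point has an open neighbourhood with infinite complement. Suppose some $x$ had only cofinite neighbourhoods. Given any $y\ne x$, Hausdorffness provides disjoint open $U\ni x$ and $V\ni y$; as $U$ is cofinite, $V\subs X\setm U$ is finite, and a finite open set in a Hausdorff space consists of isolated points, so $y$ is isolated. Hence every point other than $x$ is isolated, and $x$ itself is not (an isolated point has the co-infinite neighbourhood $\{x\}$). Thus the isolated points form a set $I$ with $|I|=\kappa$; writing $I=J\cup K$ with $J\cap K=\empt$, $|J|=\kappa$, $|K|=\omega$ gives an open set $J$ (a union of isolated singletons) whose complement $\{x\}\cup K$ is infinite, so $J$ is a co-infinite open set of size $\kappa\ge\lambda$ --- contradicting smallness. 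Therefore for each $x$ I may fix a co-infinite open neighbourhood $O_x$, and by smallness $|O_x|<\lambda$.

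The heart of the argument is then a greedy recursion producing one forbidden open set. I would choose points $x_\xi$ for $\xi<\lambda^+$ by picking $x_\xi\in X\setm\bigcup_{\eta<\xi}O_{x_\eta}$; this set is non-empty at every stage $\xi<\lambda^+$ because $|\bigcup_{\eta<\xi}O_{x_\eta}|\le|\xi|\cdot\lambda=\lambda<\kappa=|X|$. Setting $U=\bigcup_{\xi<\lambda}O_{x_\xi}$, the $\lambda$ points $x_\xi$ ($\xi<\lambda$) are pairwise distinct and lie in $U$, so $|U|\ge\lambda$; on the other hand every $x_\zeta$ with $\lambda\le\zeta<\lambda^+$ was chosen outside $\bigcup_{\eta<\zeta}O_{x_\eta}\supseteq U$, so these infinitely many points witness that $X\setm U$ is infinite. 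Thus $U$ is a co-infinite open set with $|U|\ge\lambda$, contradicting the smallness hypothesis and establishing $\kappa\le\lambda$. Note that only $\lambda^+\le\kappa$ is used, so the argument is uniform in whether $\kappa$ is regular or singular.

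I expect the main obstacle to be precisely the tension that the greedy step resolves: it is easy to find large open sets and easy to find co-infinite open sets, but a priori taking unions to enlarge an open set destroys the infinitude of the complement. The right-separated choice of the $x_\xi$ handles this, since it keeps the reservoir $X\setm\bigcup_{\eta<\xi}O_{x_\eta}$ of full size $\kappa$ throughout and automatically deposits the tail points $x_\zeta$ (for $\zeta\ge\lambda$) into the complement of $U$. The only other point needing care is the degenerate configuration of a point all of whose neighbourhoods are cofinite, which is disposed of in the second paragraph.
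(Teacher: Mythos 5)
Your proof is correct, and at the decisive step it takes a genuinely different route from the paper's. Both arguments proceed by contradiction and share the same opening move: if the supremum $\lambda$ were $<|X|$, then every open set with infinite complement would have size $<\lambda$ (its complement being a member of $\clp(X)$), so every point should get a small co-infinite open neighbourhood. From there the paper invokes Hajnal's Set Mapping Theorem \cite{Ha} applied to $p\mapsto U(p)$: it yields a free set $Y$ of full size $|X|$, which is then split into $Z$ with $|Z|=|X|$ and an infinite remainder $Y\setm Z$, and the contradiction comes from the open set $U=\bigcup_{p\in Z}U(p)$, which has size $|X|$ but infinite complement. You replace this appeal to a nontrivial combinatorial theorem by an elementary transfinite recursion of length $\lambda^+$, choosing $x_\xi\notin\bigcup_{\eta<\xi}O_{x_\eta}$; this succeeds because you only need $\lambda$ many points inside the union and infinitely many outside it, rather than a free set of size $|X|$, and the bound $|\xi|\cdot\lambda=\lambda<|X|$ keeps the recursion alive. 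So your argument is self-contained where the paper's relies on a citation, while the paper's route produces a stronger intermediate object (a free set of full cardinality) that the conclusion does not actually require. Two further points in your favour: you explicitly dispose of the degenerate configuration of a point all of whose neighbourhoods are cofinite --- precisely the case in which the paper's bald assertion that every point has a neighbourhood of size $<\kappa$ needs a separate justification, which the paper omits --- and you verify that $\lambda$ is infinite (via an infinite discrete subspace), which the cardinal arithmetic $|\xi|\cdot\lambda=\lambda$ genuinely requires.
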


\begin{proof}
Assume, on the contrary,  that
\begin{align}\label{eq:nospace}
{\kappa}=\sup \{|X\setm F|^+: F\in F\in \clp(X)\}<  |X|.
\end{align}
Then every point $p\in X$ has an open neighborhood $U(p)$ with $|U(p)|<{\kappa}$.
But then, by Hajnal's Set Mapping
Theorem from \cite{Ha}, there is a set $Y\in \br X;|X|;$
such that $q\notin U(p)$ for any distinct $p,q \in Y$.
Fix any $Z\subs Y$ such  that $|Z|=|Y|=|X|$ and $Y \setm Z$ is infinite.
Then for $U=\bigcup_{p\in Z}U(p)$ we have $|U| = |X|$ while $X \setm U \in \clp(X)$,
contradicting (\ref{eq:nospace}).
\end{proof}

Our results below show that, consistently, the existence of a SAU space of a given size
does not imply the existence of a space of the same size in which
every infinite closed set is ``co-small''.

\begin{theorem}\label{tm:rc_not_star}
Assume that $\mf r = \mf c={\omega}_2$ and $\clubsuit$ holds.
Then SAU spaces of cardinality $\mf c$ exist but
in every space $X$ of cardinality $\mf c$ there is an infinite closed
set $F$ such that $|X \setm F| = \mf c$.
\end{theorem}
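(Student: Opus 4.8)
The statement has two independent halves, and I would treat them separately. The \emph{existence} half is immediate from the hypothesis $\mf r=\mf c$: I would simply invoke Theorem \ref{tm:sAU-lc-cont}, which under exactly the assumption $\mf r=\mf c$ produces a (locally countable, separable, crowded) SAU space of cardinality $\mf c$. No use of $\clubsuit$ is needed here, so the whole weight of the argument falls on the second half.

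For the \emph{nonexistence} half I want to show that every Hausdorff space $X$ with $|X|=\mf c=\omega_2$ has an infinite closed $F$ with $|X\setm F|=\mf c$, and I would argue by contradiction, assuming property $(\ast)$: every $F\in\clp(X)$ satisfies $|X\setm F|\le\omega_1$. First I would record the consequences of $(\ast)$. For any $A\in\br X;\omega;$ the set $\overline A$ is infinite closed, hence co-small, so $|\overline A|=\omega_2$. Any two members of $\clp(X)$ are co-small and therefore meet, since their complements cannot cover $X$; and $(\ast)$ forces at least two non-isolated points (otherwise a countable set of isolated points would have at most one accumulation point, giving a countable, non-co-small closure). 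Thus $X$ is itself SAU and Theorem \ref{tm:sAU-basic} applies: $X$ is countably compact, every compact subset is finite (so $X$ has no nontrivial convergent sequence), and every infinite closed set is of size $\omega_2$. In particular there is \emph{no} infinite closed set of size $<\omega_2$, so the target $F$ cannot be exhibited directly; the contradiction must instead be extracted from $\clubsuit$.

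The plan is to manufacture, using $\clubsuit$ together with $\mf r=\mf c$, a countable set whose closure is \emph{not} co-small (contradicting $(\ast)$), or a countable set with a single accumulation point, i.e.\ a convergent sequence (contradicting Theorem \ref{tm:sAU-basic}(2)). Concretely I would fix a countable discrete $D=\{d_n:n<\omega\}\subs X$, identify $D$ with $\omega$, and note that by countable compactness and $(\ast)$ the remainder $\overline D\setm D$ has size $\omega_2$; each $p\in\overline D$ is the $u_p$-limit of $(d_n)$ for some ultrafilter $u_p$ on $\omega$, giving a map from these $\omega_2$ points into $\beta\omega$. I would then pick an injective $\omega_1$-sequence of such points and feed the associated traces into a $\clubsuit$-guessing scheme: using a $\clubsuit$-sequence to capture an uncountable subfamily of traces and the reaping property $\mf r=\mf c$ to split the at most $\omega_1$ relevant trace sets simultaneously, the aim is to produce a single $E\in\br D;\omega;$ that eventually escapes the neighbourhoods coded by $\omega_2$-many of these points, so that $|X\setm\overline E|=\omega_2$. (If instead $\clubsuit$ forces the guessed subsequence to have a unique accumulation point, Theorem \ref{tm:sAU-basic}(2) is contradicted.) Lemma \ref{lm:small-w} yields the same conclusion whenever $w(X)<\mf s$, so one may assume $w(X)\ge\mf s$ throughout.

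The main obstacle is precisely a mismatch of cardinalities: $\clubsuit$ is a guessing principle on $\omega_1$, while the exceptional set one must control has size $\omega_2$. The delicate point is to guarantee that the countable $E$ extracted from the $\clubsuit$-guess fails co-smallness at $\omega_2$-many points and not merely at $\omega_1$-many; this is where the full strength of $\mf r=\mf c$—reaping every family of size $<\mf c$—must be combined with $\clubsuit$, presumably through an elementary-submodel bookkeeping that reflects the relevant $\omega_2$-structure of $X$ onto an $\omega_1$-sized approximation on which $\clubsuit$ can act. Making this reflection interact correctly with the ultrafilter traces $u_p$, so that reaping on $\omega$ translates into the omission of genuinely many points of $X$, is the crux and the step I expect to demand the most care.
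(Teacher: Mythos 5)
Your first half is exactly the paper's argument: under $\mf r=\mf c$, Theorem \ref{tm:sAU-lc-cont} gives the SAU space of size $\mf c$, and $\clubsuit$ plays no role there. The problem is the second half, where you have a genuine gap that you yourself flag: the whole scheme of ultrafilter traces, simultaneous reaping, and elementary-submodel reflection is only a plan, and its crucial step --- extracting from a guessing principle on $\omega_1$ a countable set whose closure omits $\omega_2$-many points --- is never carried out. Worse, the plan aims in the wrong direction: the paper's proof of this half uses neither $\mf r=\mf c$ nor any SAU machinery (your observation that the contradiction hypothesis $(\ast)$ makes $X$ SAU is true but goes unused), and the cardinality mismatch you identify as the crux simply does not arise in the correct argument.

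The missing idea is to put the $\clubsuit$-sequence \emph{inside} the space. Identify the underlying set of $X$ with $\omega_2$, so that $\omega_1\subs X$ and each guessed set $T_\alpha\subs\alpha$, for $\alpha\in Lim(\omega_1)$, is a countable subset of $X$. Assuming $(\ast)$, i.e. $|X\setm F|\le\omega_1$ for every $F\in\clp(X)$, each $\overline{T_\alpha}$ is co-small, so the intersection $F=\bigcap_{\alpha\in Lim(\omega_1)}\overline{T_\alpha}$ of these $\omega_1$-many co-small closed sets still has complement of size $\le\omega_1$. In particular $F\setm\omega_1$ contains two points $x,y$, which have disjoint open neighbourhoods $U,V$. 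Since $x\in\overline{T_\alpha}\setm T_\alpha$ for every limit $\alpha$, each $U\cap T_\alpha$ is infinite; as $T_\alpha$ has order type $\omega$ and is cofinal in $\alpha$, this makes $U\cap\omega_1$ unbounded, hence uncountable. Now $\clubsuit$ applied to $U\cap\omega_1$ yields some $\alpha$ with $T_\alpha\subs U$, whence $V\cap T_\alpha=\empt$, contradicting $y\in F\subs\overline{T_\alpha}$. Note how your worry dissolves: one never needs a single countable set to ``escape'' $\omega_2$-many points; one only needs $(\ast)$ applied $\omega_1$-many times to produce two Hausdorff-separated points lying in \emph{all} the closures $\overline{T_\alpha}$, after which one guess kills one of them.
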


\begin{proof}
By theorem \ref{tm:sAU-lc-cont} $\mf r=\mf c$ implies that
SAU spaces of cardinality $\mf c$ exist .

Now assume that $X=\<{\omega}_2, \tau\>$ is a space
such that $|{\omega}_2\setm F|\le {\omega}_1$ for all  $F \in \clp(X)$ and
fix  a $\clubsuit$-sequence
$\<T_{\alpha}:{\alpha} \in Lim({\omega}_1)\>$.
Then
\begin{align*}
 F=\bigcap\nolimits_{{\alpha}\in Lim({\omega}_1)} \overline {T_{\alpha}}
\end{align*}
contains a final segment of ${\omega}_2$. So we may
pick two points $x,y \in F \setm \omega_1$ with disjoint open neighborhoods
$U$ and $V$, respectively. Clearly,
then $U\cap T_{\alpha}$ is infinite for all ${\alpha} \in Lim({\omega}_1)$, consequently
$|U\cap {\omega}_1|={\omega}_1$.
But $\<T_{\alpha}:{\alpha} \in Lim({\omega}_1)\>$ is a $\clubsuit$-sequence, hence
there is ${\alpha} \in Lim(\omega_1)$ with $T_{\alpha}\subs U$.
Thus we get $V\cap T_{\alpha}=\empt$,
which contradicts $y\in F \subs \overline{T_\alpha}$.
\end{proof}

The consistency of the assumptions of theorem \ref{tm:rc_not_star}
follows from a result that had been
proved by the first author back in 1983 but has never been published. So
we decided to include it here. For that we need some preparation.

For any cardinal ${\mu}$ we shall write
\begin{align*}
 S^{\omega}_{\mu}=\{{\alpha}<{\mu}: cf({\alpha})={\omega}\}.
\end{align*}

We also need the following definition.

\begin{definition}
For any given set  $X$ we define the forcing notion
$\Pjuh X=\<\pjuh X,\le \>$ as follows:
\begin{align*}
\pjuh X=\{f\in &Fn(X\times {\omega},\,2\, ;\,{\omega}_1) :\\&
\dom(f)=A\times n \text{ for some }
A\in \br X;{\le \omega}; \text{ and }
n\in {\omega}\}.
\end{align*}
For $p,q\in \pjuh X$ we let $p\le q$ iff $p\supset q$.
\end{definition}

We now present some properties of this forcing.

\begin{theorem}\label{tm:j1}
Let ${\kappa}$ be any infinite cardinal in our ground model $V$.
\begin{enumerate}[(1)]
\item $\Pjuh \kappa$ is $\mathfrak{c}^+$-CC; in fact, for any $\{p_{\alpha}:{\alpha}<\mathfrak{c}^+\}\subs \pjuh{{\kappa}}$  there is
$I\in \br {\mathfrak{c}^+};{\mathfrak{c}^+};$ such that $\,\bigcup_{{\alpha}\in K}p_{\alpha}\in \pjuh{{\kappa}}$ whenever $K\in \br I;{\omega};$.
Consequently, the forcing $\Pjuh \kappa$ preserves all cardinals $> \mathfrak{c}$.
\smallskip
 \item $\mathfrak{c}$ becomes countable in $V^{\Pjuh{{\kappa}}}$, hence $(\mathfrak{c}^+)^V = (\omega_1)^{V^{\Pjuh{\kappa}}}$.
\smallskip 
\item If
$\clubsuit(S^{\omega}_{\mathfrak{c}^+})$ holds in $V$  then
$\clubsuit$ holds in $V^{\Pjuh{\kappa}}$ .
\smallskip
\item If ${\kappa}={\kappa}^\mathfrak{c}$ then  $\mathfrak{c}^{V^{\Pjuh{\kappa}}}={\kappa}$ and
$\,V^{\Pjuh{\kappa}}\models \, \MAC\,$.

\end{enumerate}
 \end{theorem}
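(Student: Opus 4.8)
The plan is to treat the four clauses in order, reading a condition $p\in J_\kappa$ as a triple consisting of its width $A_p$ (a countable subset of $\kappa$), its common height $n_p\in\omega$, and the assignment $x\mapsto p(x,\cdot)\in 2^{n_p}$ of a binary \emph{column} of length $n_p$ to each $x\in A_p$; the ordering is end-extension in height together with enlargement of width. For (1) I would start from an arbitrary $\{p_\alpha:\alpha<\mathfrak c^+\}$ and thin it three times: first by pigeonhole to a set on which $n_{p_\alpha}$ is a fixed $n$; then, using that $\mu^\omega\le\mathfrak c<\mathfrak c^+$ for every $\mu\le\mathfrak c$, apply the $\Delta$-system lemma to the countable sets $A_{p_\alpha}$ to get a subfamily of size $\mathfrak c^+$ whose widths form a $\Delta$-system with countable root $R$; finally, since $|2^{R\times n}|\le\mathfrak c$, thin once more so that all the $p_\alpha$ agree on $R\times n$. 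For the resulting index set $I$ and any $K\in[I]^{\omega}$ the union $\bigcup_{\alpha\in K}p_\alpha$ has domain $\bigl(\bigcup_{\alpha\in K}A_{p_\alpha}\bigr)\times n$, which is again of the form (countable)$\times n$, and the conditions agree on their pairwise intersections $R\times n$, so the union is a genuine element of $J_\kappa$. This is exactly the strong form in (1); $\mathfrak c^+$-CC and preservation of cardinals $>\mathfrak c$ are immediate consequences.

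The heart of the matter is (2), and here the key observation is that although one cannot pin down a whole \emph{column} of the generic, one can set an entire \emph{row} on any prescribed countable set of columns. Writing $g=\bigcup\mathcal G$ and, for $x<\omega$ and $n<\omega$, $\hat c_n(x)=g(x,n)$, I claim that for every $r\in(2^\omega)^V$ the set $D_r=\{q:\omega\subseteq A_q\text{ and }\exists\,n<n_q\ \forall x<\omega\ q(x,n)=r(x)\}$ is dense: given $p$, first enlarge its width to $A_p\cup\omega$ (still countable!), then raise the height by one level, setting the new bit of column $x$ equal to $r(x)$ for all $x<\omega$ and arbitrarily on the remaining columns. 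The resulting $q$ lies in $D_r$. Hence in $V[\mathcal G]$ we have $(2^\omega)^V\subseteq\{\hat c_n:n<\omega\}$, so $(2^\omega)^V$, and therefore $\mathfrak c^V$, is countable. Combined with (1), which keeps $(\mathfrak c^+)^V$ a cardinal, this yields $(\mathfrak c^+)^V=\omega_1^{V[\mathcal G]}$.

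For (3) note first that, by (2), every ordinal below $(\mathfrak c^+)^V=\omega_1^{V[\mathcal G]}$ is countable, hence has cofinality $\omega$, in $V[\mathcal G]$, so a ground-model $\clubsuit(S^\omega_{\mathfrak c^+})$-sequence $\langle T_\alpha\rangle$ is a legitimate candidate $\clubsuit$-sequence there. Given a name $\dot X$ and $p\Vdash\dot X\in[\omega_1]^{\omega_1}$, choose in $V$ conditions $p_\gamma\le p$ and strictly increasing ordinals $\alpha_\gamma$ with $p_\gamma\Vdash\alpha_\gamma\in\dot X$ for $\gamma<\mathfrak c^+$, and apply the strong form of (1) to $\{p_\gamma\}$ to extract $I\in[\mathfrak c^+]^{\mathfrak c^+}$. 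Then $Y=\{\alpha_\gamma:\gamma\in I\}$ has size $\mathfrak c^+$, so $\clubsuit(S^\omega_{\mathfrak c^+})$ provides $\beta$ with $T_\beta\subseteq Y$; as $T_\beta$ is countable, the corresponding countably many $p_\gamma$ have, by (1), a common extension $q=\bigcup p_\gamma\le p$ with $q\Vdash T_\beta\subseteq\dot X$. Since $p$ and $\dot X$ were arbitrary, the sequence guesses every uncountable subset of $\omega_1^{V[\mathcal G]}$.

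Finally (4). A count of conditions gives $|J_\kappa|=\kappa$ (using $\kappa^\omega=\kappa$ and that each width carries $\le\mathfrak c$ assignments), and the $\mathfrak c^+$-CC nice-name bound together with $\kappa^{\mathfrak c}=\kappa$ shows there are at most $\kappa$ reals in $V[\mathcal G]$; the $\kappa$ pairwise distinct column reals $g_x$ give at least $\kappa$, so $\mathfrak c^{V[\mathcal G]}=\kappa$. For $\MAC$ I would use the equivalence $\MAC\Leftrightarrow\operatorname{cov}(\mathcal M)=\mathfrak c$: given fewer than $\kappa$ meager sets in $V[\mathcal G]$, a name-reflection argument (each real name has support of size $\le\mathfrak c$, and $\kappa>\mathfrak c$ is regular) places them all in $V[\mathcal G\cap J_B]$ for some $B\in[\kappa]^{<\kappa}$, and then a column real $g_{x^*}$ with $x^*\in\kappa\setminus B$ is Cohen-generic over $V[\mathcal G\cap J_B]$ and so avoids all of them. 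The main obstacle here is exactly that the common height couples all columns: one must verify that restriction to $B$ is a projection making $J_B$ a regular suborder, and that the quotient genuinely adds a Cohen real in coordinate $x^*$ even though its height is dictated by the rest of the generic. I expect this coupling, rather than any of the counting, to be the delicate point of the whole theorem.
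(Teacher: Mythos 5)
Your proposal follows the paper's own proof essentially step by step: the fixed\mbox{-}height $\Delta$-system-plus-counting argument for (1), the density of the row-copying sets $D_r$ for (2), the combination of the strong form of (1) with ground-model guessing along an injective enumeration for (3), and nice names plus the pairwise distinct column reals for the computation of $\mathfrak c^{V^{\Pjuh{\kappa}}}$ in (4). For $\MAC$ the paper works directly with a family of dense subsets of $Fn(\omega,2)$ rather than through $\operatorname{cov}(\mathcal M)=\mathfrak c$, but that difference is cosmetic.

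The one step you leave unverified---and predict to be the delicate point of the whole theorem---is in fact routine, and it is exactly what the paper asserts without proof, namely $\Pjuh{\kappa}\approx \Pjuh{\kappa\setminus\{\alpha\}}\times Fn(\omega,2)$. The height-coupling is undone by padding. Map a condition $p$ with $\dom(p)=A\times n$ to the pair $\bigl(p\upharpoonright((A\setminus\{\alpha\})\times n),\, p\upharpoonright((A\cap\{\alpha\})\times n)\bigr)$. Two conditions of $\pjuh{\kappa}$ are compatible iff they agree on the intersection of their domains (a common extension can always be completed arbitrarily on the rest of a rectangular domain), so this map preserves order, compatibility and incompatibility; and its image is dense in the product, because any pair $(q,s)$ can be extended so that the height of $q$ equals the length of $s$, with the new values chosen arbitrarily, and such a pair is the image of a single condition with domain $(A_q\cup\{\alpha\})\times m$. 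Hence the two posets are forcing equivalent, and the same padding argument gives $\Pjuh{\kappa}\approx\Pjuh{B}\times\Pjuh{\kappa\setminus B}$ for every $B\subseteq\kappa$, which is all your name-reflection step needs: since $\Pjuh{\{x^*\}}$ is literally the poset of finite binary sequences, the column at $x^*$ is Cohen over $V[\mathcal G\cap\pjuh{\kappa\setminus\{x^*\}}]\supseteq V[\mathcal G\cap\pjuh{B}]$ and so avoids every meager set coded in the smaller model. Two minor loose ends: in (3) the ground-model sequence is indexed only by $S^{\omega}_{\mathfrak c^+}$, so you must still assign arbitrary cofinal $\omega$-sequences to the countable limit ordinals outside that set (the paper notes this explicitly); and in (4) you do not need, and cannot assume, that $\kappa$ is regular---what your reflection argument uses is only that a union of $\lambda<\kappa$ supports of size $\le\mathfrak c$ has size $\lambda\cdot\mathfrak c<\kappa$, which follows from $\kappa=\kappa^{\mathfrak c}$ (this also gives $cf(\kappa)>\mathfrak c$ by K\H{o}nig's theorem).
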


\begin{proof}
(1) Assume that $\dom(p_{\alpha})=A_{\alpha}\times n_{\alpha}$ for
${\alpha}< \mathfrak{c}^+.$ Clearly we can find $H \in \br {\mathfrak{c}^+};{\mathfrak{c}^+};$
and $n\in {\omega}$  such that $n_{\alpha}=n$ for all ${\alpha}\in H$. A simple $\Delta$-system
and counting argument then yields $I\in \br H;\mathfrak{c}^+;$ such that  the functions
$\{p_{\alpha}:{\alpha}\in I\}$
are pairwise compatible.
It is obvious then that $I$ is as required.

\smallskip
(2)  Let $\mc G$ be  $\Pjuh {\kappa}$-generic over $V$, then
$g=\bigcup \mc G : {\kappa}\times {\omega}\to 2$. For each $n \in \omega\,$
we define the function $d_n \in {}^{\omega}2\,$ by putting for all $i < n$
\begin{align*}
 d_n(i)=g(i,n).
\end{align*}
It is straight forward to check that if
$r:{\omega}\to 2$ is in the ground model then 
\begin{align*}
 D_r=\{p\in \pjuh{{\kappa}} : \exists n\in {\omega}\, \forall i\in {\omega} \, [r(i)= p(i,n)]\}.
\end{align*}
is  dense in $\Pjuh{{\kappa}}$, consequently we have
\begin{align*}
 V^{\Pjuh {\kappa}}\models \{d_n:n<{\omega}\} \supset {}^{\omega}2\cap V.
\end{align*}
But this clearly implies that $\mathfrak{c}$ becomes countable in $V^{\Pjuh{{\kappa}}}$.
Then $(\mathfrak{c}^+)^V = (\omega_1)^{V^{\Pjuh{\kappa}}}$ follows because $(\mathfrak{c}^+)^V$
remains a cardinal by (1).

\smallskip
(3)  To aid readability, we write $\mu = \mathfrak{c}^+$ and $S = S^{\omega}_{\mu}$. 
Then we fix a $\clubsuit(S)$-sequence 
$\<A_{\zeta}:{\zeta} \in S \>$ in $V$. By (2) we have  $S \subset Lim(\omega_1)$
in the generic extension $V^{\Pjuh{\kappa}}$.

Let us assume now that $p \Vdash \dot X \in [\mu]^\mu$ for a condition $p$ in ${\Pjuh{\kappa}}$.
We can then define in $V$ a strictly increasing map $\varphi : S \to \mu$ and for each $\zeta \in S$
a condition $p_\zeta \le p$ such that $p_\zeta \Vdash \varphi(\zeta) \in \dot X$.
Applying (1)  we can find
$I\in \br {\mu};{\mu};$ such that 
$\,p_K = \bigcup_{{\zeta}\in K}p_{\zeta}\in \pjuh{{\kappa}}$ holds whenever $K\in \br I;{\omega};$.

Now, $\varphi[I] = \{\varphi(\zeta) : \zeta \in I\} \in [\mu]^\mu$, hence there is some
$\eta \in S$ such that $A_\eta \subset \phi[I]$. But then for
$K= \phi^{-1}[A_{\eta}] \in [I]^\omega$
we have $p_K \in \pjuh{{\kappa}}$ and $p_K \le p$, moreover 
we clearly have $\,p_K \Vdash A_{\eta}\subs \dot X$. Thus, no matter how we define
$A_\zeta$ for $\zeta \in Lim(\omega_1) \setm S$, the sequence $\<A_{\zeta}:{\zeta} \in Lim(\omega_1) \>$ 
will be a $\clubsuit$-sequence in the generic extension $V^{\Pjuh{\kappa}}$.
\smallskip

(4)  For each ${\alpha}<{\kappa}$ we define the real $\,q_{\alpha}\in {}^{\omega}2$
in  $V^{\Pjuh{\kappa}}$ by stipulating
$q_{\alpha}(n)=g({\alpha},n)$ for all $n \in \omega$.  Then, by genericity,
$\{q_{\alpha} : {\alpha}<{\kappa}\}$ are pairwise distinct, hence we have
$\mathfrak{c}^{  {V^{{\Pjuh{\kappa}}}}} \ge  {\kappa}$.
On the other hand, by (1) $\,\Pjuh {\kappa}$ satisfies the $\mathfrak{c}^+$-chain condition,
hence the standard calculation using nice names and the condition 
${\kappa}={\kappa}^\mathfrak{c}$ yield us that
$\mathfrak{c}^{  {V^{{\Pjuh{\kappa}}}}}\le  {\kappa}$.
Thus indeed $\mathfrak{c}^{  {V^{{\Pjuh{\kappa}}}}} = {\kappa}$.

Now suppose that $\mathfrak{c}^V \le {\lambda}<{\kappa}$ and $\mc D=\{D_{\alpha}:{\alpha}<{\lambda}\}$
is a family of dense subsets of $Fn({\omega},2)$ in  $V^{\Pjuh{\kappa}}$. 
Then there is $I\in \br {\kappa};{\lambda};$
such that $\mc D\in V^{\Pjuh{I}}$.
Pick any ${\alpha}\in {\kappa}\setm I$. Then, as $\mc D\in V^{{\Pjuh{{\kappa}\setm \{{\alpha}\}}}}$ and
$${\Pjuh{{\kappa}}}\approx{\Pjuh{{\kappa}\setm \{{\alpha}\}}} \times Fn({\omega},2), $$
$q_{\alpha}$ is generic over $\mc D$.
This clearly implies \MAC\ in the generic extension $V^{\Pjuh{\kappa}}$.
\end{proof}

In the constructible universe $L$ we have $\mathfrak{c} = \omega_1$, $(\omega_2)^{\omega_1} = \omega_2$,
moreover $\clubsuit(S^{\omega}_{\mathfrak{c}^+})$ holds. Also, it is well-known and
easy to prove that \MAC\ implies $\mathfrak{r} = \mathfrak{c}$. Consequently, it is an
immediate corollary of theorem \ref{tm:j1} that $L^{\Pjuh{\omega_2}}$ satisfies all the assumptions of
theorem \ref{tm:rc_not_star}.

\section{Problems}

In this section we formulate the most intriguing questions concerning SAU spaces
that are left open.

\begin{problem}
 Is there a SAU space in ZFC?
\end{problem}

\begin{problem}
Is it consistent that there is a SAU  space
of cardinality $>\mf c$? Is it consistent that there is a locally countable SAU  space
of cardinality $>\mf c$? 
\end{problem}

\begin{problem}
Does the existence of a SAU space imply the existence of a crowded SAU space?
\end{problem}

\end{document}